\documentclass{amsart}
\usepackage{amsmath}
\usepackage{amsthm}
\usepackage{amssymb}
\usepackage{color}
\usepackage{graphicx}
\usepackage{apptools}
\usepackage{chngcntr}
\usepackage{amsfonts}
\usepackage{tikz}
\usepackage{makecell}
\usepackage{hyperref}
\usepackage{enumitem}
\usepackage{float}
\usepackage{diagbox}
\usepackage{subcaption}
\usepackage{ifthen}
\usepackage{xcolor}
\usepackage{placeins}

\newtheorem{thm}{Theorem}

\newtheorem{lemma}[thm]{Lemma}

\newtheorem{defi}[thm]{Definition}
\newtheorem{prop}[thm]{Proposition}
\newtheorem{rk}[thm]{Remark}

\newtheorem{conj}[thm]{Conjecture}

\usetikzlibrary{arrows.meta, positioning}

\DeclareMathOperator{\diag}{Diag}
\DeclareMathOperator{\tr}{Tr}
\DeclareMathOperator{\var}{Var}
\DeclareMathOperator{\bernoulli}{Bernoulli}
\DeclareMathOperator{\poisson}{Poisson}
\DeclareMathOperator{\binomial}{Binomial}
\DeclareMathOperator{\nnz}{nnz}
\DeclareMathOperator{\rank}{rank}

\newcommand{\toninfty}{\;\xrightarrow[n\to\infty]{}\;}
\newcommand{\eps}{\varepsilon}

\begin{document}
\title[Limiting spectral distribution of the Watts--Strogatz random graph]{Limiting spectral distribution for the adjacency matrix of the Watts--Strogatz random graph}

\author{Gr\'egoire Meunier}
\address{École polytechnique\\91128 Palaiseau Cedex\\France}
\email{grgoir.meunier@gmail.com}

\author{Sean O'Rourke}
\address{Department of Mathematics\\ University of Colorado\\ Campus Box 395\\ Boulder, CO 80309-0395\\USA}
\email{sean.d.orourke@colorado.edu}
\thanks{S. O'Rourke has been partially supported by NSF CAREER grant DMS-2143142. }

\begin{abstract}
The Watts--Strogatz random graph model on $n$ vertices with parameters $K$ (a positive even integer) and $p \in [0, 1]$ is constructed in two steps.  First, one starts with a ring lattice on $n$ vertices, where each vertex is connected to its $K/2$ nearest neighbors on each side. Each edge in turn is then independently rewired with probability $p$ by replacing one endpoint with a uniformly chosen vertex not already adjacent to it.  
We study the empirical eigenvalue distribution of the adjacency matrix for this model, whose entries are highly dependent due to the rewiring construction.  
In the regime where both $K$ and $pK$ grow to infinity with the vertex size $n$, we show that, after appropriate scaling, the empirical eigenvalue distribution converges to the semicircle law.
The proof is based on a novel coupling argument that approximates the adjacency matrix by a sum of two independent random matrices, one a sparse Wigner matrix and the other a random band matrix.  
In the case where $K$ and $p$ remain fixed, we propose conjectural formulas for the first five moments of the limiting eigenvalue distribution. These conjectures are supported by a convergence result relating the Watts--Strogatz model to another random graph model, together with numerical simulations.
\end{abstract}

\maketitle

\section{Introduction and main results}

A small-world network is a graph in which most nodes are not neighbors but can be reached from any other node via a short path.  These networks are characterized by a high local clustering coefficient (i.e., how close a neighborhood of a node is to being complete) and small average path lengths (i.e., the average shortest path between two nodes chosen uniformly at random).
The film actor network and some neural networks appear to be empirical examples of small-world networks as shown by Watts and Strogatz \cite{watts1998collective}. In the same paper, they introduce a random graph model aimed at modeling small-world networks. This model is also the main topic of Watts's book \cite{watts1999smallworlds} and is currently referred to in the literature as the \emph{Watts--Strogatz graph model}. It is the main object of this paper.

The Watts--Strogatz random graph model on $n$ vertices with parameters $K$ (a positive even integer) and $p \in [0, 1]$ is constructed as follows. Start with a ring lattice of size $n$, in which each node is connected to $K$ neighbors ($K/2$ on each side). For each node, rewire independently with probability $p$ each of its $K/2$ right neighbors to another node chosen uniformly at random. This is made formal in Definition~\ref{def:wattsstrogatz} (see also Figure \ref{fig:graphdef}).

The Watts--Strogatz graph model has been studied in the computer science and physics literature; see, for example, \cite{barrat2000properties,barthelemy1999small,Mishra2021Multifractal,MR4455346,Newman2000MeanFieldSmallWorld,RisauGusman2004Properties} and references therein. 
In particular, Barrat and Weigt \cite{barrat2000properties} compute estimates of the asymptotic degree distribution, geodesic distance, and clustering coefficient. Several papers also numerically study the eigenvalues of the adjacency matrix of a Watts--Strogatz random graph. In particular, Risau-Gusman \cite{RisauGusman2004Properties} estimates the eigenvalue distribution in the case when $K$ is comparable with $n$, and Mishra, Raghav and Jalan \cite{MR4455346} numerically investigate the ratio of consecutive eigenvalue spacings for different graph models, including some results for the Watts--Strogatz model. Finally, a more recent paper by Mishra, Bandyopadhyay, and Jalan \cite{Mishra2021Multifractal} computes several characteristics of the eigenvectors without establishing any properties of the associated eigenvalue distribution.

However, to the best of our knowledge, the question of the exact asymptotic eigenvalue distribution of the adjacency matrix has not yet been answered. Indeed, the main contribution to the analysis of the Watts--Strogatz model in the mathematical literature comes from Alimohammadi, Isik, and Saberi \cite{Alimohammadi2025LocalLimitsSmallWorld} who show that the Watts--Strogatz random graph converges, in the sense of weak convergence of distributions on the set of rooted random graphs, to another random graph, called the full $K$-fuzz random graph, from which it is easier to deduce mathematical properties.

The only direct contribution to the study of the eigenvalues that the authors are aware of comes from Nakkirt \cite{nakkirt2020eigenvalues}, who computes the first three limiting moments of the eigenvalue distribution of the adjacency matrix of a random Watts--Strogatz graph when $p$ and $K$ are fixed. The third moment does not correspond to any known distribution, which highlights the uniqueness of the model. Indeed, the lack of independence between edges makes it hard to use any traditional method to compute the moments.

\subsection{Our contribution}

In this paper, we analytically establish the limiting empirical spectral distribution for the eigenvalues of the adjacency matrix in certain regimes and use numerical simulations to make conjectures for the other cases.  

In the regime where the product $pK$ grows to infinity but such that $K/n$ goes to $0$ as $n$ grows to infinity, our results show that the correlation between entries vanishes, and we recover the same limiting empirical eigenvalue distribution as a Wigner random matrix (i.e., a symmetric random matrix with independent entries, up to the symmetry constraint). As one could expect, the resulting eigenvalue distribution, when scaled appropriately, converges to the Wigner semicircle law (see Theorem \ref{thm:mainresult}). This is illustrated in Figure~\ref{fig:simulationdensity} with two simulations with the same values of $p$  and $n$ and different values of $K$, averaged over $10$ runs.

The Wigner semicircle law is the probability measure, denoted $\mu_{\mathrm{SC}}$, with the following density with respect to the Lebesgue measure on $\mathbb{R}$: 
\begin{equation}\label{eq:semicircledensity}
    \rho_{\mathrm{SC}}(x) =
    \begin{cases}
        \frac{1}{2\pi}\sqrt{4 - x^2} & \mbox{if } |x| \le 2, \\
        0 & \mbox{otherwise}.
    \end{cases}
\end{equation}
The semicircle law arises naturally in random matrix theory.  We refer the reader to the original works of Wigner~\cite{wigner1958roots,MR77805} as well as the texts \cite{anderson2010introduction, bai_silverstein_2010, MR2906465} for a broad overview of random matrix theory and additional references. 

In the case when $pK$ converges to $0$, a straightforward proposition allows us to show that the empirical eigenvalue distribution converges to the Dirac mass at $0$.

Additionally, we use the weak convergence result of Alimohammadi, Isik, and Saberi \cite{Alimohammadi2025LocalLimitsSmallWorld} to conjecture formulas for the first five limiting moments of the eigenvalue distribution in the regime where $K$ and $p$ are fixed. In particular, the first three conjectured moments correspond to those found by Nakkirt \cite{nakkirt2020eigenvalues}. However, the procedure we use to obtain conjectured analytical formulas for the first limiting moments does not naturally extend to arbitrarily large moments.

A few questions remain open in the critical regime when $Kp$ converges to a constant $C \in (0,\infty)$ while $K$ grows to infinity but at a slower rate than $n$. Simulations (illustrated in Figure~\ref{fig:kptoconstant}) suggest that, in the critical regime, the limiting eigenvalue distribution exists but does not follow any known distribution. These simulations also suggest that, as $C \to \infty$, when appropriately scaled, we recover the semicircle law.

\begin{figure}[htbp]
    \centering
    \begin{subfigure}{0.48\textwidth}
        \centering
        \includegraphics[width=\textwidth]{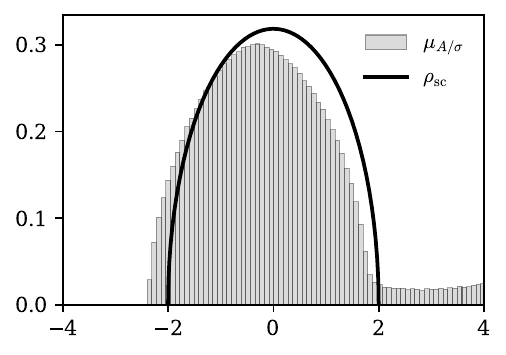}
        \caption{$n=6000$, $K=20$, $p=0.3$, averaged over $10$ runs}
        \label{fig:left}
    \end{subfigure}
    \hfill
    \begin{subfigure}{0.48\textwidth}
        \centering
        \includegraphics[width=\textwidth]{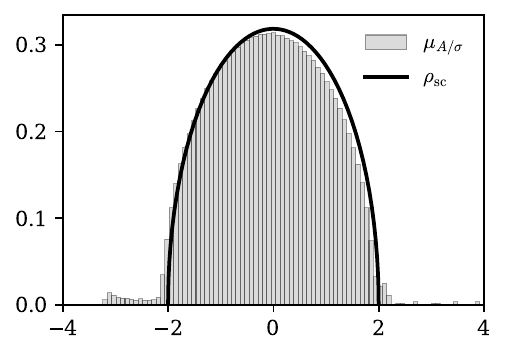}
        \caption{$n=6000$, $K=160$, $p=0.3$, averaged over $10$ runs}
        \label{fig:right}
    \end{subfigure}

    \caption{Empirical spectral density of $A/\sigma$ with $\sigma=\sqrt{Kp(2-p)}$ and $A$ the adjacency matrix of a Watts--Strogatz random graph, compared with the semicircle law. The histogram represents the empirical eigenvalue distribution, while the solid curve corresponds to the theoretical limiting density.}
    \label{fig:simulationdensity}
\end{figure}

\subsection{Definition of the Watts--Strogatz graph model}

To state our main results, we need to formally define the Watts--Strogatz graph model introduced in \cite{watts1998collective}. We use a slightly different construction, very common in the literature (see, e.g., \cite{barrat2000properties,farkas2001spectra,nakkirt2020eigenvalues}). 

\begin{defi}[Watts--Strogatz random graph model]\label{def:wattsstrogatz}
Let $n > 1$ be an integer, $K$ a positive even integer with $K<n$, and $p \in [0,1]$. The Watts--Strogatz random graph of size $n$ with parameters $(K,p)$, denoted $WS_n(K,p)$, is constructed as follows:

\begin{enumerate}
    \item \textbf{Initial lattice.}  
    Let $V_n = \{1,2,\dots,n\}$ denote the set of vertices of $WS_n(K,p)$. Arrange the vertices on a clockwise cycle. Each vertex $i \in V_n$ is connected to its $K/2$ nearest neighbors on each side, forming a regular ring lattice of degree $K$.

    \item\label{item:defWS2} \textbf{Rewiring step.}  
    For $i = 1,2,\dots,n$ in increasing order, and for each edge $\{i,j\}$ such that $j$ is one of the $K/2$ rightmost neighbors of $i$ (taken in increasing order), independently with probability $p$, rewire the edge $\{i,j\}$ as follows: remove the edge $\{i,j\}$ and add the edge $\{i,k\}$, where $k$ is chosen uniformly at random from the set of vertices in $V_n \setminus \{i,j\}$ that are not adjacent to $i$.
\end{enumerate}
\end{defi}

Since the number of edges of the Watts--Strogatz random graph is always equal to $n \cdot \frac K2$, $K$ is referred to as the mean degree.
Figure \ref{fig:graphdef} illustrates an example of a Watts--Strogatz random graph on $10$ vertices with mean degree $4$.

\begin{figure}[htbp]
\centering
\begin{tikzpicture}[
    every node/.style={
        circle,
        draw,
        minimum size=0.5cm,
        inner sep=1pt,
        font=\tiny
    },
    edge/.style={black}
]

\def\n{10}
\def\r{2.5}

\foreach \i in {1,...,\n}{
    \node[fill=white] (q\i) at ({90-360/\n*(\i-1)}:\r) {\i};
}

\foreach \i in {1,...,\n}{
    \pgfmathtruncatemacro{\j}{mod(\i,\n)+1}

    \ifthenelse{
        \(\i=1 \AND \j=2\) \OR
        \(\i=2 \AND \j=3\)
    }{
    }{
        \draw[edge] (q\i) -- (q\j);
    }
}

\foreach \i in {1,...,\n}{
    \pgfmathtruncatemacro{\j}{mod(\i+1,\n)+1}

    \ifthenelse{
        \(\i=7 \AND \j=9\)
    }{
    }{
        \draw[edge] (q\i) -- (q\j);
    }
}

\draw[dashed] (q1)--(q5);
\draw[dashed] (q2)--(q7);
\draw[dashed] (q7)--(q4);

\end{tikzpicture}
\caption{A Watts--Strogatz random graph on 10 vertices with mean degree 4. The edges $\{1,2\}$, $\{2,3\}$, and $\{7,9\}$ were rewired to $\{1,5\}$, $\{2,7\}$, and $\{7,4\}$, respectively.}
\label{fig:graphdef}
\end{figure}

\subsection{Main result} We begin by introducing some notation. For a real symmetric $n\times n$ matrix $M$, let $\mu_M$ denote its empirical spectral distribution:
\[
    \mu_M = \frac1n\sum_{i=1}^n\delta_{\lambda_i},
\]
where $\lambda_1,\dots,\lambda_n$ are the eigenvalues of $M$, counted with multiplicity, and $\delta_x$ denotes the Dirac mass at $x\in\mathbb{R}$.
Since the mean degree $K$ and the rewiring probability $p$ vary with $n$, we consider sequences $(K_n)_{n\in\mathbb N}$ and $(p_n)_{n\in\mathbb N}$, and study the adjacency matrix of $WS_n(K_n,p_n)$ for sufficiently large values of $n$.
Finally, for $q\in[0,1]$, each occurrence of $\bernoulli(q)$ denotes an independent Bernoulli random variable with parameter $q$.

Our main result is the following.

\begin{thm}\label{thm:mainresult}Let $(K_n)_{n\ge1}$ be a sequence of even integers satisfying $0< K_n < n$ for all sufficiently large values of $n$, and let $(p_n)_{n\ge1}$ be a sequence in $(0,1]$. Let $A_n$ denote the adjacency matrix of $WS_n(K_n, p_n)$. If
\begin{equation}\label{eq:assumption1}
    p_nK_n\toninfty\infty
\end{equation}
and
\begin{equation}\label{eq:assumption2}
    \frac{K_n}{n}\toninfty0
\end{equation}
then the empirical spectral distribution $\mu_{\frac{A_n}{\sqrt{K_n p_n(2- p_n)}}}$ of the normalized adjacency matrix $\frac{A_n}{\sqrt{K_n p_n(2- p_n)}}$ converges weakly in probability to the semicircle law $\mu_{\mathrm{SC}}$ as $n \to \infty$. 
\end{thm}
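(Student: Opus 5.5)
We will follow the coupling strategy suggested in the abstract: approximate $A_n$, up to a perturbation of small rank or small Frobenius norm, by $B_n + W_n$. Here $B_n$ is a random band matrix and $W_n$ is a sparse Wigner-type matrix independent of $B_n$. Throughout, write $\sigma_n^2 = K_np_n(2-p_n)$.

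\textbf{Step 1 (coupling).} Each lattice edge $\{i,j\}$ with $j$ among the $K/2$ right neighbors of $i$ is kept with probability $1-p$ and rewired with probability $p$. In the idealized model:
\begin{itemize}
\item band entries $B_{ij}$, for $0<|i-j|\le K/2$ cyclically, are independent $\bernoulli(1-p)$ variables;
\item each rewired edge $\{i,k\}$ has its new endpoint $k$ uniform on $V_n$, independently of everything else.
\end{itemize}
The true construction differs in two ways. First, the target $k$ is conditioned to avoid current neighbors of $i$. Second, a rewired edge can land inside the band or on an existing edge, creating dependence.

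We couple so that, with high probability, the true and idealized targets agree except for $O(nK\cdot K/n)=O(K^2)$ edges. The conditioning excludes at most about $K$ vertices out of $n$, so each edge is changed with probability $O(K/n)$. The expected number of such changes is therefore $O(nK\cdot K/n)$.

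We then drop the rewired edges that land in the band. There are $O(nKp\cdot K/n)$ of them in expectation. We also replace the ``uniform target'' structure by independent $\bernoulli(q)$ entries outside the band, with $q\approx Kp/n$. This uses the standard comparison: a sum of $nK/2$ independent uniform edges is close to an Erd\H{o}s--R\'enyi graph, via Poissonization, with differences of order $o(nK)$ edges.

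The key tool is the rank/Frobenius perturbation inequalities for empirical spectral distributions. A change affecting $o(n)$ rows, or a perturbation $E$ with $\|E\|_F^2 = o(n\sigma_n^2)$, does not alter the limit in L\'evy distance. Since a removed edge contributes $2/\sigma_n^2$ to the normalized squared Frobenius norm, we need the number of altered edges to be $o(n\sigma_n^2) = o(nKp)$. Our bound is $O(K^2)$, and $K^2 = o(nKp)$ requires $K/n = o(p)$.

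This is the main obstacle: when $p\ll K/n$, the crude bound fails. The fix we will try is to count more carefully. A change is needed only if the rewiring happened (probability $p$) \emph{and} the target conflicts (probability $O(K/n)$). This gives $O(nKp\cdot K/n) = O(K^2p) = o(nKp)$ by \eqref{eq:assumption2}. The same probability-$p$ factor handles in-band landings and double rewirings.

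\textbf{Step 2 (center and normalize).} Write $B_n = (1-p)T_n + \tilde B_n$, where $T_n$ is the deterministic band adjacency matrix. Similarly, $W_n = qJ + \tilde W_n$ off the band.

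The rank-one term $qJ$ is negligible by the rank inequality.

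$T_n$ is circulant, with eigenvalues $\sum_{1\le |m|\le K/2} e^{2\pi \mathrm{i} m \ell/n}$. The Frobenius bound does not apply to it, since $\|T_n\|_F^2\approx nK$ is comparable to $n\sigma_n^2$. Instead we use its Fourier structure: for $\ell$ outside a set of size $O(n/K)=o(n)$, its eigenvalues are $O(n/\ell)$. Hence $T_n$ is a low-rank matrix plus a matrix of small Frobenius norm. Indeed, after truncating to $\ell > \varepsilon n$, the remaining part has $\|\cdot\|_F^2 \lesssim n\cdot n/(\varepsilon n)$, which is $o(nKp)$ since $Kp\to\infty$. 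The prefactor $(1-p)^2$ and the division by $\sigma_n^2\ge Kp$ help further.

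After these steps, the normalized matrix is $(\tilde B_n+\tilde W_n)/\sigma_n$. This is a symmetric matrix with independent centered entries above the diagonal. The variances are
\[
p(1-p)\ \text{inside the band},\qquad \approx\frac{Kp}{n}\ \text{outside}.
\]
Each row therefore has total variance $Kp(1-p)+Kp=\sigma_n^2(1+o(1))$.

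\textbf{Step 3 (semicircle).} We apply the moment method, or the Stieltjes transform, for generalized Wigner matrices with a doubly stochastic variance profile. Row variance sums are constant and all entries satisfy $\max_{ij} s_{ij}/\sigma_n^2\to0$, which holds since $p/(Kp)\to 0$. The entries are bounded Bernoullis, and the sparsity condition $\sigma_n\to\infty$ follows from \eqref{eq:assumption1}. Under these conditions the standard Lindeberg-type results give convergence in probability to $\mu_{\mathrm{SC}}$. Concretely, we use the Stieltjes transform self-consistent equation $m_i=-1/(z+\sum_j s_{ij}m_j/\sigma_n^2)$, which has the constant solution. Variance of the ESD is controlled by the usual martingale (McDiarmid) bound on resolvent traces.
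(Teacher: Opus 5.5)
Your outline follows the paper's architecture. You couple $A_n$ to a band matrix plus an independent Erd\H{o}s--R\'enyi matrix at a cost of $o(np_nK_n)$ altered entries, remove the means using the circulant spectrum, and then identify the limit of an independent-entry model. Your estimates for the non-neighbour conditioning and for in-band landings ($O(pK^2)$ altered edges) are right; they play the role of the $K/n$ term in the paper's bound $8(K/n+1/\sqrt{pK})$.

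\textbf{The gap.} It lies in the step you call a standard comparison. The rewired edges are not $nK/2$ independent uniform edges. Vertex $i$ emits exactly $r_i=\#\{j\le K/2 : B_{i,i+j}=0\}\sim\binomial(K/2,p)$ of them, and this is a deterministic function of row $i$ of your band matrix $B_n$. So the idealized rewired graph is not independent of $B_n$. Replacing it by Bernoulli entries independent of $B_n$ therefore costs about $2\sum_i|r_i-R_i|$ entries, where $R_i$ is the number of new edges the Erd\H{o}s--R\'enyi graph attaches to $i$; $R_i$ is necessarily independent of $r_i$. Since $R_i$ has mean about $pK/2$ and fluctuations of order $\sqrt{pK}$, this cost is of order $n\sqrt{pK}$. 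That is $o(npK)$ only because $pK\to\infty$.

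Your claimed $o(nK)$ does not meet your own criterion $o(nKp)$ when $p_n\to0$. You also never use \eqref{eq:assumption1} at the coupling level, although this is exactly where it is indispensable. This term, not the $O(K^2)$ one, is the real obstacle. In the paper it is the core of Proposition~\ref{prop:coupling}:
\begin{itemize}
\item the Erd\H{o}s--R\'enyi graph is grown vertex by vertex from candidate sets $\mathcal{C}^i$ that are independent of the rewiring coins;
\item rewirings are routed to candidates;
\item Lemma~\ref{lm:didistribution} with Cauchy--Schwarz gives $\mathbb{E}|u_i-r_i|\le\sqrt{\var(u_i)+\var(r_i)+(\mathbb{E}[u_i-r_i])^2}=O(\sqrt{pK}+pK^2/n)$.
\end{itemize}

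\textbf{How to repair it.} Your Poissonization idea works if you carry it out per vertex.
\begin{itemize}
\item Let $N_{ik}\sim\poisson(pK/(2n))$ be independent over ordered pairs and independent of the band coins. Then the indicators of $\{N_{ik}+N_{ki}\ge1\}$ are independent Bernoulli entries.
\item $R_i=\sum_kN_{ik}\sim\poisson(pK/2)$, and given $R_i$ its targets are i.i.d.\ uniform. So $i$'s actual rewirings can share the first $\min(r_i,R_i)$ of these targets.
\item Independence gives $\mathbb{E}|r_i-R_i|\le\sqrt{\var(r_i)+\var(R_i)}\le\sqrt{pK}$.
\item Adding the $O(p^2K^2)$ multi-edges and $O(pK^2)$ band landings, the normalized squared Frobenius error is $O\bigl((pK)^{-1/2}+K/n\bigr)$.
\end{itemize}
This estimate has to be stated and proved; it is the heart of the theorem, not a routine comparison.

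\textbf{Steps 2 and 3.} These are otherwise sound, and Step 3 takes a genuinely different route from the paper. You treat the centered sum as one inhomogeneous Wigner matrix whose variance profile ($p(1-p)$ on the circulant band, $q(1-q)$ off it) has exactly constant row sums. You then invoke the global semicircle law for such profiles.

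The paper instead uses its Lindeberg replacement principle (Theorem~\ref{thm:replacement}) to swap the Erd\H{o}s--R\'enyi part for a GOE and the band part for a Gaussian band matrix. It gets the semicircle for the band part from Molchanov--Pastur--Khorunzhii. It combines the two through the orthogonal invariance of the GOE, i.e.\ free convolution of semicircles of variances $1$ and $1-p$, which gives $2-p$. Finally it removes the assumption that $p_n$ converges by a subsequence argument.

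Your route handles all $(p_n)$ at once and needs no freeness input. The price is citing, or reproving by moments, the variance-profile semicircle law.

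One minor point in Step 2: with $\eps$ fixed, the discarded low-frequency part of $T_n$ has rank about $2\eps n$, not $o(n)$. Take $\eps_n\to0$ with $\eps_npK\to\infty$ instead.
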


While our main result shows that, after normalization, the empirical spectral measure of $A_n$ converges to the semicircle law, we emphasize that $A_n$ is not a Wigner matrix. 
In fact, the entries of $A_n$ are quite dependent on one another due to the rewiring step in the graph construction. 
Due to this dependence, standard random matrix theory techniques, such as the moment method or Schur's complement for the resolvent, do not extend straightforwardly to this model.
Instead, the proof, given in Section~\ref{sec:proofmainresult}, relies crucially on the following comparison result.

\begin{thm}\label{thm:coupling}With the same notation as in Theorem~\ref{thm:mainresult}, define the distributions on $Z_n$ and $U_n$ as follows:
\begin{enumerate}
    \item $Z_n$ is an $n\times n$ symmetric random matrix whose upper-triangular entries are independent, and for every $i \le j$,
    \[
        (Z_n)_{ij} =
        \begin{cases}
            \bernoulli(1 - p_n) & \mbox{if }\quad 0 < \min\big(|i-j|,\, n - |i-j|\big) \le \frac{K_n}{2}, \\
            0 & \mbox{otherwise.}
        \end{cases}
    \]

    \item $U_n$ is the adjacency matrix of an Erd\H{o}s--R\'enyi random graph of size $n$ with parameter $\frac{p_n K_n}{n}$, independent of $Z_n$. In other words, $U_n$ is an $n\times n$ symmetric random matrix independent of $Z_n$ and whose upper-triangular entries are independent and satisfy, for every $i \le j$,
    \[
        (U_n)_{ij} =
        \begin{cases}
            \bernoulli\left(\frac{p_nK_n}{n}\right) & \mbox{if }\quad i < j, \\
            0 & \mbox{if } i = j.
        \end{cases}
    \]
\end{enumerate}
Let $A_n$ denote the adjacency matrix of $WS_n(K_n, p_n)$. 
Then, under the same assumptions as in Theorem~\ref{thm:mainresult}, namely~\eqref{eq:assumption1} and~\eqref{eq:assumption2}, there is a coupling of $A_n$ and $(Z_n, U_n)$ so that $Z_n$ and $U_n$ are independent and
\begin{equation} \label{eq:diffconv}
    \mu_{\frac{A_n}{\sqrt{p_nK_n}}} - \mu_{\frac{Z_n}{\sqrt{p_nK_n}} + \frac{U_n}{\sqrt{p_nK_n}}}\toninfty 0\qquad\mbox{ weakly in probability}. 
\end{equation}
\end{thm}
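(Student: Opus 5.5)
The coupling will be built edge by edge, and the difference will be controlled through the rank inequality for empirical spectral distributions. We use the standard bound: for symmetric $M,M'$, the Kolmogorov distance satisfies
\[
\|F_{\mu_M}-F_{\mu_{M'}}\|_\infty \le \frac{\rank(M-M')}{n}.
\]
Since the normalization by $\sqrt{p_nK_n}$ does not affect rank, it suffices to construct a coupling in which $\rank(A_n - Z_n - U_n) = o_P(n)$. In fact we aim to show that $\mathbb{E}\,\nnz(A_n-Z_n-U_n) = o(n)$, because the rank is bounded by the number of nonzero entries. The difference may also have entries $\pm1$ or $2$, which is harmless for the rank bound.

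\textbf{Construction of the coupling.} For each lattice edge $\{i,j\}$ with $j$ a right neighbor of $i$, let $\xi_{ij}\sim\bernoulli(1-p_n)$ indicate that the edge is \emph{not} rewired. These are independent, and we set $(Z_n)_{ij}=\xi_{ij}$, so $Z_n$ is exactly the lattice part of $A_n$ that survives rewiring. The rewired edges form a random set of additional edges $\{i,k_{ij}\}$. Conditionally on the $\xi$'s, their number $N$ is $\binomial(nK_n/2,p_n)$, with mean $np_nK_n/2$. If the targets $k$ were uniform over all vertices other than $i$, independently of everything, then the rewired edges would look like a multigraph with about $np_nK_n/2$ uniform edges. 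This is close to the Erdős–Rényi graph $U_n$ with edge probability $p_nK_n/n$, whose edge count is $\binomial(\binom n2,p_nK_n/n)$ with the same mean to first order.

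We therefore couple in three steps. First, we couple each target $k$, which is uniform on the non-neighbors of $i$ at the time of rewiring, with a uniform $\tilde k$ on $V_n\setminus\{i\}$, such that $k\neq \tilde k$ with probability at most $(\deg_i+2)/n$. Given the bounded-degree structure, $\deg_i$ is typically $O(K_n)$, so this probability is $O(K_n/n)$. Second, we couple the multigraph of the pairs $\{i,\tilde k\}$ with an independent-edge graph. Letting each unordered pair receive a $\poisson$ number of edges, a uniform-edge model with a $\poisson$ total count gives independent Poisson multiplicities of mean about $p_nK_n/n$ per pair. These are coupled with $\bernoulli(p_nK_n/n)$ variables so that they disagree with probability $O((p_nK_n/n)^2)$ per pair. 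Third, we adjust the edge count, since $\binomial(nK_n/2,p_n)$ and the Poissonized count differ by $O_P(\sqrt{np_nK_n})$, which is $o(n)$ edges.

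One could worry that the independence of $Z_n$ and $U_n$ fails, because the source $i$ of each rewired edge depends on which $\xi_{ij}=0$. This does not matter, because for uniform targets the resulting pairs $\{i,\tilde k\}$ have a distribution depending on the $\xi$'s only through the source vertices. The plan is to replace the ordered pair by a uniformly random unordered pair with a coupling error. Since a uniform $\tilde k$ with fixed $i$ is not a uniform pair, this error is not small. The fix is instead to use the Poisson description: rewired edges emanating from $i$ number $\binomial(K_n/2,p_n)$. We replace these counts by independent $\poisson(p_nK_n/2)$ counts independent of $\xi$ and $Z_n$, paying the total-variation error only on a sub-count. Alternatively, and more cleanly, we note that the rank inequality tolerates modifications of $o(n)$ rows. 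We therefore resample the source counts per vertex independently of $\xi$. The number of vertices whose rewired out-degree changes has expectation that is $o(n)$ only if we are careful, so this step is where I would instead allow the change to cost in the Lévy distance via a Hoffman–Wielandt bound: $\frac1n\|\Delta\|_F^2/(p_nK_n)$.

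\textbf{Main obstacle.} Controlling the dependence between which edges were rewired (which is $Z_n$) and where the new edges sit (which should give $U_n$), while keeping the per-vertex structure, is the main difficulty. A pure rank or $\nnz$ count fails here, since the out-degrees of vertices differ at order $\sqrt{p_nK_n}$ per vertex. The resolution I would pursue is the Lévy-distance bound
\[
L(\mu_{M},\mu_{M'})^3\le \frac1n\tr (M-M')^2.
\]
With the scaling by $1/\sqrt{p_nK_n}$, changing $m$ entries costs $m/(np_nK_n)$. So we need only $o(np_nK_n)$ discrepancies, which is far more room than the rank bound gives. Per-vertex mismatches of size $O(\sqrt{p_nK_n})$ total $O(n\sqrt{p_nK_n})=o(np_nK_n)$ by \eqref{eq:assumption1}. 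The target mismatches total $O(np_nK_n\cdot K_n/n)=o(np_nK_n)$ by \eqref{eq:assumption2}. Combining the rank inequality with the Hoffman–Wielandt bound, after verifying these expectations and applying Markov's inequality, yields \eqref{eq:diffconv}.
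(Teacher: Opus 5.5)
Your final plan (the Frobenius/L\'evy bound in your last paragraph, not the rank bound you start from) is correct, and its skeleton matches the paper's proof. In both, $Z_n$ is the indicator of surviving lattice edges, so $A_n=Z_n+\widetilde U_n$ with $\widetilde U_n$ the rewired edges. An Erd\H{o}s--R\'enyi matrix $U_n$, independent of $Z_n$, is generated so that its randomness also drives the rewiring targets. Then $\mathbb{E}\,\nnz(U_n-\widetilde U_n)=O(p_nK_n^2+n\sqrt{p_nK_n})=o(np_nK_n)$, and Theorem~\ref{thm:cor41} finishes. Your two error terms are the paper's: the $\sqrt{p_nK_n}$-per-vertex term is its $\mathbb{E}|u_i-r_i|$ with $u_i$ independent of $r_i$, and your target mismatch corresponds to its $4\sum_i a_i$.

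The mechanism is what differs. The paper gives each vertex a candidate set $\mathcal{C}^i$ with inclusion probability $1-\sqrt{1-p_nK_n/n}$, so every pair gets two independent chances and $U_n$ is exactly Erd\H{o}s--R\'enyi; it draws targets from $W^{ij}\cap\mathcal{C}^i$ when possible. You use $\poisson(p_nK_n/2)$ out-counts $s_i$ with i.i.d.\ uniform targets. Poisson colouring gives independent $\poisson(p_nK_n/(n-1))$ multiplicities per unordered pair. Each Watts--Strogatz target is maximally coupled to a fresh uniform target, and a final collapse to $\bernoulli(p_nK_n/n)$ costs $O(p_n^2K_n^2+p_nK_n)=o(np_nK_n)$ entries. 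The paper's route avoids this Poissonization error. Yours makes the exact Watts--Strogatz law automatic. By contrast, the paper reuses one $\mathcal{C}^i$ for all rewirings at $i$, and its claim that each target is uniform on $W^{ij}$ given the past needs more care once a lattice neighbor freed at an earlier rewiring of $i$ enters $W^{ij}$; such a vertex is slightly favored.

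To turn the sketch into a proof, you need to fix several points.

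\begin{itemize}
\item Drop the rank route rather than hedging it. $s_i$ must be independent of $r_i$, because a total-variation coupling of the two would make $U_n$ depend on $Z_n$. Hence $\mathbb{P}(s_i=r_i)\to0$, so essentially every row changes, and your remark that this can be made $o(n)$ is false. The correct cost is $\mathbb{E}|r_i-s_i|\le\sqrt{\var(r_i)+\var(s_i)}\le\sqrt{p_nK_n}$.
\item Run the coupling in the right direction. Sample $s_i$ and $\tilde k_{i,1},\tilde k_{i,2},\dots$ first, independently of the $\xi$'s. Let the $m$-th target of $i$ be $\tilde k_{i,m}$ if it lies in $W^{ij}$, and a fresh uniform element of $W^{ij}$ otherwise.
\item Bound the mismatch explicitly by $\sum_i|r_i-s_i|+2\#\{k\neq\tilde k\}$ plus the collapse error. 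Control $\#\{k\neq\tilde k\}$ by splitting collisions into lattice neighbors (expected $O(p_nK_n^2)$) and earlier rewired neighbors (expected $O(p_n^2K_n^2)$), using that $\tilde k_{i,m}$ is independent of the current graph.
\end{itemize}

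Your ``adjust the edge count'' step is unnecessary. Also, the entries of $A_n-Z_n-U_n=\widetilde U_n-U_n$ lie in $\{-1,0,1\}$, so the value $2$ you mention cannot occur.
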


Theorem~\ref{thm:coupling} allows us to reduce the proof of Theorem~\ref{thm:mainresult} to the study of the sum of independent random matrices $\frac{Z_n}{\sqrt{p_nK_n}} + \frac{U_n}{\sqrt{p_nK_n}}$, for which we can apply standard random matrix theory results. 
The proof of Theorem~\ref{thm:coupling} relies on a novel coupling argument and is given in Section~\ref{sec:coupling}.

Finally, the following proposition, whose proof is given in Appendix~\ref{sec:kpto0}, describes the case when $p_nK_n \to 0$ as $n \to \infty$.
\begin{prop}\label{prop:kpto0}
    With the same notation as in Theorem~\ref{thm:mainresult}, assume
    \[
        p_nK_n\toninfty 0\quad\mbox{ and }\quad K_n/n\toninfty0,
    \]with no additional assumption on the asymptotic behaviors of $(K_n)_{n\ge1}$ and $(p_n)_{n\ge1}$.

    \noindent Then,
    \begin{equation}
        \mu_{\frac{A_n}{\sqrt{p_nK_n}}}\toninfty\delta_0\qquad\mbox{ weakly in probability},
    \end{equation}where $\delta_0$ denotes the Dirac mass at $0$.
\end{prop}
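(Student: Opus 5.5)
The plan is to use the rank inequality for empirical spectral distributions. For real symmetric $n\times n$ matrices $M,M'$,
\[
\sup_x\big|F_{\mu_M}(x)-F_{\mu_{M'}}(x)\big|\le \frac{\rank(M-M')}{n}.
\]
This reduces the claim to showing that, outside an event of vanishing probability, $A_n$ differs by a matrix of rank $o(n)$ from a matrix whose spectrum, after division by $\sqrt{p_nK_n}$, concentrates at $0$. I would apply it twice: once to strip off the randomness coming from rewiring, and once to control what remains.

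\textbf{Step 1: the rewiring perturbation.} Let $L_n$ be the adjacency matrix of the initial ring lattice, and write $A_n=L_n+E_n$. Each rewired edge $\{i,j\}\to\{i,k\}$ changes at most four entries of $L_n$, which are confined to rows and columns $i,j,k$. Hence $\rank(E_n)\le 2N_n$, where $N_n\sim\binomial(nK_n/2,p_n)$ is the number of rewired edges. Since $\mathbb E N_n/n=p_nK_n/2\to0$, Markov's inequality gives $\rank(E_n)/n\to0$ in probability. By the rank inequality, $\mu_{A_n/\sqrt{p_nK_n}}$ and $\mu_{L_n/\sqrt{p_nK_n}}$ are therefore asymptotically equal in Kolmogorov distance, in probability. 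This step only needs $p_nK_n\to0$, and it holds for every normalization, because the rank inequality is scale invariant.

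\textbf{Step 2: the deterministic lattice.} It remains to show that $\mu_{L_n/\sqrt{p_nK_n}}\to\delta_0$. Since $L_n$ is circulant, its eigenvalues are explicit:
\[
\lambda_j=\sum_{k=1}^{K_n/2}2\cos\!\Big(\frac{2\pi jk}{n}\Big)=\frac{\sin\big((K_n+1)\pi j/n\big)}{\sin(\pi j/n)}-1 .
\]
The task is to count the $j$ with $|\lambda_j|>\eps\sqrt{p_nK_n}$ and show this count is $o(n)$. My first attempt would be to approximate $L_n$ by a matrix of rank $o(n)$ plus a matrix of small Hilbert--Schmidt norm, then use $\frac1n\tr(M^2)$ bounds. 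I would use $K_n/n\to0$ here: the Dirichlet kernel is of size $K_n$ only for $j\lesssim n/K_n$, which is an $o(n)$ set of indices.

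\textbf{Main obstacle.} Step 2 is where I expect the argument to break. For $n/K_n\ll j\le n/2$ one has $|\lambda_j|\asymp \min(K_n,n/j)$, which is bounded below by a constant for a positive fraction of $j$. Dividing by $\sqrt{p_nK_n}\to0$ sends these eigenvalues to infinity rather than to $0$. For instance, with $p_n\to0$ and $K_n\equiv2$, $\mu_{L_n}$ is the arcsine law on $[-2,2]$, and rescaling it by $1/\sqrt{p_nK_n}$ makes its mass escape. So I would first verify this computation carefully. If it holds, the proof that goes through is Step 1 combined with the corresponding statement for the lattice-centered matrix $(A_n-L_n)/\sqrt{p_nK_n}$, which converges to $\delta_0$ by the same rank bound, since $\rank(A_n-L_n)/n\to0$. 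For the proposition exactly as worded, Step 2 is the decisive and possibly failing point: it requires $\rank(L_n)$, or the number of non-negligible eigenvalues of $L_n$, to be $o(n)$ after this scaling, and the explicit spectrum suggests that is not the case.
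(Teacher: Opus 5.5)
Your Step 1 is the paper's first step. The paper bounds $\rank(A_n-D_n)\le\nnz(A_n-D_n)\le 4\times$(number of rewired edges) instead of using rank $2$ per rewiring, which makes no difference. Your suspicion about Step 2 is correct: the gap is in the statement, not in your argument. The paper finishes by citing Lemma~\ref{lm:circulantmatrix} for $\mu_{D_n/\sqrt{p_nK_n}}\to\delta_0$. That lemma is stated and proved only under \eqref{eq:assumption1}, i.e.\ $p_nK_n\to\infty$. When $p_nK_n\to0$, its cutoff $k_1=\lfloor n/\sqrt[3]{p_nK_n}\rfloor$ exceeds $n$, so the ``low-rank'' piece $D^{(1)}_n$ is all of $D_n$ and nothing is proved. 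No argument can close this step, because the proposition is false.

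Your $K_n\equiv 2$ (arcsine) example is valid, and the failure holds for every admissible sequence. Your heuristic $|\lambda_j|\asymp\min(K_n,n/j)$ is only an upper envelope, since $\lambda_j$ changes sign; the following replaces it.
\begin{itemize}
\item For $1\le j\le n-1$ we have $\sin(\pi j/n)>0$, so $\lambda_j\le -1$ whenever $\sin((K_n+1)\pi j/n)\le 0$.
\item This holds for every integer $j$ in the disjoint intervals $[(2l-1)n/(K_n+1),\,2ln/(K_n+1)]\subset(0,n)$, $l=1,\dots,K_n/2$.
\item So at least $\frac{K_n}{2}\bigl(\frac{n}{K_n+1}-1\bigr)\ge\frac n3-\frac{K_n}{2}$ eigenvalues of $D_n$ satisfy $|\lambda_j|\ge1$.
\item Fix $M>0$. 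Once $\sqrt{p_nK_n}<1/M$, this gives $\mu_{D_n/\sqrt{p_nK_n}}([-M,M])\le\frac23+\frac{K_n}{2n}$.
\item Your Step 1, in its Kolmogorov-distance form, then gives $\mu_{A_n/\sqrt{p_nK_n}}([-M,M])\le\frac23+\frac{K_n}{2n}+\frac2n\rank(A_n-D_n)$. The last term tends to $0$ in probability.
\end{itemize}
Test against a continuous $\phi$ with $0\le\phi\le\chi_{[-M,M]}$ and $\phi(0)=1$. This rules out weak convergence to $\delta_0$ for every sequence satisfying the hypotheses. In fact asymptotically at least a third of the mass escapes to infinity, so the sequence is not even tight.

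What your Step 1 (and the paper's) does prove is that $\sup_x|F_{\mu_{A_n}}(x)-F_{\mu_{D_n}}(x)|\to0$ in probability, under $p_nK_n\to0$ alone and at any scaling. It also gives $\mu_{(A_n-D_n)/\sqrt{p_nK_n}}\to\delta_0$. The proposition should be restated as one of these.
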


\subsection{Discussion and conjectures}








Our main results handle the cases when $K_n p_n \to \infty$ and $K_n p_n \to 0$.  It is natural to also consider the case when $K_n p_n \to C \in (0, \infty)$.
\begin{conj} \label{conj:C}
    In the regime where $K_n\toninfty\infty$ with $K_n/n\toninfty0$ while $K_np_n\toninfty C$ for a constant $C\in(0,\infty)$, we conjecture that there exists a measure $\mu_C$ that depends only on $C$ such that
\[
    \mu_{\frac{A_n}{\sqrt{K_np_n(2-p_n)}}}\toninfty\mu_C\qquad\mbox{ weakly in probability},
\]where $A_n$ denotes the adjacency matrix of $WS_n(K_n,p_n)$. 
\end{conj}
Under the hypotheses of Conjecture \ref{conj:C}, Remark~\ref{rk:muctomusc} below makes it natural to conjecture that
\[
    \mu_C\;\xrightarrow[C\to\infty]{}\;\mu_{\mathrm{SC}}, 
\]where the convergence is the weak convergence of probability measures. We performed simulations for various values of $K$ and $p$, which confirm these conjectures. Figure~\ref{fig:kptoconstant} shows the empirical distributions obtained from these simulations, for matrices of size $3500$, averaged over $4$ independent runs.

When $K$ and $p$ are fixed, we make in Section~\ref{sec:limitingmoments} a conjecture about the limiting moments of the empirical eigenvalue distribution of the adjacency matrix of $WS_n(K,p)$. Under the assumption that this conjecture holds, we derive some closed analytic formulas for the first five limiting moments. To the best of our knowledge, these conjectured moments do not correspond to any known distribution.
Under the natural hypothesis that the empirical distribution of the adjacency matrix of $WS_n(K,p)$, when scaled by $\sqrt{Kp(2-p)}$, converges to a limiting measure $\mu_{K,p}$ on the real line, Remark~\ref{rk:muctomusc} below motivates the following conjecture.
\begin{conj}
    $
    \mu_{K,p}\;\xrightarrow[Kp\to\infty]{}\;\mu_{\mathrm{SC}},
    $
    weakly in probability.
\end{conj}

\begin{figure}[htbp]
    \centering
    
    \begin{minipage}{0.3\textwidth}
    \centering
    \textbf{$Kp = 0.05$}
    \end{minipage}
    \hfill
    \begin{minipage}{0.3\textwidth}
    \centering
    \textbf{$Kp = 0.5$}
    \end{minipage}
    \hfill
    \begin{minipage}{0.3\textwidth}
    \centering
    \textbf{$Kp = 5$}
    \end{minipage}
    
    \vspace{0.5em}
    
    \begin{subfigure}{0.3\textwidth}
        \centering
        \includegraphics[width=\textwidth]{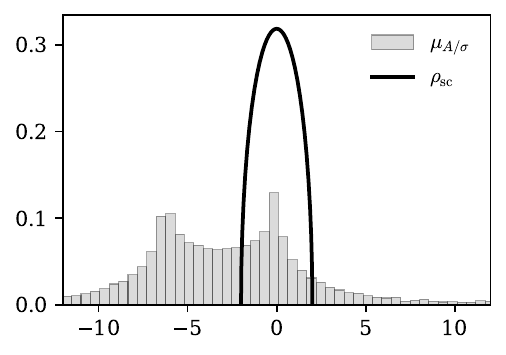}
        \caption{$K=200$, $p=2.5\times10^{-4}$}
    \end{subfigure}
    \hfill
    \begin{subfigure}{0.3\textwidth}
        \centering
        \includegraphics[width=\textwidth]{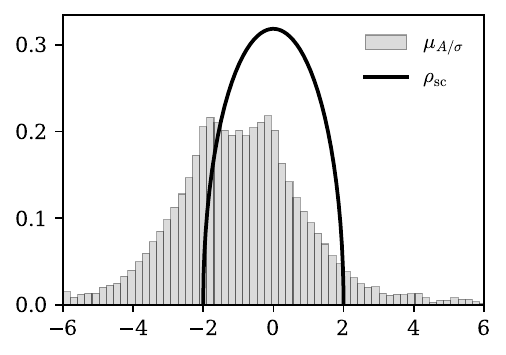}
        \caption{$K=200$, $p=2.5\times10^{-3}$}
    \end{subfigure}
    \hfill
    \begin{subfigure}{0.3\textwidth}
        \centering
        \includegraphics[width=\textwidth]{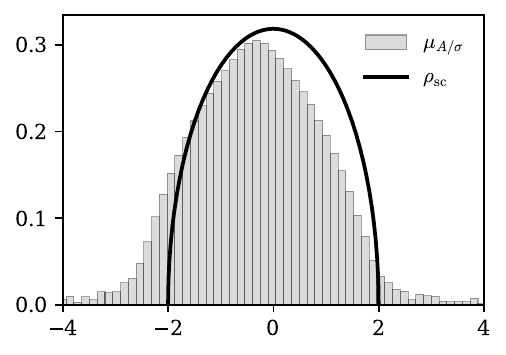}
        \caption{$K=200$, $p=2.5\times10^{-2}$}
    \end{subfigure}
    
    \vspace{0.5em}
    
    \begin{subfigure}{0.3\textwidth}
        \centering
        \includegraphics[width=\textwidth]{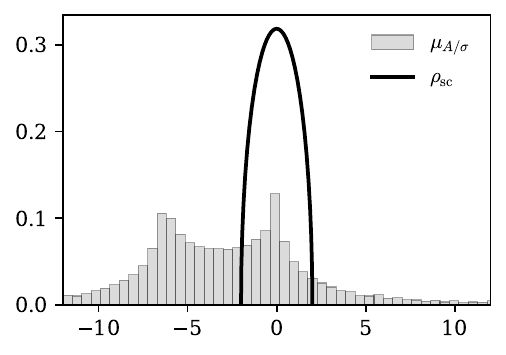}
        \caption{$K=400$, $p=1.25\times10^{-4}$}
    \end{subfigure}
    \hfill
    \begin{subfigure}{0.3\textwidth}
        \centering
        \includegraphics[width=\textwidth]{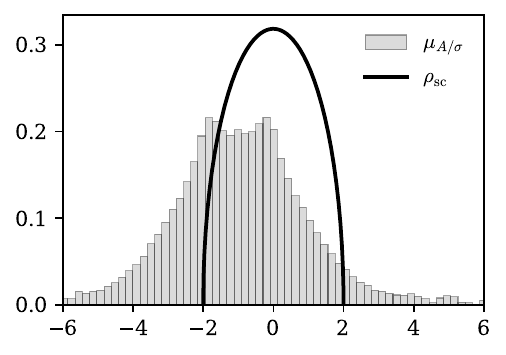}
        \caption{$K=400$, $p=1.25\times10^{-3}$}
    \end{subfigure}
    \hfill
    \begin{subfigure}{0.3\textwidth}
        \centering
        \includegraphics[width=\textwidth]{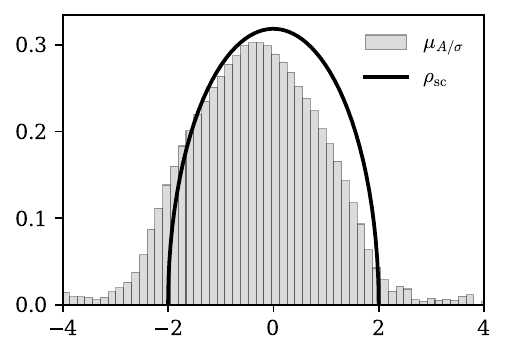}
        \caption{$K=400$, $p=1.25\times10^{-2}$}
    \end{subfigure}
    
    \caption{Empirical spectral density of $A/\sigma$ with $\sigma=\sqrt{Kp(2-p)}$ and $A$ the adjacency matrix of $WS_n(K,p)$, compared with the semicircle law. The histogram represents the empirical eigenvalue distribution, while the solid curve corresponds to the theoretical limiting density. Columns correspond to fixed values of $Kp$, while rows vary $K$.}
    \label{fig:kptoconstant}
\end{figure}

\subsection{Related work}
While we are not aware of any prior results proving the convergence of the eigenvalue distribution of the Watts--Strogatz model, we mention in this subsection results concerning some related random matrix models and random graph models.

It is well known that, under mild assumptions, the empirical eigenvalue distribution of symmetric matrices with independent and identically distributed (i.i.d.) upper-triangular entries, converges (in an appropriate sense) to the Wigner semicircle law (see e.g. \cite{erdos2011universality}). This result extends to several related models. In particular, it is shown in  \cite{MolchanovPasturKhorunzhii1992, BMP1991} that, under additional weak assumptions, the universality of the Wigner semicircle law holds for symmetric real band matrices with i.i.d. entries. It is worth mentioning that our result relies on this universality property.

A classical framework for random matrices arises from adjacency matrices of random graphs.
The most studied model of random graphs is the Erd\H{o}s--R\'enyi random graph, which starts with a complete graph where each edge is independently removed with the same probability. Many results are known for the eigenvalues of the adjacency matrix of this model, including the semicircle law \cite{MR637828, MR77805, wigner1958roots} and local laws  \cite{erdos2012graph2,erdos2013graph1}.  

The universality of the Wigner semicircle law typically fails when correlations are introduced. For instance, for regular graphs, i.e., graphs chosen uniformly at random among graphs with fixed degree $k$, McKay \cite{mckay1981regular} derives an explicit formula for the limiting eigenvalue distribution, now known as the McKay distribution.  We  also refer the reader to \cite{tran_vu_wang_2013} for the case when $k \to \infty$. 

It is also worth mentioning the power-law graph model. When the degree distribution follows a power law, Chung, Lu and Vu \cite{chung2003expecteddegrees} prove convergence results for the empirical spectral distribution. Finally, Bordenave and Lelarge \cite{bordenave_lelarge_2010} characterize the limiting spectral distribution in the case where the graph locally converges to a Galton--Watson tree with prescribed degree distribution.

\subsection{Overview of the paper}
Section~\ref{sec:notation} recalls and introduces notation used in the paper. Sections~\ref{sec:coupling} and~\ref{sec:proofmainresult} are dedicated to the proofs of Theorem~\ref{thm:coupling} and Theorem~\ref{thm:mainresult}, respectively. We discuss in Section~\ref{sec:limitingmoments} the case where $K$ and $p$ are fixed. Finally, Appendix~\ref{sec:tools} recalls a few well-known results from Bai and Silverstein's text \cite{bai_silverstein_2010} used throughout the paper, and Appendix~\ref{sec:kpto0} gives the proof of Proposition~\ref{prop:kpto0}.
Appendix~\ref{sec:replacement} provides a replacement principle result (Theorem \ref{thm:replacement}) used in several parts of the proof.

\section{Notation}\label{sec:notation}

We recall and introduce the following notation that is used throughout the paper.

For a matrix $M$, $\nnz(M)$ denotes the number of nonzero entries in $M$.  In particular, it always follows that $\rank(M) \leq \nnz(M)$.  $\|M\|$ is the spectral norm of $M$. Namely, when $M$ is an $n \times n$ real symmetric matrix, 
\[
    \|M\| = \max_{i\in\{1,\dots,n\}}|\lambda_i|,
\]where $\lambda_1,\dots,\lambda_n$ are the eigenvalues of $M$, counted with multiplicity.

For a real symmetric $n\times n$ matrix $M$, $\mu_M$ denotes its empirical spectral distribution, defined by
\[
    \mu_M = \frac1n\sum_{i=1}^n\delta_{\lambda_i},
\]where $\lambda_1,\dots,\lambda_n$ are the eigenvalues of $M$, counted with multiplicity, and $\delta_x$ is the Dirac mass at $x \in \mathbb{R}$.

For $n\in\mathbb{N}$ and real numbers $\lambda_1,\dots,\lambda_n$, we denote by $\diag(\lambda_1,\dots,\lambda_n)$ the diagonal matrix with diagonal entries $\lambda_1,\dots,\lambda_n$:
\[
    \diag(\lambda_1,\dots,\lambda_n)=
    \begin{pmatrix}
        \lambda_1 & 0 & \cdots & 0\\
        0 & \lambda_2 & \ddots & \vdots\\
        \vdots & \ddots & \ddots & 0\\
        0 & \cdots & 0 & \lambda_n
    \end{pmatrix}.
\]

$\mu_{\mathrm{SC}}$ always denotes the semicircle law, in the sense of the measure on $\mathbb{R}$ with density $\rho_{\mathrm{SC}}$ given in \eqref{eq:semicircledensity}. 

For $k\in\mathbb{N}$ and $(M_n)_{n\in\mathbb{N}}$ a matrix sequence such that $\lim_{n\to\infty}\frac{1}{n}\mathbb{E}[\tr(M_n^k)]$ exists, we write $\tau(M_n^k) =\lim_{n\to\infty}\frac{1}{n}\mathbb{E}[\tr(M_n^k)]$ for the limiting moments.

For a finite set $S$, $|S|$ denotes its cardinality. If $V \subseteq S$, then $V^c$ denotes the complement of $V$ in $S$.

For all $q \in [0,1]$, each occurrence of $\bernoulli(q)$ denotes an independent copy of a Bernoulli random variable with parameter $q$. Similarly, each occurrence of $\poisson(\lambda)$, for $\lambda > 0$, denotes an independent copy of a Poisson random variable with parameter $\lambda$. We use $\binomial(n,q)$ for a Binomial random variable with parameters $n \ge 1$ and $q \in [0,1]$, and $\mathcal{N}(m,\sigma^2)$ for a normal random variable with mean $m \in \mathbb{R}$ and variance $\sigma^2 > 0$.

Unless otherwise stated, we use asymptotic notation under the assumption that $n \to \infty$.  We write $a_n = o(b_n)$ if \[ \lim_{n \to \infty} \frac{a_n}{b_n} = 0. \]

\section{Proof of Theorem~\ref{thm:coupling}}\label{sec:coupling}
Let $(K_n)_{n\ge1}$ denote a sequence of even numbers that satisfy $0<K_n<n$ for all sufficiently large values of $n$, and let $(p_n)_{n\ge1}$ denote a sequence of numbers in $(0,1]$.

The proof relies entirely on the construction, for all $n$ large enough, of three $n \times n$ matrices $U_n$, $Z_n$, and $\widetilde{U}_n$ that satisfy the following proposition.
\begin{prop}\label{prop:coupling} 
Let $A_n$ be the adjacency matrix of $WS_n(K_n,p_n)$.  There is a coupling between $A_n$ and $(U_n, Z_n, \widetilde{U}_n)$ so that the following properties hold.
\begin{enumerate}[itemsep=0.3em]
    \item \label{item:AequalZU}$A_n = Z_n+\widetilde{U}_n$. 
    \item \label{item:ZUditrib} $Z_n$ and $U_n$ are independent and have the same distributions as stated in Theorem~\ref{thm:coupling}. 
    \item \label{item:secondsm} We have
    \[
        \mathbb{E}\left[ \frac{1}{n} \tr \left( \frac{(Z_n+U_n) - (Z_n+\widetilde{U}_n)}{\sqrt{p_nK_n}} \right)^2 \right] \le 8\left(\frac{K_n}{n}+\frac{1}{\sqrt{p_nK_n}}\right).
    \]
\end{enumerate}
\end{prop}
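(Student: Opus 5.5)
We build the coupling directly from the randomness used in Definition~\ref{def:wattsstrogatz}. For every lattice edge $\{i,j\}$ with $j$ among the $K_n/2$ right neighbours of $i$, we draw an independent indicator $\xi_{ij}\sim\bernoulli(1-p_n)$, meaning the edge is \emph{not} rewired. We then set $(Z_n)_{ij}=(Z_n)_{ji}=\xi_{ij}$ and take all other entries of $Z_n$ to be zero. Since each lattice pair arises from exactly one ordered choice $(i,j)$ with $j$ to the right of $i$ (this uses $K_n<n$, and we may assume $K_n/2<n/2$ for large $n$ by \eqref{eq:assumption2}), $Z_n$ has exactly the distribution in Theorem~\ref{thm:coupling}. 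Next we define $\widetilde U_n:=A_n-Z_n$. Each retained lattice edge of $A_n$ is present in $Z_n$, so $\widetilde U_n$ is the adjacency matrix of the rewired edges. This is symmetric and $\{0,1\}$-valued, except where a rewired edge lands on a lattice pair that was itself rewired away or retained. In that case the entry can be $2$ or $-1$, but the identity $A_n=Z_n+\widetilde U_n$ still holds, which gives item~\eqref{item:AequalZU}.

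We next construct $U_n$ so that it is independent of $Z_n$ and close to $\widetilde U_n$. Conditionally on $Z_n$, the rewired edges are $N=\sum(1-\xi_{ij})$ edges, with $N\sim\binomial(nK_n/2,p_n)$. Each of them has one endpoint $i$ fixed and the other endpoint nearly uniform. We construct $U_n$ from an independent source, namely an Erdős–Rényi graph $G(n,p_nK_n/n)$ generated from fresh randomness. This makes item~\eqref{item:ZUditrib} automatic, and we then couple its edges with the rewired edges. Concretely, we generate $U_n$ by attaching to each vertex $i$ a number $M_i$ of new edges to uniformly chosen partners, matched in distribution to the rewired edges from $i$ (a Binomial$(K_n/2,p_n)$ count of edges, each to a uniformly random vertex). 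We then couple in two stages. First, for each vertex we maximally couple the count of rewirings at $i$ with the count needed to produce the Erdős–Rényi graph. Second, whenever both are present, we couple the chosen endpoints using the same uniform variable, accepting a mismatch only when the Watts--Strogatz rule forbids that endpoint (it is already adjacent to $i$, or equals $j$).

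To prove item~\eqref{item:secondsm}, note that $\frac1n\tr(D^2)=\frac1n\sum_{i,j}D_{ij}^2$ with $D=(U_n-\widetilde U_n)/\sqrt{p_nK_n}$. Because the entries are bounded by a constant, this is at most a constant times $\frac{1}{np_nK_n}$ multiplied by the expected number of edges on which $U_n$ and $\widetilde U_n$ disagree. We split the disagreements into three sources:
\begin{enumerate}
\item \emph{Excluded endpoints.} A uniform endpoint falls among the at most $K_n+1$ forbidden vertices with probability $O(K_n/n)$. Summing over the $\approx np_nK_n/2$ rewired edges gives $O(p_nK_n^2)$, which contributes $O(K_n/n)$ after normalisation.
\item \emph{Count mismatches.} The number of edges in $U_n$ is $\binomial(\binom n2,p_nK_n/n)$, with mean $\approx np_nK_n/2$ and standard deviation $O(\sqrt{np_nK_n})$, while the number of rewired edges has the same mean up to $O(p_nK_n)$. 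Coupling the total counts (and their split among vertices) optimally, the expected discrepancy is $O(\sqrt{np_nK_n}+p_nK_n)$ edges. This gives $O(1/\sqrt{np_nK_n})$, and certainly at most the stated $1/\sqrt{p_nK_n}$.
\item \emph{Collisions.} Multiple edges or collisions of a rewired edge with a lattice pair occur with probability $O(K_n/n)$ per edge, which contributes $O(K_n/n)$ again.
\end{enumerate}
Collecting the constants carefully should yield the factor $8$.

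The main obstacle is the second stage of the coupling. An Erdős–Rényi graph counts each edge $\{i,k\}$ once, whereas rewired edges are oriented from $i$, so the per-vertex structures do not match. The construction must therefore be done on unordered pairs. I would first try the following: realise $U_n$ as follows. Draw a $\poisson$-type or binomial total $N'$, place its edges uniformly among all pairs, and compare this with the rewired edges viewed as a multiset of pairs whose law is within $O(K_n/n)$ per edge of uniform. This requires checking that the law of a rewired endpoint $\{i,k\}$, with $i$ uniform over rewiring vertices, is close in total variation to uniform on pairs. A maximal coupling edge by edge, together with a coupling of $N$ and $N'$ as binomials with means differing by $O(p_nK_n)$, would then give the bound.
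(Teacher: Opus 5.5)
There is a genuine gap, at your ``count mismatch'' step. The rewiring counts are functions of $Z_n$: $r_i=\sum_j(1-\xi_{ij})$ (the number of right lattice edges of $i$ that were cut) and $N=\sum_i r_i$ are both $Z_n$-measurable. Item~(2), however, requires $U_n$ to be independent of $Z_n$. So you cannot maximally couple the rewiring count at $i$ with the Erd\H{o}s--R\'enyi count at $i$. Nor can you couple $N$ with the number of edges of $U_n$, or split the edges of $U_n$ among vertices to match the $r_i$. Each of these makes $U_n$ depend on $Z_n$, so ``fresh randomness'' and ``coupling to the rewiring counts'' cannot both hold.

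Your fallback fails for the same reason. Only the \emph{unconditional} law of a rewired pair $\{i,k\}$ is close to uniform on pairs. Conditionally on $Z_n$ its source $i$ is fixed, so the pair lives on $n-1$ of the $\binom{n}{2}$ pairs and is at total variation distance nearly $1$ from uniform. An edge-by-edge maximal coupling against the unconditional law therefore again correlates $U_n$ with $Z_n$.

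In fact no admissible coupling can achieve your estimate $O(\sqrt{np_nK_n}+p_nK_n)$; note that your resulting $O(1/\sqrt{np_nK_n})$ would beat the statement itself. All entries are $0$ or $1$: the values $2$ and $-1$ you worry about never occur, because a target must be non-adjacent to $i$ and retained lattice edges are present throughout. Hence $\nnz(U_n-\widetilde U_n)\ge\sum_i|\deg_{U_n}(i)-r_i-\mathrm{in}_i|$, where $\mathrm{in}_i$ counts rewired edges landing at $i$. Conditioning on $Z_n$, independence gives $\mathbb{E}[\deg_{U_n}(i)\mid Z_n]=(n-1)p_nK_n/n$, while $\mathbb{E}[\mathrm{in}_i\mid Z_n]=p_nK_n/2$ up to lower-order terms. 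So each summand has expectation at least about $\mathbb{E}|r_i-p_nK_n/2|$, which is of order $\sqrt{p_nK_n(1-p_n)}$. For $p_n$ bounded away from $1$ the total is of order $n\sqrt{p_nK_n}$, and this is exactly where the $1/\sqrt{p_nK_n}$ term in the statement comes from.

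What is missing is a construction in which the per-vertex counts are \emph{independent} and only their fluctuations are paid for. You also need a way to build an unordered-pair Erd\H{o}s--R\'enyi graph in the vertex order of the rewiring, which is the obstacle you flag but leave open. The paper does both as follows.
\begin{itemize}
\item At step $i$ it draws a candidate set $\mathcal C^i$ with inclusion probability $1-\sqrt{1-p_nK_n/n}$ and adds to $U_n$ the pairs $\{i,j\}$, $j\in\mathcal C^i$, not yet present.
\item Each pair thus gets two independent chances, hence probability exactly $p_nK_n/n$. Since $U_n$ depends only on the $\mathcal C^i$, it is independent of the coin flips defining $Z_n$.
\item Rewiring targets are taken in $W^{ij}\cap\mathcal C^i$ whenever that set is nonempty.
\item Tracking increments gives $\nnz(U_n-\widetilde U_n)\le\sum_i\bigl(4|\mathcal C^i\cap I^i|+2|u_i-r_i|\bigr)$, with $u_i=|E^i\cap\mathcal C^i|$ independent of $r_i$.
\item The bound $\mathbb{E}|u_i-r_i|\le\sqrt{\var(u_i)+\var(r_i)+(\mathbb{E}[u_i-r_i])^2}\le\sqrt{2p_nK_n+4p_n^2K_n^4/n^2}$ then yields the constant $8$.
\end{itemize}

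A caution if you adopt this: exchangeability makes the first target at $i$ exactly uniform on $W^{ij}$. Later targets are slightly biased toward lattice neighbours freed by earlier rewirings at $i$, because earlier choices carry information about $\mathcal C^i$. One repair is to decide first, with independent randomness and probability equal to the fraction of freed neighbours in $W^{ij}$, whether to pick uniformly among them. This restores exact uniformity at an extra cost of order $K_n/n$ after normalisation.
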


\begin{rk}\label{rk:muctomusc}
    Note that Proposition~\ref{prop:coupling} does not rely on any restrictive assumptions on $(K_n)_{n\ge1}$ or $(p_n)_{n\ge1}$. Therefore, if $K_np_n\underset{n\to\infty}{\longrightarrow} C\in(0,\infty)$, the second limiting spectral moment of $\frac{(Z_n+U_n) - (Z_n+\widetilde{U}_n)}{\sqrt{p_nK_n}}$ is bounded by $8/\sqrt{C}$, which goes to $0$ as $C\to\infty$. Therefore, since $U_n$ and $Z_n$ are generalized Wigner matrices, it is natural to conjecture that, if $Z_n+\widetilde{U}_n$ admits a limiting spectral distribution, this limit converges to $\mu_{\mathrm{SC}}$ as $C\to\infty$.
\end{rk}

We are now ready to prove Theorem \ref{thm:coupling}. 
\begin{proof}[Proof of Theorem \ref{thm:coupling}]
    Using part \eqref{item:secondsm} of Proposition~\ref{prop:coupling}, under assumptions \eqref{eq:assumption1} and \eqref{eq:assumption2} from Theorem~\ref{thm:coupling}, the second spectral moment of the difference between $\frac{{Z}_n+\widetilde{U}_n}{\sqrt{p_nK_n}}$ and $\frac{{Z}_n+{U}_n}{\sqrt{p_nK_n}}$ goes to $0$. Hence by standard bounds (see, for instance, \cite[Corollary A.41]{bai_silverstein_2010}, recalled as Theorem~\ref{thm:cor41} in Appendix~\ref{sec:tools}) and Markov's inequality
\[
    \mu_{\frac{Z_n+\widetilde{U}_n}{\sqrt{K_np_n}}}-\mu_{\frac{{Z}_n+{U}_n}{\sqrt{K_np_n}}}\toninfty0\qquad\mbox{ weakly in probability}.
\]
From Proposition \ref{prop:coupling}, $A _n = Z_n+\widetilde{U}_n$, and the claim follows.
\end{proof}

The rest of the section is devoted to the proof of Proposition \ref{prop:coupling}.  The proof is structured in two subsections. The first subsection gives the construction of $U_n$, $Z_n$, and $\widetilde{U}_n$. The second one proves, based on this construction, the properties claimed in Proposition~\ref{prop:coupling}.

\begin{figure}[htbp]
\centering
\resizebox{!}{0.95\textheight}{%
\begin{tikzpicture}[every node/.style={circle,draw,minimum size=0.7cm}]

\node[draw=none, anchor=east, font=\large] at (-3.3,7) {\underline{Initialization:}};
\begin{scope}[xshift=0cm,yshift=7cm]
\foreach \i in {1,...,8}
    \node (q\i) at ({90-45*(\i-1)}:2) {\i};

\node[draw=none, text width=7cm, align=center] at (0,-3.5) {$\mathcal{E}$, initially defined as an empty graph.};
\end{scope}

\begin{scope}[xshift=8cm,yshift=7cm]
\foreach \i in {1,...,8}
    \node (r\i) at ({90-45*(\i-1)}:2) {\i};

\foreach \i in {1,...,8}{
    \pgfmathtruncatemacro{\next}{mod(\i,8)+1}
    \pgfmathtruncatemacro{\nextnext}{mod(\i+1,8)+1}
    \draw (r\i) -- (r\next);
    \draw (r\i) -- (r\nextnext);
}

\node[draw=none, text width=7cm, align=center] at (0,-3.5) {$\mathcal{W}$, initially defined as a regular ring lattice.\\ Edges belonging to $\mathcal{Z}$ are solid, those belonging to $\mathcal{U}$ are dashed.};
\end{scope}

\node[draw=none, anchor=east, font=\large] at (-4,0) {\underline{Node 1:}};
\begin{scope}[xshift=0cm,yshift=0cm]
\foreach \i in {1,...,8}
    \node (q\i) at ({90-45*(\i-1)}:2) {\i};

\draw (q1) -- (q4);
\draw (q1) -- (q7);

\node[draw=none, text width=7cm, align=center] at (0,-3.5) {$\mathcal{C}^{1}=\{4,7\}$.\\
Nodes 4 and 7 have been selected as candidate neighbors, each with probability $1-\sqrt{1-\frac{p_n K_n}{n}}$.};
\end{scope}

\begin{scope}[xshift=8cm,yshift=0cm]
\foreach \i in {1,...,8}
    \node (r\i) at ({90-45*(\i-1)}:2) {\i};

\foreach \i in {1,...,8}{
    \pgfmathtruncatemacro{\next}{mod(\i,8)+1}
    \pgfmathtruncatemacro{\nextnext}{mod(\i+1,8)+1}
    \draw (r\i) -- (r\next);
    \draw (r\i) -- (r\nextnext);
}
\draw[white, thick=4pt] (r1) -- (r3);
\draw[dashed] (r1) -- (r4);

\node[draw=none, text width=7cm, align=center] at (0,-3.5) {Edge $\{1,2\}$ is kept (probability $(1-p)$), while $\{1,3\}$ is cut off (probability $p$).\\The replacing neighbor $4$ is selected uniformly at random from $\{4,5,6\}\cap\mathcal{C}^{1}$.};
\end{scope}

\node[draw=none, anchor=east, font=\large] at (-4,-7) {\underline{Node 2:}};
\begin{scope}[xshift=0cm,yshift=-7cm]
\foreach \i in {1,...,8}
    \node (q\i) at ({90-45*(\i-1)}:2) {\i};

\draw (q1) -- (q2);
\draw (q1) -- (q4);
\draw (q1) -- (q7);
\draw (q2) -- (q5);

\node[draw=none, text width=7cm, align=center] at (0,-3.5) {$\mathcal{C}^{2} = \{1,5\}$.\\
Edge $\{1,2\}$ has now had two opportunities to be added to $\mathcal{E}$, with total probability $\frac{p_nK_n}{n}$.};
\end{scope}

\begin{scope}[xshift=8cm,yshift=-7cm]
\foreach \i in {1,...,8}
    \node (r\i) at ({90-45*(\i-1)}:2) {\i};

\foreach \i in {1,...,8}{
    \pgfmathtruncatemacro{\next}{mod(\i,8)+1}
    \pgfmathtruncatemacro{\nextnext}{mod(\i+1,8)+1}
    \draw (r\i) -- (r\next);
    \draw (r\i) -- (r\nextnext);
}
\draw[white, thick=4pt] (r1) -- (r3);
\draw[dashed] (r1) -- (r4);

\draw[white, thick=4pt] (r2)--(r3);
\draw[white, thick=4pt] (r2)--(r4);

\draw[dashed] (r2)--(r5);
\draw[dashed] (r2)--(r7);

\node[draw=none, text width=7cm, align=center] at (0,-3.5) {Edge $\{2,3\}$ is cut off and rewired to $\{2,5\}$. Edge $\{2,4\}$ is also removed; since the only remaining candidate is $1$, already adjacent to $2$, it is instead rewired to $7$, chosen uniformly at random from $\{3,6,7\}$.};
\end{scope}

\end{tikzpicture}
}

\caption{Illustration of the graphs $\mathcal{E}$ (left) and $\mathcal{W}$ (right) during the construction for $n=8$ and $K_n=4$.}
\label{fig:construction}
\end{figure}

\subsection{Construction of $(U_n,Z_n,\widetilde{U}_n)$}
The aim of this subsection is to construct the three matrices $U_n$, $Z_n$, and $\widetilde{U}_n$ introduced in Proposition~\ref{prop:coupling}. For ease of notation, we omit the index $n$ when it is clear from the context. For instance, $U_n$, $Z_n$, and $\widetilde{U}_n$ will be denoted $U$, $Z$, and $\widetilde{U}$, respectively.

The construction procedure, described below, produces two random graphs, $\mathcal{W}$ and $\mathcal{E}$, defined on the vertex set $\{1,\dots,n\}$. The graph $\mathcal{W}$ is constructed as a Watts--Strogatz random graph (following Definition~\ref{def:wattsstrogatz}) and, accordingly, will be initialized as a regular ring lattice. $\mathcal{E}$ is constructed as an Erd\H{o}s--R\'enyi random graph of size $n$ with parameter $\frac{p_nK_n}{n}$ (in other words, each edge will independently be present with probability $\frac{p_nK_n}{n}$). It will be initialized as the empty graph on $n$ vertices.

Additionally, we will define $\mathcal{Z}$ as an evolving subgraph of $\mathcal{W}$ that contains all vertices and all initial edges of $\mathcal{W}$. At each step of the construction, whenever an edge is removed from $\mathcal{W}$, it is also removed from $\mathcal{Z}$. We will also define $\mathcal{U}$ as an evolving subgraph of $\mathcal{W}$ that contains all vertices of $\mathcal{W}$ and whose edge set, initially empty, consists of all edges added during the construction. Hence, $\mathcal{Z}$ only loses edges, while $\mathcal{U}$ only gains edges, so that at every step of the construction, the edge set of $\mathcal{W}$ is the disjoint union of those of $\mathcal{Z}$ and $\mathcal{U}$.

The following construction of $\mathcal{W}$ and $\mathcal{E}$ will couple $\mathcal{E}$ and $\mathcal{U}$ in a way that will ensure that the number of edges that differ stays controllable.

Namely, we define the three matrices $U$, $Z$ and $\widetilde{U}$ as the adjacency matrices of $\mathcal{E}$, $\mathcal{Z}$ and $\mathcal{U}$, respectively. The construction induces a coupling between $U$ and $\widetilde{U}$ that ensures that
\[
    \nnz(U-\widetilde{U})
\]is bounded sufficiently in order to prove claim~\ref{item:secondsm} of Proposition~\ref{prop:coupling}.

Finally, we will denote by $U^{(i)}$, $Z^{(i)}$ and $\widetilde{U}^{(i)}$ the states of the matrices $U$, $Z$ and $\widetilde{U}$ after processing node $i$. Therefore, at the end of the construction, we will have
\[
    U=U^{(n)},\qquad Z = Z^{(n)}\quad\mbox{ and }\quad\widetilde{U} = \widetilde{U}^{(n)}.
\]This notation is introduced to keep track of a bound on
\[
    \nnz(U^{(i)}-\widetilde{U}^{(i)})-\nnz(U^{(i-1)}-\widetilde{U}^{(i-1)}),
\]which will be useful to derive the desired bound on $\nnz(U-\widetilde{U})$.

Throughout the construction, for the sake of clarity, when writing $\{i,j\}$ or $ij$ for an edge or a matrix entry, we will abusively allow $i$ or $j$ to be greater than $n$, in which case it has to be understood as the corresponding integer modulo $n$ in $\{1,\dots,n\}$. 

\textbf{Initialization.} \textit{(This step is illustrated in the Initialization part of Figure~\ref{fig:construction}.)} Let $\mathcal{W}$ and $\mathcal{E}$ be initialized as two graphs on the vertex set $\{1,\dots,n\}$, $\mathcal{W}$ as a regular ring lattice, that is, a graph with $n$ nodes, each connected to its $K$ closest neighbors ($K/2$ on each side), and $\mathcal{E}$ a graph with no edges. We also initialize $\mathcal{Z}$ and $\mathcal{U}$ as described above, that is: $\mathcal{Z}$ has the same node set and the same edge set as $\mathcal{W}$, while $\mathcal{U}$ has the same node set, but no edges.

Let $Z^{(0)}$, $\widetilde{U}^{(0)}$, and $U^{(0)}$ denote the adjacency matrices of $\mathcal{Z}$, $\mathcal{U}$, and $\mathcal{E}$ at this stage, respectively. In particular, $Z^{(0)}$ has entries equal to $0$ everywhere except in the extended $K$-band\footnote{We call the extended $K$-band of a matrix $M$ of size $n > K$ the set of entries $(i,j)$ such that
\[
    0 < \min\left(|i-j|,\, n - |i-j|\right) \le \frac{K}{2}.
\]Note that the extended $K$-band includes entries in the top-right and bottom-left corners.}, where its entries are equal to $1$. Moreover, we have
\[
    \widetilde{U}^{(0)} = U^{(0)} = 0.
\]

\textbf{Construction.} \textit{(This step is illustrated for Nodes $1$ and $2$ in Figure~\ref{fig:construction}.)} For each $i \in \{1,\dots,n\}$:

\begin{enumerate}
    \item Initially set
    \[
        U^{(i)} = U^{(i-1)}, \quad \widetilde{U}^{(i)} = \widetilde{U}^{(i-1)}, \quad Z^{(i)} = Z^{(i-1)}.
    \]
    \item \label{item:defineC} Define $\mathcal{C}^i$ to be the random set of candidate new neighbors of node $i$, obtained by including each node $j \in \{1,\ldots,n\}\setminus\{i\}$ independently with probability
    \[
        1-\sqrt{1-\frac{p_nK_n}{n}}.
    \]This choice of probability will ensure that every edge in $\mathcal{E}$ has an overall probability of
    \[
        \frac{p_nK_n}{n}
    \]to be present. This will be made more explicit below, in the proof of Proposition~\ref{prop:coupling}.
    \item \label{item:updateU} Define $E^i$ to be the set of nodes that are not adjacent to $i$ in $\mathcal{E}$ at this stage. For all $j\in E^i\cap\mathcal{C}^i$, add the edge $\{i,j\}$ in $\mathcal{E}$. Equivalently, for all such $j$, set
    \[
        U^{(i)}_{ij}\leftarrow1\quad\mbox{ and }\quad U^{(i)}_{ji}\leftarrow 1.
    \]
    \item\label{item:couplRewiring} This step follows step~\ref{item:defWS2} in Definition~\ref{def:wattsstrogatz}. For each $j\in\{1,\dots,K/2\}$, taken in increasing order, independently with probability $p_n$, rewire the edge $e_j=\{i,(i+j)\}$ in $\mathcal{W}$ as follows:
    \begin{enumerate}
        \item \label{item:updateZ} Remove the edge $e_j$ from $\mathcal{W}$ (and therefore from $\mathcal{Z}$). In other words, set
        \[
            Z^{(i)}_{i,(i+j)}\leftarrow0\quad\mbox{ and }\quad Z^{(i)}_{(i+j),i}\leftarrow0.
        \]
        \item\label{item:rewiring} Define $W^{ij}$ to be the set of vertices in $\{1,\dots,n\}\setminus\{i,(i+j)\}$ that are not adjacent to $i$ in $\mathcal{W}$ at this stage. Choose the index $k$ uniformly at random from $W^{ij}$ following those steps:
        \begin{enumerate}
            \item\label{item:choosekfromC} If $W^{ij}\cap\mathcal{C}^i\ne\emptyset$, choose $k$ uniformly at random from $W^{ij}\cap\mathcal{C}^i$.
            \item\label{item:choosekfromW} Otherwise, choose $k$ uniformly at random from $W^{ij}$. 
        \end{enumerate}Since every node has the same probability to be in $\mathcal{C}^i$, $k$ is indeed chosen uniformly at random from $W^{ij}$.
        \item\label{item:couplUTildaUpdate} Add edge $\{i,k\}$ to $\mathcal{W}$ (and therefore to $\mathcal{U}$). In other words, set
        \[
            \widetilde{U}^{(i)}_{ik}\leftarrow1\quad\mbox{ and }\quad \widetilde{U}^{(i)}_{ki}\leftarrow 1.
        \]
    \end{enumerate}
\end{enumerate}

\textbf{Invariant.} For all $i\in\{1,\dots,n\}$, we define the evolving quantity 
\[
    N^i = \nnz(U^{(i)}-\widetilde{U}^{(i)}) - \nnz(U^{(i-1)}-\widetilde{U}^{(i-1)}).
\] 
We now go through each step of the construction, bounding systematically the increments of $N^i$.
\begin{enumerate}
    \item Here, we have $N^i=0$.
    \item Unchanged.
    \item For all $e\in\mathcal{C}^i\cap E^i$, then, when adding $e$ to $\mathcal{E}$, we have:
    \begin{itemize}
        \item If $e$ is not present in $\mathcal{U}$, then $N^i$ increases by two.
        \item If $e$ is present in $\mathcal{U}$, then $N^i$ decreases by two.
    \end{itemize}Let $V^i$ denote the set of nodes that are not adjacent to $i$ in $\mathcal{U}$ at this step. There are $|E^i\cap\mathcal{C}^i\cap V^i|$ iterations of the first case and $|E^i\cap\mathcal{C}^i\cap (V^i)^c|$ of the second (where $(V^i)^c$ denotes the complement of $V^i$ in $\{1,\dots,n\}$). Therefore, $N^i$ increases by
    \[
        2|E^i\cap\mathcal{C}^i\cap V^i|-2|E^i\cap\mathcal{C}^i\cap (V^i)^c|.
    \]
    \item For all $j\in\{1,\dots,K/2\}$,\begin{enumerate}
        \item Unchanged.
        \item We consider that step~\ref{item:couplUTildaUpdate} (updating $\widetilde{U}$) is executed at the same time.
        \begin{enumerate}
            \item If $|W^{ij}\cap\mathcal{C}^i|\ge1$, $N_i$ decreases by $2$, because every node in $\mathcal{C}^i$ is connected to node $i$ in $\mathcal{E}$.
            \item If $|W^{ij}\cap\mathcal{C}^i|=0$, $N_i$ increases by at most $2$.
        \end{enumerate}
        \item This step was taken into account in the previous one.
    \end{enumerate}Let $r_i$ denote the number of rewiring events that occurred for node $i$ (or, equivalently, the number of executions of step~\ref{item:rewiring}). Notice that step~\ref{item:choosekfromC} is executed at least $\min\{|W^{i1}\cap\mathcal{C}^i|,r_i\}$ times, because the number of candidates not adjacent to $i$ decreases by at most $1$ every time. Therefore, step~\ref{item:choosekfromW} is executed at most $\max\{r_i-|W^{i1}\cap\mathcal{C}^i|,0\}$.
\end{enumerate}
At the end of the construction, we have, for all $i\in\{1,\dots,n\}$,
\begin{align*}
    \nnz(U^{(i)}-\widetilde{U}^{(i)}) \le &\nnz(U^{(i-1)}-\widetilde{U}^{(i-1)})\\
    &+2|E^i\cap\mathcal{C}^i\cap V^i|-2|E^i\cap\mathcal{C}^i\cap (V^i)^c|\\
    &- 2\min\{|W^{i1}\cap\mathcal{C}^i|,r_i\}\\
    &+2\max\{r_i-|W^{i1}\cap\mathcal{C}^i|,0\}.
\end{align*}Consequently, if $r_i\le|W^{i1}\cap\mathcal{C}^i|$,
\begin{align*}
    \nnz(U^{(i)}-\widetilde{U}^{(i)}) &\le \nnz(U^{(i-1)}-\widetilde{U}^{(i-1)}) +2|E^i\cap\mathcal{C}^i\cap V^i|-2|E^i\cap\mathcal{C}^i\cap(V^i)^c|- 2r_i\\
    &\le\nnz(U^{(i-1)}-\widetilde{U}^{(i-1)}) + 2||E^i\cap\mathcal{C}^i|-r_i|
\end{align*}and, if $r_i>|W^{i1}\cap\mathcal{C}^i|$,
\begin{align*}
    \nnz(U^{(i)}-\widetilde{U}^{(i)}) &\le\nnz(U^{(i-1)}-\widetilde{U}^{(i-1)}) +2|E^i\cap\mathcal{C}^i\cap V^i|-2|E^i\cap\mathcal{C}^i\cap(V^i)^c|\\
    &\hspace{2cm}- 2|W^{i1}\cap\mathcal{C}^i|+2(r_i-|W^{i1}\cap\mathcal{C}^i|).
\end{align*}In that second case, we need to compare $V^i$ with $W^{i1}$. Recall that they are equal to the sets of nodes not adjacent to $i$ in $\mathcal{U}$ and $\mathcal{W}$, respectively, when those graphs are considered at the beginning of step~\ref{item:couplRewiring}. Now, since the only edges that differ between those graphs belong to $\mathcal{Z}$, which is a subgraph of the regular ring lattice, we have
\[
    V^i\subset W^{i1}\cup I^i,
\]where $I^i$ denotes the set of initial neighbors of $i$ in $\mathcal{W}$ (in other words, the neighbors of $i$ in the regular ring lattice). In particular,
\[
    |E^i\cap \mathcal{C}^i\cap V^i|\le |\mathcal{C}^i\cap I^i|+|E^i\cap \mathcal{C}^i\cap W^{i1}|
\]and
\[
    |E^i\cap \mathcal{C}^i\cap (V^i)^c|\ge|E^i\cap \mathcal{C}^i\cap (W^{i1})^c|-|\mathcal{C}^i\cap I^i|.
\]Therefore, in the case $r_i>|W^{i1}\cap\mathcal{C}^i|$, we have
\begin{align*}
    \nnz(U^{(i)}-\widetilde{U}^{(i)}) &\le\nnz(U^{(i-1)}-\widetilde{U}^{(i-1)})\\
    &\hspace{2cm}+4|\mathcal{C}^i\cap I^i|+2|E^i\cap\mathcal{C}^i\cap W^{i1}|-2|E^i\cap\mathcal{C}^i\cap(W^{i1})^c|\\
    &\hspace{2cm}- 2|W^{i1}\cap\mathcal{C}^i|+2(r_i-|W^{i1}\cap\mathcal{C}^i|).
\end{align*}Additionally, note that $|E^i\cap\mathcal{C}^i\cap W^{i1}|-|W^{i1}\cap\mathcal{C}^i|\le 0$ and
\[
    |W^{i1}\cap\mathcal{C}^i|+|E^i\cap\mathcal{C}^i\cap (W^{i1})^c|\ge|E^i\cap\mathcal{C}^i|.
\]

Therefore, in both cases, we have the following invariant
\[
    \nnz(U^{(i)}-\widetilde{U}^{(i)})\le\nnz(U^{(i-1)}-\widetilde{U}^{(i-1)})+4|\mathcal{C}^i\cap I^i|+2||E^i\cap\mathcal{C}^i|-r_i|,
\]from which we deduce
\begin{equation}\label{eq:inv}
    \nnz(U-\widetilde{U})\le\sum_{i=1}^n(4|\mathcal{C}^i\cap I^i|+2||E^i\cap\mathcal{C}^i|-r_i|).
\end{equation}

\subsection{Conclusion of the proof of Proposition \ref{prop:coupling}}\label{subsec:proofsconstructionprop}

With the construction of $(Z_n, U_n, \widetilde{U}_n)$ above, we can now complete the proof of Proposition~\ref{prop:coupling}.

\begin{proof}[Proof of Proposition~\ref{prop:coupling}]
We verify the three claims separately.

\noindent\textit{Proof of (\ref{item:AequalZU}).} By construction, $Z_n+\widetilde{U}_n$ is the adjacency matrix of $\mathcal{W}$. It is clear that the construction of $\mathcal{W}$ follows exactly the same steps as in Definition~\ref{def:wattsstrogatz}.\\

\noindent\textit{Proof of (\ref{item:ZUditrib}).} First, $Z^{(0)}$ is initialized as the adjacency matrix of a regular ring lattice. In other words, it has $0$s everywhere except in the extended $K$-band, where it has $1$s. Every edge of this regular ring lattice is removed independently with probability $p_n$ during step~\ref{item:updateZ}.

$U^{(0)}$ is initialized as an empty matrix. It is updated only during step~\ref{item:updateU}. Every entry $ij$ in the upper half has two opportunities to be switched to $1$: when processing node $i$ and node $j$. Every time, it is switched to $1$ with probability
\[
    1-\sqrt{1-\frac{p_nK_n}{n}}.
\]Therefore, it has an overall probability of
\[
    1-\left(1-\left(1-\sqrt{1-\frac{p_nK_n}{n}}\right)\right)^2 = \frac{p_nK_n}{n}
\]to be switched to $1$. Finally, notice that all entries in the upper half are switched to $1$ independently of one another.\\

\noindent\textit{Proof of (\ref{item:secondsm}).} For ease of notation, we omit the index $n$ when it is clear from the context.

First, note that since entries in $U$ and $\widetilde{U}$ are in $\{0,1\}$,
\[
    \frac{1}{n}\tr\left(\frac{U-\widetilde{U}}{\sqrt{pK}}\right)^2 = \frac{1}{npK}\nnz(U-\widetilde{U}),
\]to which we can apply the inequality~\eqref{eq:inv} from the coupling construction:
\[
    \frac{1}{n}\tr\left(\frac{U-\widetilde{U}}{\sqrt{pK}}\right)^2 \le \frac{1}{npK}\sum_{i=1}^n(4|\mathcal{C}^i\cap I^i|+2||E^i\cap\mathcal{C}^i|-r_i|),
\]where we recall that $r_i$ is the number of $K/2$ rightmost edges from node $i$ that were rewired at step~\ref{item:updateZ} (switched to $0$ in $Z$), that $\mathcal{C}^i$ is the set of candidate new neighbors of $i$, defined at step~\ref{item:defineC}, that $I^i$ is the set of nodes that are neighbors of $i$ in the regular ring lattice, and that $E^i$ is the set of nodes that are not adjacent to $i$ in $\mathcal{E}$ before processing node $i$.

For all $i\in\{1,\dots,n\}$, set $u_i=|E^i\cap\mathcal{C}^i|$ and $a_i=|\mathcal{C}^i\cap I^i|$.

With those notations, the invariant identity reduces to
\[
    \nnz(U-\widetilde{U})\le\sum_{i=1}^n(4a_i+2|u_i-r_i|).
\]

We now need the following lemma.
\begin{lemma}\label{lm:didistribution} For all $i\in\{1,\dots,n\}$, $u_i$ and $r_i$ are independent, and
    \[
        \var(r_i)\le pK,\qquad|\mathbb{E}[u_i-r_i]|\le\frac{2pK^2}{n},\qquad\var(u_i)\le pK\quad\mbox{ and }\quad\mathbb{E}[a_i]\le \frac{pK^2}{n}.
    \]
\end{lemma}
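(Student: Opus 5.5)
The plan is to identify the exact distributions of $r_i$, $u_i$ and $a_i$ from the construction, and then do elementary computations with binomial random variables.

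\emph{Independence and the law of $r_i$.} By step~\ref{item:couplRewiring}, for node $i$ each of the $K/2$ rightmost edges is rewired independently with probability $p$, using coins that are fresh and used nowhere else. Hence $r_i\sim\binomial(K/2,p)$, so $\mathbb{E}[r_i]=pK/2$ and $\var(r_i)=\frac K2 p(1-p)\le pK$. The quantity $u_i=|E^i\cap\mathcal{C}^i|$ depends only on $\mathcal{C}^i$ and on the state of $\mathcal{E}$ before node $i$ is processed. That state is a function of $\mathcal{C}^1,\dots,\mathcal{C}^{i-1}$ only, since $\mathcal{E}$ is updated solely in step~\ref{item:updateU}. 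Neither of these involves the rewiring coins of node $i$, so $u_i$ and $r_i$ are independent.

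\emph{Law of $u_i$.} Write $q=1-\sqrt{1-pK/n}$. Since $1-\sqrt{1-x}\le x$ for $x\in[0,1]$, we have $q\le pK/n$, and also $q\ge pK/(2n)$. Conditionally on $\mathcal{C}^1,\dots,\mathcal{C}^{i-1}$, the set $E^i$ is fixed, and $\mathcal{C}^i$ is an independent $q$-sample of $\{1,\dots,n\}\setminus\{i\}$. Therefore $u_i\mid E^i\sim\binomial(|E^i\setminus\{i\}|,q)$. The conditional variance is at most $nq\le pK$. For the variance of the conditional mean, $|E^i\setminus\{i\}|=n-1-d_i$, where $d_i$ is the degree of $i$ in $\mathcal{E}$ before step $i$, i.e. the number of $j<i$ with $i\in\mathcal{C}^j$. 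Thus $d_i\sim\binomial(i-1,q)$, and $q^2\var(d_i)\le q^3 n\le pK\cdot(pK/n)^2$. By the law of total variance, $\var(u_i)\le nq(1-q)+q^2 nq$. To get the clean bound $\var(u_i)\le pK$, one can instead note that $u_i$ is exactly a sum of independent Bernoullis: for each $j\ne i$, the indicator $\mathbf{1}\{j\in\mathcal{C}^i,\ i\notin\mathcal{C}^j\text{ if }j<i\}$ has the joint coins independent across $j$. So $u_i$ is a sum of $n-1$ independent Bernoullis with parameters at most $q$, and $\var(u_i)\le (n-1)q\le pK$. This is the route I would take.

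\emph{Mean comparison.} From the same representation, $\mathbb{E}[u_i]=(n-1)q-(i-1)q^2$, whereas $\mathbb{E}[r_i]=pK/2$. Expanding, $q=\frac{pK}{2n}+\theta$ with $0\le\theta\le (pK/n)^2/2$, using $1-\sqrt{1-x}\le x/2+x^2/2$ for $x\in[0,1]$. Then
\[
|\mathbb{E}[u_i]-pK/2|\le q+n\theta+nq^2\le \frac{pK}{n}+\frac{p^2K^2}{2n}+\frac{p^2K^2}{n}.
\]
Since $p\le1$ and $1\le K$, this is at most $2pK^2/n$ once one checks $pK/n\le pK^2/(2n)$ and $\tfrac32 p^2K^2/n\le \tfrac32 pK^2/n$. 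This forces some slack bookkeeping: the constant $2$ is the delicate point, and I expect this computation to be the only step requiring care. If the constant does not close, I would sharpen $1-\sqrt{1-x}\le x/2+x^2/4\cdot(1-x)^{-1/2}$, or handle $K=2$ separately.

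\emph{The bound on $a_i$.} We have $|I^i|=K$, and each element of $I^i$ lies in $\mathcal{C}^i$ with probability $q\le pK/n$, so $\mathbb{E}[a_i]=Kq\le pK^2/n$.
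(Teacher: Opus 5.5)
Your proposal is correct and follows the paper's proof: $r_i\sim\binomial(K/2,p)$ from fresh coins, $u_i$ a sum of independent Bernoullis with parameters $q$ for $j>i$ and $q(1-q)$ for $j<i$ (in your notation $q=1-\sqrt{1-pK/n}$), $a_i\sim\binomial(K,q)$, then elementary bounds on $1-\sqrt{1-x}$. The one difference is that the paper bounds the mean gap by checking the endpoints $i=1$ and $i=n$ (since $\mathbb{E}[u_i]$ is affine in $i$) rather than bounding $(i-1)q^2\le nq^2$ directly; also, your constant closes without any sharpening, because the step $pK/n\le pK^2/(2n)$ only needs $K\ge 2$, which holds since $K$ is a positive even integer (you wrote $1\le K$).
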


We defer the proof of this lemma for now and proceed to prove claim~\ref{item:secondsm} of Proposition~\ref{prop:coupling}.
We have
\begin{align*}
    \mathbb{E}[\nnz(U-\widetilde{U})]&\le4\mathbb{E}\left[\sum_{i=1}^na_i\right]+2\sum_{i=1}^n\mathbb{E}[|u_i-r_i|]\\
    &\le4pK^2+2\sum_{i=1}^n\sqrt{\mathbb{E}[(u_i-r_i)^2]}\\
    &\le4pK^2+2\sum_{i=1}^n\sqrt{\var(u_i)+\var(r_i)+(\mathbb{E}[u_i-r_i])^2}\\
    &\le4pK^2+2\sum_{i=1}^n\sqrt{2pK+4p^2\frac{K^4}{n^2}}=4pK^2+2n\sqrt{2pK+4p^2\frac{K^4}{n^2}}\\
    &\le 8(pK^2+n\sqrt{pK}),
\end{align*}where the second inequality holds by the Jensen inequality, the third by the independence of $u_i$ and $r_i$, the fourth inequality by Lemma~\ref{lm:didistribution}, and the last one by the fact that $\sqrt{a+b}\le\sqrt{a}+\sqrt{b}$ for all $a,b\ge0$. Now,
\begin{align*}
    \mathbb{E}\left[\frac{1}{n}\tr\left(\frac{U-\widetilde{U}}{\sqrt{pK}}\right)^2\right]&=\frac{1}{npK}\mathbb{E}\left[\nnz(U-\widetilde{U})\right]\\
    &\le\frac{8(pK^2+n\sqrt{pK})}{npK}\\
    &\le 8\left(\frac{K}{n}+\frac{1}{\sqrt{pK}}\right).
\end{align*}

It remains to prove Lemma \ref{lm:didistribution}.
\begin{proof}[Proof of Lemma~\ref{lm:didistribution}] Since each of the $K/2$ edges connecting $i$ to its $K/2$ rightmost neighbors is cut off independently with probability $p$, it is independent of $u_i$ and
\[
    r_i=\binomial(K/2,p).
\]This proves $\var(r_i)\le pK$.

We now compute the distribution of $u_i$. Notice that this distribution highly depends on $i$. For ease of notation, we write $q=\frac{pK}{n}$.

Fix $i\in\{1,\dots,n\}$. In $\mathcal{E}$, before step~\ref{item:updateU} when processing node $i$, node $i$ is only adjacent to nodes of smaller indices. Therefore, for all $j>i$,
\[
    \mathbb{P}(j\in E^i\cap\mathcal{C}^i) = \mathbb{P}(j\in\mathcal{C}^i)=1-\sqrt{1-q}.
\]Additionally, for all $j\in\{1,\dots,i-1\}$, the edge $\{j,i\}$ was added with probability $1-\sqrt{1-q}$ when processing node $j$, and hence the two events $(j\in E^i)$ and $(j\in\mathcal{C}^i)$ are independent and have probability $\sqrt{1-q}$ and $1-\sqrt{1-q}$, respectively.

We obtain
\begin{equation}\label{eq:didistrib}
    u_i=\binomial((i-1),\sqrt{1-q}(1-\sqrt{1-q}))+\binomial((n-i),1-\sqrt{1-q})
\end{equation}where the sum denotes the sum of two independent random variables, each following the specified distribution. Now,
\[
    \mathbb{E}[u_i] = (i-1)\sqrt{1-q}(1-\sqrt{1-q})+(n-i)(1-\sqrt{1-q}).
\]Therefore, $\mathbb{E}[u_i]$ belongs to the segment $[\mathbb{E}[u_1],\mathbb{E}[u_n]]$. Since $\mathbb{E}[r_i] = \frac{Kp}{2}$,
\[
    |\mathbb{E}[u_i-r_i]|=\left|\mathbb{E}[u_i]-\frac{Kp}{2}\right|\le\max\left\{\left|\mathbb{E}[u_1]-\frac{Kp}{2}\right|,\left|\mathbb{E}[u_n]-\frac{Kp}{2}\right|\right\}.
\]
Now, using the two identities $\sup_{x\in[0,1]}\frac{|1-\sqrt{1-x}-\frac x2|}{x^2}\le 1$ and $\sup_{x\in[0,1]}\frac{|1-\sqrt{1-x}|}{x}\le1$ on the third line, we get
\begin{align*}
    \left|\mathbb{E}[u_1]-\frac{pK}{2}\right| &= \left|(n-1)\left(1-\sqrt{1-q}\right)-n\frac{q}{2}\right| \\
    &\le n\left|\left(1-\sqrt{1-q}\right)-\frac{q}{2}\right|+\left|1-\sqrt{1-q}\right|\\
    &\le nq^2+q\\
    &\le 2p\frac{K^2}{n}.
\end{align*}
Similarly, using the same identities on the fourth line, we have
\begin{align*}
    \left|\mathbb{E}[u_n]-\frac{Kp}{2}\right|&=\left|(n-1)\left(\sqrt{1-q}\left(1-\sqrt{1-q}\right)\right)-n\frac{q}{2}\right|\\
    &\le n\left|\sqrt{1-q}(1-\sqrt{1-q})-\frac{q}{2}\right|+\sqrt{1-q}\left(1-\sqrt{1-q}\right)\\
    &=n\left|\sqrt{1-q}-1+\frac{q}{2}\right|+\sqrt{1-q}\left(1-\sqrt{1-q}\right)\\
    &\le nq^2+q\\
    &\le2p\frac{K^2}{n}.
\end{align*}
We have shown so far that $|\mathbb{E}[u_i-r_i]|\le\frac{2pK^2}{n}$. We now show that $\var(u_i)\le pK$. First, for ease of notation, set
\[
    B^1=\bernoulli(\sqrt{1-q}(1-\sqrt{1-q}))\quad\mbox{ and }\quad B^2=\bernoulli(1-\sqrt{1-q}),
\]so that, by \eqref{eq:didistrib},
\begin{equation}\label{eq:varui}
    \var(u_i)=(i-1)\var(B^1)+(n-i)\var(B^2).
\end{equation}Notice that, using the identity $\sup_{x\in[0,1]}\frac{|1-\sqrt{1-x}|}{x}=1$ on the last line, we have
\begin{align*}
    \var(B^1) &= \sqrt{1-q}(1-\sqrt{1-q})(1-\sqrt{1-q}(1-\sqrt{1-q}))\\
    &\le(1-\sqrt{1-q})\\
    &\le q =\frac{Kp}{n},
\end{align*}and, similarly, using the same identity on the third line, we obtain
\begin{align*}
    \var(B^2) &=\sqrt{1-q}(1-\sqrt{1-q})\\
    &\le(1-\sqrt{1-q})\\
    &\le q =\frac{Kp}{n}.
\end{align*}Using those two bounds in~\eqref{eq:varui} concludes the claim.

Finally, since $a_i=|\mathcal{C}^i\cap I^i|$ and $|I^i| = K$, we have
\[
    a_i=\binomial\left(K,1-\sqrt{1-\frac{pK}{n}}\right),
\]and hence, using again the identity $\sup_{x\in[0,1]}\frac{|1-\sqrt{1-x}|}{x}=1$, we have
\[
    \mathbb{E}\left[a_i\right]=K\left(1-\sqrt{1-\frac{Kp}{n}}\right)\le\frac{pK^2}{n}.
\]
\end{proof}
This concludes the proof of Lemma~\ref{lm:didistribution}, and therefore the proof of claim~\ref{item:secondsm} of Proposition~\ref{prop:coupling}.
\end{proof}

\section{Proof of Theorem~\ref{thm:mainresult}}\label{sec:proofmainresult}
\subsection{Preliminaries} Before delving into the proof of Theorem \ref{thm:mainresult}, we need the following lemma about the adjacency matrix of a regular ring lattice of degree $K$ on $n$ vertices.  For ease of notation, let $D_{n,K}$ be the adjacency matrix of such a regular ring lattice.

\begin{lemma}\label{lm:circulantmatrix}
    For all $n\ge1$ and even integers $K<n$, the eigenvalues of $D_{n,K}$ are
    \[
        \lambda_j = \begin{cases}
            \frac{\sin((K+1)\pi \frac jn)}{\sin(\pi \frac jn)}-1&\quad\mbox{ for } j\in\{1,\dots,n-1\},\\
            K&\quad\mbox{ for }j=n.
        \end{cases}
    \]
    Furthermore, for all sequences $(K_n)_{n\ge1}$ of even integers that satisfy $2\le K_n<n$ for all sufficiently large values of $n$ and a sequence $(p_n)_{n\ge1}$ in $(0,1]$, under the assumption~\eqref{eq:assumption1}, there exist two matrices $D^{(1)}_{n,K_n}$ and $D^{(2)}_{n,K_n}$ such that
    \begin{gather}\label{eq:dndecomposition1}
        D_{n,K} = D^{(1)}_{n,K_n} + D^{(2)}_{n,K_n},\\\label{eq:dndecomposition2}
        \operatorname{rank}\left(\frac{D^{(1)}_{n,K_n}}{\sqrt{p_nK_n}}\right)\underset{n\to\infty}{=}o(n),\\\label{eq:dndecomposition3}
        \left\|\frac{D^{(2)}_{n,K_n}}{\sqrt{p_nK_n}}\right\|\toninfty0,
    \end{gather}where $\|\cdot\|$ denotes the spectral norm of a matrix. It follows that the empirical spectral measure $\mu_{\frac{D_{n,K_n}}{\sqrt{p_nK_n}}}$ of $\frac{D_{n,K_n}}{\sqrt{p_nK_n}}$ converges weakly to the Dirac mass $\delta_0$ as $n \to \infty$. 
\end{lemma}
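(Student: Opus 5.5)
The plan has three steps: compute the spectrum of the circulant matrix exactly, use it to split $D_{n,K}$ into a low-rank part plus a part of small norm, and then invoke the standard rank and norm perturbation bounds.

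\textbf{Eigenvalues.} $D_{n,K}$ is circulant, generated by the vector $c$ with $c_m=1$ exactly when $0<\min(m,n-m)\le K/2$. Its eigenvectors are the Fourier vectors $v_j=(\omega^{jk})_{k}$ with $\omega=e^{2\pi i/n}$, and the eigenvalue attached to $v_j$ is
\[
\lambda_j=\sum_{0<|m|\le K/2}\omega^{jm}=\sum_{m=-K/2}^{K/2}e^{2\pi i jm/n}-1 .
\]
This is a Dirichlet kernel, so
\[
\lambda_j=\frac{\sin((K+1)\pi j/n)}{\sin(\pi j/n)}-1
\]
for $j\not\equiv 0 \pmod n$. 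For $j=n$ every term equals $1$, which gives $\lambda_n=K$. This step is routine.

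\textbf{Decomposition.} The ratio form gives $|\lambda_j|\le 1+1/|\sin(\pi j/n)|$. Measure the distance of $j$ to $\{0,n\}$ by $d(j)=\min(j,n-j)$; on that range $\sin(\pi j/n)\ge 2d(j)/n$, so $|\lambda_j|\le 1+n/(2d(j))$. Fix a threshold $\varepsilon_n\to0$, to be chosen below. Let $D^{(1)}$ be the spectral projection of $D_{n,K}$ onto the eigenvalues with $|\lambda_j|>\varepsilon_n\sqrt{p_nK_n}$, and set $D^{(2)}=D_{n,K}-D^{(1)}$. By construction $\|D^{(2)}\|/\sqrt{p_nK_n}\le\varepsilon_n\to0$, which is \eqref{eq:dndecomposition3}. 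For the rank, $|\lambda_j|>\varepsilon_n\sqrt{p_nK_n}$ forces $n/(2d(j))\ge\varepsilon_n\sqrt{p_nK_n}-1$. Since $p_nK_n\to\infty$ by \eqref{eq:assumption1}, we may take $\varepsilon_n=(p_nK_n)^{-1/4}$, so that $\varepsilon_n\sqrt{p_nK_n}=(p_nK_n)^{1/4}\to\infty$. Eventually this gives $d(j)\le n/(p_nK_n)^{1/4}$, and hence
\[
\rank D^{(1)}\le 2n(p_nK_n)^{-1/4}+1=o(n),
\]
which is \eqref{eq:dndecomposition2}.

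\textbf{Conclusion.} The rank inequality (Bai--Silverstein, Theorem A.43) gives
\[
\|F^{D/\sqrt{pK}}-F^{D^{(2)}/\sqrt{pK}}\|_\infty\le \rank(D^{(1)})/n\to0 .
\]
The eigenvalues of $D^{(2)}/\sqrt{pK}$ lie in $[-\varepsilon_n,\varepsilon_n]$, so $\mu_{D^{(2)}/\sqrt{pK}}\to\delta_0$. Combining the two statements gives $\mu_{D_{n,K_n}/\sqrt{p_nK_n}}\to\delta_0$.

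The only real obstacle is getting the eigenvalue bound to be uniform in $K$. The bound $|\lambda_j|\le 1+1/\sin(\pi j/n)$ is useful precisely because it does not depend on $K$, so it should be used rather than the trivial bound $|\lambda_j|\le K$. Any growth rate of $p_nK_n$, however slow, then suffices.
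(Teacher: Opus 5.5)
Your proposal is correct and follows essentially the paper's proof. Both use the Dirichlet-kernel computation of the circulant spectrum, the $K$-independent bound $|\lambda_j|\le 1+1/|\sin(\pi j/n)|$, and a split of the spectral decomposition into $o(n)$ eigenvalues with $j$ near $0$ or $n$ plus a small-norm remainder, concluding with the rank inequality. The only cosmetic difference is that you threshold on eigenvalue size $(p_nK_n)^{1/4}$ and count indices, whereas the paper cuts at index $n/(p_nK_n)^{1/3}$, bounds the eigenvalues in the middle range, and then invokes the norm-perturbation result where you simply note that the spectrum of $D^{(2)}/\sqrt{p_nK_n}$ lies in $[-\varepsilon_n,\varepsilon_n]$.
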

\begin{proof}
    Let $n\ge1$ and $K<n$ denote an even integer. Notice that $D_{n,K}$ is the circulant matrix
    \[
      D_{n,K}=\begin{pmatrix}
          c_0 & c_{n-1} & \dots & c_2 & c_1\\
          c_1 & c_0 & c_{n-1} & &  c_2\\
          \vdots & c_1 & c_0 & \ddots & \vdots\\
          c_{n-2} &  & \ddots & \ddots & c_{n-1}\\
          c_{n-1} & c_{n-2} & \dots & c_1 & c_0
      \end{pmatrix}  
    \]with coefficients $c_1=\dots=c_{K/2}=c_{n-K/2}=\dots=c_{n-1}=1$ and $c_0=c_{K/2+1}=\dots=c_{n-K/2-1}=0$. It is well known (see for instance \cite{Gray2006Toeplitz}, or \cite{Davis1979} for a comprehensive study of circulant matrices) that its eigenvalues are, for all $j\in\{1,\dots,n\}$,
    \[
        \lambda_j = \sum_{k=0}^{n-1}c_ke^{2i\pi kj/n}.
    \]In our setting, we obtain
    \[
        \lambda_n = K,
    \]and, for $j\in\{1,\dots,n-1\}$,
    \begin{align*}
        \lambda_j &= \sum_{k=-K/2}^{K/2}e^{2i\pi kj/n}-1 \\
        &=e^{-2i\pi\frac{Kj}{2n}}\frac{1-e^{2i\pi(K+1)j/n}}{1-e^{2i\pi j/n}}-1\\
        &=\frac{\sin((K+1)\pi \frac jn)}{\sin(\pi \frac jn)}-1.
    \end{align*}
    This proves the first part of the lemma.

    Let $(K_n)_{n\ge1}$ and $(p_n)_{n\ge1}$ be any two sequences that satisfy the assumptions of the lemma. We will show that there are $n-o(n)$ eigenvalues that are uniformly small compared to $\sqrt{p_nK_n}$; these will be used to form the $D_{n, K_n}^{(2)}$ matrix. The remaining $o(n)$ eigenvalues will be used to form the $D_{n, K_n}^{(1)}$ matrix.  

    For ease of notation, we write $D_n$ for $D_{n,K_n}$. We introduce the orthogonal matrix $P_n$ that diagonalizes $D_n$:
    \[
        D_n = P_n\diag(\lambda_1,\dots,\lambda_n)P_n^{-1}.
    \]We set
    \[
        D^{(1)}_n = P_n\diag(\lambda_1,\dots,\lambda_{k_1},0,\dots,0,\lambda_{k_2},\dots,\lambda_n)P_n^{-1},
    \]where
    \[
        k_1 = \left\lfloor\frac{n}{\sqrt[3]{p_nK_n}}\right\rfloor\quad\mbox{ and }\quad k_2=\left\lceil n-\frac{n}{\sqrt[3]{p_nK_n}}\right\rceil.
    \]We also set
    \[
        D^{(2)}_n = D_n-D^{(1)}_n.
    \]On the one hand,
    \[
        \operatorname{rank}\left(\frac{D^{(1)}_n}{\sqrt{p_nK_n}}\right)\le\left\lfloor\frac{n}{\sqrt[3]{p_nK_n}}\right\rfloor+n-\left\lceil n-\frac{n}{\sqrt[3]{p_nK_n}}\right\rceil+1\underset{n\to\infty}{=}o(n).
    \]On the other hand, for all $j\in\left\{\left\lfloor\frac{n}{\sqrt[3]{p_nK_n}}\right\rfloor+1,\dots,\left\lceil n-\frac{n}{\sqrt[3]{p_nK_n}}\right\rceil-1\right\}$ and for $p_nK_n$ sufficiently large,
    \[
        |\lambda_j|\le\left|\frac{\sin((K_n+1)\pi \frac jn)}{\sin(\pi \frac jn)}\right|+1\le\frac{1}{\sin\left(\frac{\pi}{\sqrt[3]{p_nK_n}}\right)}+1\le C\sqrt[3]{p_nK_n}+1,
    \]where $C=\sup_{x\ge2}\frac{1}{\sqrt[3]{x}\sin\left(\frac{\pi}{\sqrt[3]{x}}\right)}=\frac{1}{\sqrt[3]{2}\sin\left(\frac{\pi}{\sqrt[3]{2}}\right)}$, and therefore
    \begin{align*}
        \left\|\frac{D_n^{(2)}}{\sqrt{p_nK_n}}\right\|&=\frac{1}{\sqrt{p_nK_n}}\sup\left\{|\lambda_j|:j\in\left\{\left\lfloor\frac{n}{\sqrt[3]{p_nK_n}}\right\rfloor+1,\dots,\left\lceil n-\frac{n}{\sqrt[3]{p_nK_n}}\right\rceil-1\right\}\right\}\\
        &\le C(p_nK_n)^{-\frac16}+(p_nK_n)^{-\frac12}\toninfty0.
    \end{align*}This shows claims \eqref{eq:dndecomposition1}-\eqref{eq:dndecomposition3} of the lemma.

    Now, by Theorems A.43 and A.45, along with Remark A.40, from \cite{bai_silverstein_2010} (recalled as Theorem~\ref{thm:thm43} and Theorem~\ref{thm:thm45} in Appendix~\ref{sec:tools}), we have
    \[
        \mu_{\frac{D_n}{\sqrt{p_nK_n}}}\toninfty\delta_0\qquad\mbox{ weakly}.
    \]
\end{proof}

\subsection{Core of the proof} Let $(p_n)_{n\ge1}$ and $(K_n)_{n\ge1}$ be two sequences as in Theorem~\ref{thm:mainresult}, satisfying in particular \eqref{eq:assumption1} and \eqref{eq:assumption2}. We also introduce the two matrices $Z_n$ and $U_n$ defined in Theorem~\ref{thm:coupling} which satisfy \eqref{eq:diffconv}. The following lemmas progressively replace $Z_n$ and $U_n$ while preserving the asymptotic behavior of the empirical eigenvalue distribution; their proofs are based on Theorem \ref{thm:replacement} from Appendix \ref{sec:replacement}. This reduces the problem to classes of matrices that have been extensively studied in the literature, allowing us to derive the limiting spectral distribution. The proof is postponed until the end of this subsection, after the statements of the lemmas.

For simplicity, we introduce the following notation.  For a random matrix $M$, we let $M^\circ = M - \mathbb{E}[M]$, where the expectation acts entrywise on the matrix elements.

We emphasize that, throughout the following lemmas, the sequences $(p_n)_{n\ge1}$ and $(K_n)_{n\ge1}$, satisfy \eqref{eq:assumption1} and \eqref{eq:assumption2}.

\begin{lemma}\label{lm:centerUZ}
We have
    \[
        \mu_{\frac{{Z}_n+{U}_n}{\sqrt{p_nK_n}}}-\mu_{\frac{{Z}_n^\circ+{U}_n^\circ}{\sqrt{p_nK_n}}}\toninfty0\qquad\mbox{ weakly in probability.}
    \]
\end{lemma}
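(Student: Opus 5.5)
We need to show that subtracting the mean matrices does not change the limiting spectral distribution. The difference between the two matrices is
\[
\frac{\mathbb{E}[Z_n]+\mathbb{E}[U_n]}{\sqrt{p_nK_n}} = \frac{(1-p_n)D_{n,K_n}}{\sqrt{p_nK_n}} + \frac{p_nK_n}{n\sqrt{p_nK_n}}\left(J_n - I_n\right),
\]
where $D_{n,K_n}$ is the ring-lattice adjacency matrix and $J_n$ is the all-ones matrix. Indeed, $\mathbb{E}[Z_n]=(1-p_n)D_{n,K_n}$ entrywise. Also $\mathbb{E}[U_n]=\frac{p_nK_n}{n}(J_n-I_n)$, since $U_n$ has zero diagonal.

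The plan is to split this deterministic perturbation into a low-rank part and a part of small operator norm. Then the rank inequality (Theorem A.43) and the norm perturbation bound (Theorem A.45 with Remark A.40) give that the Lévy distance between the two empirical spectral distributions goes to $0$ almost surely. Weak convergence in probability then follows. Equivalently, one could invoke the replacement principle, Theorem~\ref{thm:replacement}, if its hypotheses are of this type.

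\textbf{Step 1: the Erd\H{o}s--R\'enyi mean.} The term $\frac{\sqrt{p_nK_n}}{n}J_n$ has rank one, so it contributes at most $1/n$ to the Kolmogorov distance by Theorem A.43. The term $\frac{\sqrt{p_nK_n}}{n}I_n$ has operator norm $\sqrt{p_nK_n}/n \le \sqrt{K_n}/n \to 0$, so it is handled by Theorem A.45.

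\textbf{Step 2: the band mean.} Apply Lemma~\ref{lm:circulantmatrix}, which uses only assumption~\eqref{eq:assumption1}. It gives $D_{n,K_n}=D^{(1)}_{n,K_n}+D^{(2)}_{n,K_n}$ with $\operatorname{rank}(D^{(1)}_{n,K_n})=o(n)$ and $\|D^{(2)}_{n,K_n}\|/\sqrt{p_nK_n}\to 0$. Multiplying by $(1-p_n)\in[0,1]$ preserves both properties. The rank part shifts the empirical distribution function by at most $o(n)/n\to0$ in Kolmogorov distance. The small-norm part moves each eigenvalue by $o(1)$, which controls the Lévy distance.

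\textbf{Step 3: combine.} Write
\[
\frac{Z_n+U_n}{\sqrt{p_nK_n}} = \frac{Z_n^\circ+U_n^\circ}{\sqrt{p_nK_n}} + R_n + S_n,
\]
with $\operatorname{rank}(R_n)=o(n)$ and $\|S_n\|\to0$ deterministically. By the triangle inequality for the Lévy distance, together with Theorems A.43 and A.45, we get $L\bigl(\mu_{\cdot},\mu_{\cdot}\bigr)\le \operatorname{rank}(R_n)/n+\|S_n\|\to0$ surely. This gives the claimed convergence, in fact almost surely and not only in probability.

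The main obstacle is the band part $(1-p_n)D_{n,K_n}/\sqrt{p_nK_n}$. It is not small in norm, since its largest eigenvalue is of order $K_n/\sqrt{p_nK_n}$, which may be huge, and it is not of rank $o(n)$. The eigenvalue split in Lemma~\ref{lm:circulantmatrix} resolves this. After that, the argument is routine bookkeeping with the perturbation inequalities.
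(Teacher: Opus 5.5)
Your proposal is correct and is essentially the paper's proof. The paper also writes the mean $\mathbb{E}[(Z_n+U_n)/\sqrt{p_nK_n}]$ as a rank-$o(n)$ piece $\frac{\sqrt{p_nK_n}}{n}J_n+(1-p_n)D^{(1)}_n/\sqrt{p_nK_n}$ plus a vanishing-norm piece $-\frac{\sqrt{p_nK_n}}{n}I_n+(1-p_n)D^{(2)}_n/\sqrt{p_nK_n}$ via Lemma~\ref{lm:circulantmatrix}, and concludes with Theorems~\ref{thm:thm43} and~\ref{thm:thm45}; your quantitative L\'evy-distance bookkeeping is a harmless sharpening, and the aside about the replacement principle is unnecessary, since that result swaps random entries rather than removing a deterministic mean.
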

\begin{proof}
    First, $\mathbb{E}\left[\frac{U_n}{\sqrt{p_nK_n}}\right] = \frac{\sqrt{p_nK_n}}{n}(J_n-I_n)$ where $J_n$ is the matrix with all entries equal to $1$ and $I_n$ is the identity matrix.  Notice that
    \[
        \operatorname{rank}\frac{\sqrt{p_nK_n}}{n}J_n = 1 \underset{n\to\infty}{=}o(n)
    \]and
    \[
        \left\|\frac{\sqrt{p_nK_n}}{n}I_n\right\|=\frac{\sqrt{p_nK_n}}{n}\toninfty0.
    \]Now, by the definition of $Z_n$, $\mathbb{E}[Z_n] = (1-p_n)D_n$, where $D_n$ is the adjacency matrix of a regular ring lattice of degree $K$ on $n$ vertices. Therefore, using the decomposition $D_n=D^{(1)}_n+D^{(2)}_n$ from Lemma~\ref{lm:circulantmatrix},
    \[
        \operatorname{rank}\left(\frac{\sqrt{p_nK_n}}{n}J_n+(1-p_n)\frac{D^{(1)}_n}{\sqrt{p_nK_n}}\right)\underset{n\to\infty}{=}o(n)
    \]and
    \[
        \left\|-\frac{\sqrt{p_nK_n}}{n}I_n+(1-p_n)\frac{D^{(2)}_n}{\sqrt{p_nK_n}}\right\|\toninfty0.
    \]Finally, since
    \[
        \mathbb{E}\left[\frac{Z_n+U_n}{\sqrt{p_nK_n}}\right] =\left(\frac{\sqrt{p_nK_n}}{n}J_n+(1-p_n)\frac{D^{(1)}_n}{\sqrt{p_nK_n}}\right)+\left(-\frac{\sqrt{p_nK_n}}{n}I_n+(1-p_n)\frac{D^{(2)}_n}{\sqrt{p_nK_n}}\right)
    \]then, by Theorem~\ref{thm:thm43} and Theorem~\ref{thm:thm45}, we obtain the desired conclusion.  
\end{proof}

The Gaussian Orthogonal Ensemble (GOE) of size $n$ is the distribution on real symmetric $n \times n$ matrices of the form $G_n = \frac{1}{\sqrt{n}}X_n$, where $X_n$ is an $n\times n$ symmetric random matrix whose diagonal and upper-triangular entries are independent and, for $i \leq j$, the $(i,j)$-entry has normal distribution $\mathcal{N}(0,1+\delta_{ij})$.
Here $\delta_{ij}$ is the Kronecker delta.  
For the remainder of the proof, we let $G_n$ be an $n \times n$ matrix drawn from the GOE, independent of all other sources of randomness.   
\begin{lemma}\label{lm:replaceU}
    We have
    \[
        \mu_{\frac{{Z}_n^\circ+{U}_n^\circ}{\sqrt{p_nK_n}}}-\mu_{\frac{{Z}_n^\circ}{\sqrt{p_nK_n}} + G_n} \toninfty 0\qquad\mbox{ weakly in probability}. 
    \]
\end{lemma}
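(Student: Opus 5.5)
We compare $\frac{U_n^\circ}{\sqrt{p_nK_n}}$ with the GOE matrix $G_n$ by a Lindeberg-type replacement, keeping $\frac{Z_n^\circ}{\sqrt{p_nK_n}}$ as a common independent summand. The tool is the replacement principle, Theorem~\ref{thm:replacement} of Appendix~\ref{sec:replacement}. We assume it reduces weak convergence in probability of the difference of ESDs to showing that, for every fixed $z$ with $\operatorname{Im} z>0$, the normalized traces of the resolvents of $\frac{Z_n^\circ+U_n^\circ}{\sqrt{p_nK_n}}$ and $\frac{Z_n^\circ}{\sqrt{p_nK_n}}+G_n$ differ by $o(1)$ in expectation and concentrate, together with tightness. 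Tightness follows from the second moments: $\mathbb{E}\frac1n\tr\big(\frac{Z_n^\circ}{\sqrt{p_nK_n}}\big)^2\le \frac{Kp(1-p)}{pK}\le 1$, $\mathbb{E}\frac1n\tr\big(\frac{U_n^\circ}{\sqrt{p_nK_n}}\big)^2\le 1$, and $\mathbb{E}\frac1n\tr G_n^2\le 2$.

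Concentration of $s(z)=\frac1n\tr(M-z)^{-1}$ around its mean is standard. Changing one row/column pair of a matrix changes $\tr(M-z)^{-1}$ by at most $C/\operatorname{Im}z$, a rank-two perturbation bound. McDiarmid's inequality, applied to the row-by-row martingale with independent entries, then gives variance $O(1/n)$ for each model.

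For the expectations, we condition on $Z_n$ and note that $W_n:=\frac{U_n^\circ}{\sqrt{p_nK_n}}$ has independent centered upper-triangular entries. The off-diagonal entries have variance $\frac{q(1-q)n}{pK}\cdot\frac1n=\frac{1-q}{n}$, where $q=\frac{p_nK_n}{n}$. The diagonal entries vanish, and their contribution is negligible. The entries of $G_n$ have variance $\frac1n$. We swap entries one pair $(i,j)$ at a time and Taylor expand the resolvent to third order. The first- and second-order terms cancel up to the variance mismatch $\frac{q}{n}$, which summed over $n^2/2$ entries contributes $O(q)\to0$ after the $\frac1n$ normalization, using $K_n/n\to0$. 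The third-order remainder is bounded by $C(\operatorname{Im}z)^{-4}\frac1n\sum_{i<j}\big(\mathbb{E}|W_{ij}|^3+\mathbb{E}|g_{ij}|^3\big)$.

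The \emph{main obstacle} is that $W_n$ is sparse, so the third moments are not of Wigner size. We have $\mathbb{E}|W_{ij}|^3\approx q\,(pK)^{-3/2}=\frac{1}{n\sqrt{pK}}$. Summing over $n^2$ pairs and dividing by $n$ gives $O((p_nK_n)^{-1/2})\to0$ by \eqref{eq:assumption1}, while the Gaussian part gives $O(n^{-1/2})$. So the Lindeberg condition holds precisely because $p_nK_n\to\infty$, and this is the step where that hypothesis is used. One must check that the resolvent derivative bounds are uniform in the fixed matrix $\frac{Z_n^\circ}{\sqrt{p_nK_n}}$. This holds because all the bounds are deterministic in terms of $\operatorname{Im}z$ and the entries involved. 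The comparison therefore holds conditionally on $Z_n$, and hence unconditionally.

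If Theorem~\ref{thm:replacement} is phrased directly as such an entrywise replacement statement, with matched first two moments and a Lindeberg condition, we will simply verify those hypotheses. The computations are the ones above: matching means, variances agreeing up to a factor $1-q\to1$, and the Lindeberg sum $\frac{1}{n}\sum_{i<j}\mathbb{E}\big[|W_{ij}|^2\mathbf 1_{|W_{ij}|>\eps}\big]$. That sum is zero for large $n$, since $|W_{ij}|\le (pK)^{-1/2}<\eps$.
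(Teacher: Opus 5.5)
Your proposal is correct and is essentially the paper's argument: conditionally on $Z_n$, the matrix $Z_n^\circ/\sqrt{p_nK_n}$ serves as the deterministic shift in Theorem~\ref{thm:replacement} (all-ones profile, normalization $n$), and the Lindeberg condition holds trivially for large $n$ because $|(U_n^\circ)_{ij}|/\sqrt{p_nK_n}\le (p_nK_n)^{-1/2}\to 0$. The one difference is the variance mismatch, $(1-p_nK_n/n)/n$ versus $1/n$: you absorb it into the second-order Taylor term, with total error $O(p_nK_n/n)$, whereas the paper, since Theorem~\ref{thm:replacement} requires exactly equal variances, compares with a GOE matrix rescaled by $\sqrt{1-p_nK_n/n}$ so the variances match exactly, and then removes that factor with Theorem~\ref{thm:cor41}. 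Your final paragraph would need this same step, because variances that agree only up to the factor $1-p_nK_n/n$ do not meet that hypothesis as stated.
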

\begin{proof}
    We will show 
    \begin{equation} \label{eq:showUnconv}
        \mu_{\frac{{Z}_n^\circ+{U}_n^\circ}{\sqrt{p_nK_n}}}-\mu_{\frac{{Z}_n^\circ}{\sqrt{p_nK_n}} + \sqrt{n-{p_nK_n}}G_n} \toninfty 0\qquad\mbox{ weakly in probability}. 
    \end{equation}
    Theorem \ref{thm:cor41} and the law of large numbers (along with the assumption that $\frac{p_nK_n}{n} \to 0$) imply
    \[ 
        \mu_{\frac{{Z}_n^\circ}{\sqrt{p_nK_n}} + \sqrt{n-{p_nK_n}}G_n} - \mu_{\frac{{Z}_n^\circ}{\sqrt{p_nK_n}} + G_n} \toninfty 0\qquad\mbox{ weakly in probability},
    \]
    which would then complete the proof.

    To establish \eqref{eq:showUnconv}, we will apply Theorem \ref{thm:replacement} in Appendix \ref{sec:replacement}.
    We will condition on the matrix $Z_n$ and treat $Z_n^\circ$ as deterministic for the remainder of the proof.
    The deterministic matrix $B_n$ from Theorem \ref{thm:replacement} will be taken to be $\frac{Z_n^\circ}{\sqrt{p_nK_n}}$. 
    The sparsity profile $A_n$ will be the all-ones matrix. 
    The value $K_n$ in Theorem \ref{thm:replacement} will be taken to be $n$. 
    Finally, the matrices $X_n$ and $Y_n$ in the statement of Theorem \ref{thm:replacement} will be played by $\sqrt{n}\sqrt{n - p_nK_n}G_n$ and $\frac{U_n^\circ}{\sqrt{p_nK_n/n}}$.
    The convergence in \eqref{eq:showUnconv} is precisely the conclusion of Theorem \ref{thm:replacement}, and so it only remains to check that the three main assumptions of Theorem \ref{thm:replacement} are satisfied. 

    The moment matching assumption is trivial and follows from the scaling (it is the reason we scale $G_n$ by $\sqrt{n- {p_nK_n}}$). 
    The variance of the off-diagonal entries for both matrices is $1 - \frac{p_nK_n}{n} \leq 1$, and hence assumption \eqref{assump:sparsity} is satisfied. 
    Finally, since $K_n p_n \to \infty$, it follows that, for any $\eps > 0$, $\frac{|(U^\circ_n)_{ij}|}{\sqrt{p_nK_n/n}} \leq \eps \sqrt{n}$ for all $n$ sufficiently large. 
    This verifies assumption \eqref{eq:lind} (the GOE matrix $\sqrt{n - p_nK_n}G_n$ trivially satisfies this assumption since the third absolute moment is bounded by a constant).  
    The assumptions of Theorem \ref{thm:replacement} are satisfied, and \eqref{eq:showUnconv} follows. 
\end{proof}

\begin{lemma}\label{lm:pequal1}
    Under the additional assumption that $p_n\toninfty1$, we have
    \[
        \mu_{G_n+\frac{{Z}_n^\circ}{\sqrt{p_nK_n}}}\toninfty \mu_{\mathrm{SC}}\qquad\mbox{ weakly in probability}.
    \]
\end{lemma}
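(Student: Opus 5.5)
When $p_n \to 1$ the band part should disappear. The plan is to show that $\frac{Z_n^\circ}{\sqrt{p_nK_n}}$ has a vanishing normalized second moment, and then invoke Theorem~\ref{thm:cor41} to compare $\mu_{G_n+Z_n^\circ/\sqrt{p_nK_n}}$ with $\mu_{G_n}$. The latter converges to $\mu_{\mathrm{SC}}$ almost surely by Wigner's semicircle law for the GOE, since $G_n$ has the standard normalization.

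\textbf{Second-moment computation.} The entries of $Z_n^\circ$ vanish outside the extended $K_n$-band. Inside it, they are centered $\bernoulli(1-p_n)$ variables with variance $p_n(1-p_n)$. The band contains exactly $nK_n$ entries (counting both triangles), so
\[
  \mathbb{E}\left[\frac1n\tr\left(\frac{Z_n^\circ}{\sqrt{p_nK_n}}\right)^2\right]
  = \frac{1}{np_nK_n}\, nK_n\, p_n(1-p_n) = 1-p_n \toninfty 0 .
\]

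\textbf{Comparison.} By Theorem~\ref{thm:cor41}, the cube of the Lévy distance satisfies
\[
  L^3\!\left(\mu_{G_n+Z_n^\circ/\sqrt{p_nK_n}},\,\mu_{G_n}\right) \le \frac1n\tr\left(\frac{Z_n^\circ}{\sqrt{p_nK_n}}\right)^2 .
\]
The right-hand side is nonnegative with expectation $1-p_n\to0$, so by Markov's inequality the Lévy distance tends to $0$ in probability. Combined with the almost sure convergence $\mu_{G_n}\to\mu_{\mathrm{SC}}$, this gives the claimed convergence weakly in probability. The independence of $G_n$ and $Z_n$ is not even needed here, because the bound is deterministic.

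\textbf{Main point.} There is no real obstacle. The only step needing care is counting the band entries, which gives exactly $K_n$ per row since $K_n<n$. Note that Lemma~\ref{lm:pequal1} does not use \eqref{eq:assumption2} beyond what is already built into the definitions.
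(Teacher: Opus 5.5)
Your proposal is correct and follows the paper's proof essentially verbatim. Like the paper, you compute $\frac1n\mathbb{E}\tr\bigl(Z_n^\circ/\sqrt{p_nK_n}\bigr)^2 = 1-p_n\to0$, combine Markov's inequality with Theorem~\ref{thm:cor41} to compare with $\mu_{G_n}$, and conclude from the semicircle law for the GOE; your explicit Lévy-distance bound is just the quantitative form of Bai--Silverstein's Corollary A.41 rather than the qualitative restatement in Appendix~\ref{sec:tools}.
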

\begin{proof}
    If $p_n\toninfty1$, then
    \[
        \frac{1}{n}\mathbb{E}\left[\tr\left(\frac{Z_n^\circ}{\sqrt{p_nK_n}}\right)^2\right] = (1-p_n)\toninfty0.
    \]Therefore, the second spectral moment of the difference between $G_n+\frac{{Z}_n^\circ}{\sqrt{p_nK_n}}$ and $G_n$ goes to $0$. Hence by Theorem~\ref{thm:cor41} and Markov's inequality, we have:
    \[
        \mu_{G_n+\frac{{Z}_n^\circ}{\sqrt{p_nK_n}}}-\mu_{G_n}\toninfty0\qquad\mbox{ weakly in probability}.
    \]Since $\mu_{G_n}$ converges weakly to the semicircle law (see, for example, \cite{anderson2010introduction}), we have the desired result.
\end{proof}

\begin{lemma}\label{lm:gaussianZ}
    Under the additional assumption that $p_n\toninfty p\in[0,1)$, we have
    \[
        \mu_{G_n+\frac{{Z}_n^\circ}{\sqrt{p_nK_n}}}-\mu_{G_n+\sqrt{1-p}B_n}\toninfty0\qquad\mbox{ weakly in probability},
    \]where $B_n$ is an $n\times n$ symmetric random matrix independent of $G_n$ and whose upper-triangular entries are independent and verify, for all $i\le j$,
    \[
        (B_n)_{ij}=\begin{cases}
            \mathcal{N}\left(0,\frac{1}{{K_n}}\right) & \mbox{if }\quad 0 < \min\big(|i-j|,\, n - |i-j|\big) \le \frac{K_n}{2}, \\
            0 & \mbox{otherwise,}
        \end{cases}
    \]
    with $\mathcal{N}\left(0,1\right)$ the standard normal distribution. 
\end{lemma}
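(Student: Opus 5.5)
We will apply Theorem~\ref{thm:replacement} from Appendix~\ref{sec:replacement} with $G_n$ as the deterministic perturbation, in the same way as in the proof of Lemma~\ref{lm:replaceU}. Since $G_n$ is independent of $Z_n$ and $B_n$, we condition on $G_n$ and treat it as deterministic. The two band matrices are then compared entrywise at the scale prescribed by that theorem. In its notation:
\begin{itemize}
\item the deterministic matrix is $G_n$;
\item the sparsity profile is the $0/1$ indicator of the extended $K_n$-band;
\item the parameter $K_n$ is the band width $K_n$;
\item the two random matrices are $X_n=\sqrt{K_n}\sqrt{1-p}\,\sqrt{K_n}B_n\cdot\frac{1}{\sqrt{K_n}}$, i.e.\ $\sqrt{1-p}\,\sqrt{K_n}B_n$ with off-diagonal band entries $\mathcal{N}(0,1-p)$, and $Y_n=\sqrt{K_n}\,\frac{Z_n^\circ}{\sqrt{p_nK_n}}=\frac{Z_n^\circ}{\sqrt{p_n}}$.
\end{itemize}
Both $X_n/\sqrt{K_n}$ and $Y_n/\sqrt{K_n}$ then have the normalization of Theorem~\ref{thm:replacement}. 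The conclusion is $\mu_{G_n+Z_n^\circ/\sqrt{p_nK_n}}-\mu_{G_n+\sqrt{1-p}B_n}\to0$ weakly in probability, first conditionally on $G_n$ and then unconditionally by dominated convergence.

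Next we check the hypotheses.
\begin{itemize}
\item \emph{Means.} Both matrices are centered.
\item \emph{Variances.} Inside the band, $Y_n$ has entry variance $p_n(1-p_n)/p_n=1-p_n$, while $X_n$ has variance $1-p$; outside the band both vanish. The variances therefore do not match exactly, only asymptotically.
\item \emph{Variance bound.} Both variances are at most $1$, so the sparsity/variance assumption \eqref{assump:sparsity} holds.
\item \emph{Lindeberg condition \eqref{eq:lind}.} The entries of $Y_n$ are bounded by $1/\sqrt{p_n}$. This is $o(\sqrt{K_n})$ because $p_nK_n\to\infty$, so for every $\eps>0$ the truncation event is empty for large $n$. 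The Gaussian entries have bounded third absolute moment.
\end{itemize}

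The expected obstacle is the variance mismatch $1-p_n$ versus $1-p$, since the moment matching in Theorem~\ref{thm:replacement} presumably requires equal variances. We fix this by introducing the intermediate matrix $\sqrt{1-p_n}\,B_n$ and applying Theorem~\ref{thm:replacement} between $Z_n^\circ/\sqrt{p_nK_n}$ and $\sqrt{1-p_n}B_n$, where the variances now match exactly.

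It remains to compare $G_n+\sqrt{1-p_n}B_n$ with $G_n+\sqrt{1-p}B_n$. Their difference is $(\sqrt{1-p_n}-\sqrt{1-p})B_n$, and
\[
\frac1n\mathbb{E}\tr\big((\sqrt{1-p_n}-\sqrt{1-p})B_n\big)^2\le(\sqrt{1-p_n}-\sqrt{1-p})^2\to0,
\]
since $\frac1n\mathbb{E}\tr B_n^2= 1$ (each row has $K_n$ band entries of variance $1/K_n$). Theorem~\ref{thm:cor41} together with Markov's inequality then shows that these two empirical measures merge in probability. Combining this with the previous step by the triangle inequality for weak convergence in probability proves the lemma.

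A minor point concerns the diagonal. $Z_n^\circ$ and $B_n$ both have zero diagonal, so the profile excludes it, and this is consistent with how the profile was used in Lemma~\ref{lm:replaceU}.
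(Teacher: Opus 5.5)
Your proposal is correct and follows the paper's proof. The paper also conditions on $G_n$ and applies Theorem~\ref{thm:replacement} with the band profile, comparing $Z_n^\circ/\sqrt{p_n}$ with $\sqrt{K_n(1-p_n)}B_n$ so that the variances match exactly, and then passes from $\sqrt{1-p_n}B_n$ to $\sqrt{1-p}B_n$ via Theorem~\ref{thm:cor41}. For that last step, your explicit computation $\frac1n\mathbb{E}\tr B_n^2=1$ combined with Markov's inequality is a cleaner justification than the paper's appeal to ``the law of large numbers.''
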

\begin{proof}
    By conditioning on $G_n$, we can treat it as deterministic.  
    In this case, the result then follows by applying Theorem \ref{thm:replacement} in Appendix \ref{sec:replacement}.
    Indeed, $G_n$ will become the mean term (which is called $B_n$ in Theorem \ref{thm:replacement}). 
    The sparsity profile $A_n$ will follow the band structure of $Z_n$ and $B_n$:
    \[
        (A_n)_{ij}=\begin{cases}
            1 & \mbox{if }\quad 0 < \min\big(|i-j|,\, n - |i-j|\big) \le \frac{K_n}{2}, \\
            0 & \mbox{otherwise.}
        \end{cases}
    \]
    The matrices $X_n$ and $Y_n$ from Theorem \ref{thm:replacement} will be played by $Z_n^\circ/\sqrt{p_n}$ and $\sqrt{K_n(1-p_n)}B_n$, and $K_n$ in Theorem \ref{thm:replacement} is the same as it is here.
    Assumption \eqref{assump:sparsity} can be checked trivially, and assumption \eqref{eq:lind} follows since $p_n K_n \to \infty$. 
    Theorem \ref{thm:replacement} then gives that
    \[
        \mu_{G_n+\frac{{Z}_n^\circ}{\sqrt{p_nK_n}}}-\mu_{G_n+\sqrt{1-p_n}B_n}\toninfty0\qquad\mbox{ weakly in probability}.
    \]
    Finally, Theorem \ref{thm:cor41} and the law of large numbers imply
    \[
        \mu_{G_n+\sqrt{1-p}B_n} - \mu_{G_n+\sqrt{1-p_n}B_n}\toninfty0\qquad\mbox{ weakly in probability},
    \]
    which completes the proof.
\end{proof}

\begin{lemma} \label{lem:convBn}
    Recalling the definition of $B_n$ introduced in Lemma~\ref{lm:gaussianZ}, we have
    \[
        \mu_{B_n}\toninfty\mu_{\mathrm{SC}}\qquad\mbox{ weakly in probability}.
    \]
\end{lemma}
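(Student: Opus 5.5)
We will prove Lemma~\ref{lem:convBn} with the moment method for periodic Gaussian band matrices. The ingredients are: convergence of the expected moments $\frac1n\mathbb{E}\tr B_n^k$ to the Catalan numbers, a variance bound that upgrades this to convergence in probability, and the fact that the semicircle law is determined by its moments. Alternatively, one could cite the band-matrix universality results of \cite{MolchanovPasturKhorunzhii1992, BMP1991} directly. Their hypotheses are a band width tending to infinity, which holds since $K_n\ge p_nK_n\to\infty$, and entries of variance (bandwidth)$^{-1}$ with all moments controlled, which holds here. We still sketch the self-contained argument, since the periodic structure (the extended $K$-band) makes every row statistically identical.

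\textbf{Expected moments.} By rotation invariance of the periodic band, $\frac1n\mathbb{E}\tr B_n^k = \mathbb{E}(B_n^k)_{11}$. Expanding this gives a sum over closed walks $1=i_0,i_1,\dots,i_k=i_0$ with every step satisfying $0<\mathrm{dist}(i_{l},i_{l+1})\le K_n/2$ on the cycle. Each walk contributes $\mathbb{E}\prod_l (B_n)_{i_li_{l+1}}$, which vanishes unless every edge is traversed at least twice, because the entries are centered Gaussians. Group walks by the shape of the underlying graph. When $k=2m$ and the graph is a tree with $m$ edges, each traversed exactly twice, the shape contributes $K_n^{-m}$ times the number of vertex labelings. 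The $m$ new vertices are each chosen among the $K_n$ band-neighbors of an already visited vertex, and they must be distinct. That gives $K_n^m(1+O(m^2/K_n))$ labelings, so these walks contribute $C_m(1+o(1))$ in total, where $C_m$ is the $m$-th Catalan number. For any other shape, the number of distinct vertices $v$ satisfies $v-1<k/2$. Each new vertex costs at most $K_n$ choices, and Gaussian moments are bounded by constants depending only on $k$. Such shapes therefore contribute $O(K_n^{v-1-k/2})=O(K_n^{-1/2})\to0$. Odd moments vanish identically.

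\textbf{Concentration.} We bound $\var\left(\frac1n\tr B_n^k\right)$ by a sum over pairs of closed walks $(\gamma_1,\gamma_2)$ that share at least one edge; pairs sharing no edge contribute zero covariance by independence. The joint walk is connected, every edge is used at least twice, and its starting point ranges over $n$ positions. Counting as above gives at most $n\,K_n^{k-1}$ labelings for a joint graph with $k$ edges, hence at most $k+1$ vertices. After normalizing by $n^{-2}K_n^{-k}$ this yields $\var\le C_k/(nK_n)\to0$. Chebyshev's inequality then gives $\frac1n\tr B_n^k\to m_k(\mu_{\mathrm{SC}})$ in probability for each $k$. Since $\mu_{\mathrm{SC}}$ has compact support and is determined by its moments, the standard moment-method argument (see \cite{anderson2010introduction}) gives weak convergence in probability.

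\textbf{Main obstacle.} The delicate step is the combinatorial count of vertex labelings in the band. We must check that the tree contributions are asymptotically exactly $K_n^m$; this needs only $K_n\to\infty$, since collisions cost $O(1/K_n)$, and it does not need $K_n/n\to0$, because the extended band is a full circulant neighbourhood of size exactly $K_n$. The same count must also hold uniformly for the non-tree shapes. Wrap-around near the corners causes no issue, because periodicity makes all rows identical.
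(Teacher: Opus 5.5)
Your proof is correct, but it takes a different route from the paper.

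The paper does no combinatorics. It writes $B_n=B_n^{(1)}+B_n^{(2)}$, where $B_n^{(1)}$ keeps only the entries with $|i-j|\le K_n/2$ (an ordinary, non-periodic band matrix). It quotes Theorem~1 of \cite{MolchanovPasturKhorunzhii1992} for $\mu_{B_n^{(1)}}\to\mu_{\mathrm{SC}}$. It then notes that the wrap-around part $B_n^{(2)}$ sits in the two corners and has at most $K_n$ nonzero rows, hence rank at most $K_n$. Theorem~\ref{thm:thm43} together with $K_n/n\to0$ then shows the corners do not matter. That argument is three lines long, but it outsources the whole moment computation and uses $K_n/n\to0$ in an essential way.

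Your argument stays with the periodic band. There, rotation invariance gives $\frac1n\mathbb{E}\tr B_n^k=\mathbb{E}(B_n^k)_{11}$ and every vertex has exactly $K_n$ neighbours, so the Catalan count needs only $K_n\to\infty$. This makes it self-contained and slightly more general: it would survive $K_n\asymp n$, where discarding the corners is no longer legitimate.

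There is one small slip in your variance step. A connected joint graph with at most $k$ edges can a priori have $k+1$ vertices, which only yields $nK_n^{k}$ labelings and $\var=O(1/n)$. To get your claimed $nK_n^{k-1}$ and $O(1/(nK_n))$ you need one more observation. If the joint graph were a tree, each walk would cross the shared edge an even number of times, so that edge would be used at least four times. That is incompatible with $k$ edges each used exactly twice. Either bound suffices for Chebyshev, so the conclusion is unaffected.
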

\begin{proof}
    For all $n\ge1$, define $B^{(1)}_n$ as follows: for all $i,j\le n$,
    \[
        (B^{(1)}_n)_{ij}=\begin{cases}
            (B_n)_{ij} & \mbox{if }\quad|i-j|\le \frac{K_n}{2},\\
            0 &\mbox{otherwise}.
        \end{cases}
    \]
    Theorem 1 of \cite{MolchanovPasturKhorunzhii1992} claims in particular that
    \[
        \mu_{B^{(1)}_n}\toninfty\mu_{\mathrm{SC}}\qquad\mbox{ weakly in probability}.
    \]
    Now, set, for all $n\ge1$,
    \[
        B^{(2)}_n = B_n-B^{(1)}_n.
    \]
    Note that $B_n^{(2)}$ has at most $K_n$ rows that are not empty (only its upper right and lower left corners are not $0$). Therefore,
    \[
        \frac1n\operatorname{rank}(B_n-B_n^{(1)})\le\frac{K_n}{n}\toninfty0.
    \]
    By Theorem~\ref{thm:thm43}, we have
    \[
        \mu_{B_n}-\mu_{B^{(1)}_n}\toninfty0\qquad\mbox{ weakly in probability},
    \]which concludes the proof.
\end{proof}

\begin{lemma}\label{lm:GBlimit}
    Under the additional assumption $p_n\toninfty p\in[0,1)$, we obtain
    \[
        \mu_{\frac{G_n+\sqrt{1-p}B_n}{\sqrt{2-p}}}\toninfty\mu_{\mathrm{SC}}\qquad\mbox{ weakly in probability}.
    \]
\end{lemma}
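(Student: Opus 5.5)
The plan is to show that $\frac{G_n+\sqrt{1-p}B_n}{\sqrt{2-p}}$ is itself a Gaussian band-type generalized Wigner matrix with flat variance profile. Then we either invoke the semicircle law for such matrices directly or run a short moment computation. First suppose $p<1$ is fixed and both summands are independent and Gaussian. The sum $H_n:=G_n+\sqrt{1-p}B_n$ is then a symmetric Gaussian matrix with independent upper-triangular entries. Its variances are
\[
\var((H_n)_{ij})=\frac1n+\frac{1-p}{K_n}\mathbf 1_{\{0<\min(|i-j|,n-|i-j|)\le K_n/2\}}\quad (i\neq j),
\]
and the diagonal variance is $2/n$. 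The row sums of the variance profile equal $\frac{n-1}{n}+(1-p)\frac{K_n}{K_n}=2-p+O(1/n)$, uniformly in $i$, because each row of the extended band contains exactly $K_n$ off-diagonal entries. Moreover, the maximal entry variance is $O(1/K_n+1/n)\to0$ since $K_n\ge pK_n\to\infty$. After dividing by $\sqrt{2-p}$, we therefore have a generalized Wigner matrix with doubly stochastic variance profile, up to $O(1/n)$, and vanishing maximal variance.

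Next we prove convergence of expected moments by the standard moment method, carried out exactly as for band matrices in \cite{MolchanovPasturKhorunzhii1992}. In $\frac1n\mathbb E\tr(H_n/\sqrt{2-p})^{k}$ we expand over closed walks, and Wick's formula for Gaussian entries pairs the steps. Walks whose pairing is non-crossing, on trees with $k/2$ edges, contribute $\prod(\text{row sums})/(2-p)^{k/2}\to1$ per tree; this is obtained by summing the leaf vertices successively, where each summation gives a row sum $2-p+O(1/n)$. The number of such trees is the Catalan number $C_{k/2}$. Crossing pairings and walks with a repeated edge force a lost free summation. Each lost summation costs a factor $\max_{ij}\var\cdot\max_i\sum_j(\cdot)$, which is $O(1/K_n)\to0$. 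We then establish variance decay, $\var(\frac1n\tr H^k)\to0$, by the same counting, since it gains a further factor $O(1/(nK_n))$. Together these give weak convergence in probability to $\mu_{\mathrm{SC}}$, because the semicircle law is determined by its moments.

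An alternative that reuses earlier results would be as follows. Since $\sqrt{1-p}B_n$ is asymptotically semicircular by Lemma~\ref{lem:convBn}, and $G_n$ is an independent GOE (orthogonally invariant), asymptotic freeness gives the free convolution $\mu_{\mathrm{SC},1}\boxplus\mu_{\mathrm{SC},1-p}=\mu_{\mathrm{SC},2-p}$. This variant needs only a freeness citation (GOE independent of any matrix with converging spectrum, e.g. \cite{anderson2010introduction}), plus the fact that semicircles of variances $1$ and $1-p$ free-convolve to a semicircle of variance $2-p$. Rescaling by $\sqrt{2-p}$ then concludes. I would present this version, keeping the moment computation as backup. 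The main obstacle is justifying the freeness or the moment bounds with the sparse band profile rather than a flat one. In the moment approach, the delicate point is controlling the contribution of the wraparound corners and the non-tree walks uniformly in $k$, and this is where the condition $K_n\to\infty$ is essential.
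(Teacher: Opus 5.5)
Your proposal is correct. The version you say you would present is exactly the paper's proof: $G_n$ is orthogonally invariant with $\mu_{G_n}\to\mu_{\mathrm{SC}}$, $\mu_{B_n}\to\mu_{\mathrm{SC}}$ by Lemma~\ref{lem:convBn}, and semicircles of variances $1$ and $1-p$ free-convolve to one of variance $2-p$. The paper simply invokes classical results on the addition of random matrices \cite{MR1796022} for this.

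Your primary plan is a genuinely different route. You treat $H_n=G_n+\sqrt{1-p}B_n$ as a single Gaussian generalized Wigner matrix whose variance profile has row sums $2-p+O(1/n)$ and maximal entry $O(1/K_n+1/n)$. You then run the tree/Catalan moment count plus a variance bound.

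This route has three advantages:
\begin{itemize}
\item It does not depend on Lemma~\ref{lem:convBn}, hence not on the band theorem of \cite{MolchanovPasturKhorunzhii1992} and the rank surgery on the corners.
\item It needs no free-probability or orthogonal-invariance input.
\item Its cost is only that you must actually carry out (or cite) the generalized-Wigner moment estimates, whereas the paper's proof is three lines.
\end{itemize}

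The obstacle you flag is not real in your route. Because the band is periodic, every row contains exactly $K_n$ band entries. So the row sums are exactly constant, and the wraparound corners need no special care. They only matter if one quotes the non-periodic band theorem, which is why the paper needs the rank argument.

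Your approach also extends further. The same variance-profile count would apply essentially verbatim to $(Z_n^\circ+U_n^\circ)/\sqrt{p_nK_n}$ itself. Its entries are independent and bounded by $2(p_nK_n)^{-1/2}$, and its row sums are $2-p_n+o(1)$. So your approach could bypass Lemmas~\ref{lm:replaceU}--\ref{lm:GBlimit} and the subsequence argument on $p_n$ entirely.

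Minor slip: $K_n\to\infty$ follows from $p_nK_n\to\infty$, not from ``$pK_n\to\infty$''.
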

\begin{proof}
    It is well known (see, for example, \cite{anderson2010introduction}) that $\mu_{G_n}$ converges weakly to the semicircle law and $G_n$ is invariant under orthogonal conjugations.  
    By Lemma \ref{lem:convBn}, $\mu_{B_n}$ also converges to the semicircle law. 
    Thus, the conclusion follows from classical results (see, for instance, \cite{MR1796022}). 
\end{proof}

We are now ready to prove Theorem~\ref{thm:mainresult}.
\begin{proof}[Proof of Theorem~\ref{thm:mainresult}]
    Let $(p_n)_{n\ge1}$ and $(K_n)_{n\ge1}$ be two sequences as in Theorem~\ref{thm:mainresult}, satisfying in particular \eqref{eq:assumption1} and \eqref{eq:assumption2}. For $A_n$ the adjacency matrix of $WS_n(K_n,p_n)$, we introduce the two matrices $Z_n$ and $U_n$ defined in Theorem~\ref{thm:coupling}, which satisfy \eqref{eq:diffconv}.

    Assume first that $(p_n)_{n\ge1}$ admits a limit $p\in[0,1]$. We distinguish the cases $p=1$ and $p<1$.

    If $p=1$, applying Theorem~\ref{thm:coupling} followed by Lemmas~\ref{lm:centerUZ},~\ref{lm:replaceU}, and~\ref{lm:pequal1} yields
    \[
        \mu_{\frac{A_n}{\sqrt{p_nK_n}}}\toninfty\mu_{\mathrm{SC}}\qquad\mbox{ weakly in probability}.
    \]Since $p_n\toninfty1$, Theorem~\ref{thm:cor41} and the law of large numbers further imply that
    \[
        \mu_{\frac{A_n}{\sqrt{p_n(2-p_n)K_n}}}\toninfty\mu_{\mathrm{SC}}\qquad\mbox{ weakly in probability}.
    \]

    If $p<1$, applying Theorem~\ref{thm:coupling} followed by Lemmas~\ref{lm:centerUZ},~\ref{lm:replaceU},~\ref{lm:gaussianZ}, and~\ref{lm:GBlimit} yields
    \[
        \mu_{\frac{A_n}{\sqrt{p_n(2-p)K_n}}}\toninfty\mu_{\mathrm{SC}}\qquad\mbox{ weakly in probability}.
    \]
    Again, by Theorem~\ref{thm:cor41} and the law of large numbers,
    \[
        \mu_{\frac{A_n}{\sqrt{p_n(2-p_n)K_n}}}\toninfty\mu_{\mathrm{SC}}\qquad\mbox{ weakly in probability}.
    \]

    We now show that the hypothesis that $(p_n)_{n\ge1}$ admits a limit in $[0,1]$ can be removed. Suppose, for contradiction, that the conclusion fails. Then there exist a bounded continuous positive function $\phi$ on $\mathbb{R}$, $\eps>0$ and $\alpha>0$, and a subsequence $(p_{n_k})_{k\ge1}$ of $(p_n)_{n\ge1}$ such that, for every $k\ge1$,
    \[
        \mathbb{P}\left(\left|\int_{\mathbb{R}}\phi(x)\mu_{\frac{A_{n_k}}{\sqrt{p_{n_k}(2-p_{n_k})K_{n_k}}}}(dx)-\int_{\mathbb{R}}\phi(x)\mu_{\mathrm{SC}}(dx)\right|\ge\eps\right)\ge\alpha.
    \]
    Since $[0,1]$ is compact, $(p_{n_k})_{k\ge1}$ admits a convergent subsequence with limit $p\in[0,1]$. By the first part of the proof, the corresponding subsequence of empirical spectral measures converges weakly to $\mu_{\mathrm{SC}}$, which is a contradiction. This completes the proof.
\end{proof}

\section{Discussion on the limiting moments of the eigenvalue distribution of $WS_n(K,p)$}\label{sec:limitingmoments}

In this section, we propose a conjectural description of the limiting moments of the empirical eigenvalue distribution of $WS_n(K,p)$ in the regime where $K$ and $p$ are fixed and do not depend on $n$. This conjecture is motivated by a local weak convergence result established by Alimohammadi, Isik, and Saberi \cite{Alimohammadi2025LocalLimitsSmallWorld}, and is further supported by numerical simulations.

Subsection~\ref{subsec:convres} recalls the main result of Alimohammadi, Isik, and Saberi and introduces the limiting random graph denoted $F(K,p)$. Subsection~\ref{subsec:conjlm} formulates our conjecture relating limiting moments to the expected number of closed rooted walks in $F(K,p)$. We derive the first five conjectured limiting moments in Subsection~\ref{subsec:momentsformulas}. Finally, we present in Subsection~\ref{subsec:momentssimulation} numerical simulations supporting the conjecture.

\subsection{Convergence result}\label{subsec:convres} We introduce the definition of the Watts--Strogatz random graph that Alimohammadi, Isik, and Saberi use in their paper. We thereafter define the limiting structure, called the full $K$-fuzz graph. We then recall their main result, that consists in the convergence of the Watts--Strogatz random graph to the full $K$-fuzz graph in a sense made precise below.

\begin{defi}[A variant of the Watts--Strogatz random graph]
Let $n\ge1$ be an integer, $K$ a positive even integer with $K<n$, and $p \in [0,1]$. The Watts--Strogatz random graph of size $n$ with parameters $(K,p)$, denoted $\widetilde{WS}_n(K,p)$, is constructed as follows:

\begin{enumerate}
    \item \textbf{Initial lattice.}  
    Let $V_n = \{1,2,\dots,n\}$ denote the set of vertices of $WS_n(K,p)$. Arrange the vertices on a clockwise cycle. Each vertex $i \in V_n$ is connected to its $K/2$ nearest neighbors on each side, forming a regular ring lattice of degree $K$.

    \item \textbf{Rewiring step.}  
    For $i = 1,2,\dots,n$ in increasing order, and for each edge $\{i,j\}$ such that $j$ is one of the $K/2$ rightmost neighbors of $i$ (taken in increasing order), independently with probability $p$, rewire the edge $\{i,j\}$ as follows: remove the edge $\{i,j\}$ and add the edge $\{i,k\}$, where $k$ is chosen uniformly at random from $V_n$.
\end{enumerate}
\end{defi}

\begin{rk}
    This variant definition allows self-loops and multiple edges. However, since the limiting structure, defined below, does not contain either self-loops or multiple edges, it is natural to hypothesize that $WS_n(K,p)$ and $\widetilde{WS}_n(K,p)$ have the same limiting structure.
\end{rk}

We now introduce the definition of the limiting structure of $\widetilde{WS}_n(K,p)$.

\begin{defi}[$K$-fuzz structure]We need a few definitions to be able to formally introduce the full $K$-fuzz graph model, denoted $F(K,p)$. See Figure~\ref{fig:fullkfuzz} for an illustration.
    \begin{itemize}
        \item \textbf{Full $K$-path.} A full $K$-path is the infinite rooted graph with vertex set $\mathbb{Z}$ in which each vertex is adjacent to its $K/2$ nearest neighbors on either side.\\
        \item \textbf{Reduced $K$-path.} A reduced $K$-path is a rooted full $K$-path from which one of the $K/2$ edges connecting the root to its rightmost neighbors chosen uniformly at random is removed.\\
        \item \textbf{Full $K$-fuzz and reduced $K$-fuzz.} A full $K$-fuzz is obtained by starting from a full $K$-path and, for each node, independently rewiring each of its rightmost edges with probability $p$. Each rewired edge becomes a shortcut to a new full $K$-fuzz, producing an independent recursive copy. In addition, each node receives $\poisson(\frac{pK}{2})$ incoming shortcuts, each connecting to the root of a reduced $K$-fuzz. The reduced $K$-fuzz is defined analogously but starts from a reduced $K$-path.
    \end{itemize}
\end{defi}

Figure~\ref{fig:fullkfuzz} illustrates a neighborhood of the root in a full $4$-fuzz.

\begin{figure}[htbp]
\centering
\begin{tikzpicture}[
    every node/.style={
        circle,
        fill,
        inner sep=1pt
    },
    edge/.style={black}
]

\node[fill=none] at (3.5,0.5) {Full $4$-fuzz};
\node[fill=none] at (-4.8,-1.8) {Reduced $4$-fuzz};
\node[fill=none] at (4.6,-2) {Full $4$-fuzz};

\node[fill=white] (l3) at (-3,0) {};
\node (l2) at (-2,0) {};
\node (l1) at (-1,0) {};
\node[circle, draw, minimum size=2, fill=none] (r) at (0,0) {r};
\node (r1) at (1,0) {};
\node (r2) at (2,0) {};
\node[fill=white] (r3) at (3,0) {};

\draw (l2) to[out=35, in=145] (r);
\draw (l1) to[out=35, in=145] (r1);
\draw[dotted] (r) to[out=35, in=145] (r2);

\draw (l1) to (r);
\draw (r) to (r1);
\draw (r1) to (r2);
\draw[dotted] (l2) to (l1);

\draw[shorten >=18pt] (r1) to[out=35, in=145] (r3);
\draw[shorten >=13pt] (r2) to (r3);

\draw[shorten <=18pt] (l3) to[out=35, in=145] (l1);
\draw[shorten <=13pt] (l3) to (l2);

\node (q1) at (2.5,-1.5) {};
\node (qr) at (3.5,-1.5) {};
\node[fill=white] (ql) at (1.5,-1.5) {};
\node[fill=white] (qr2) at (4.5,-1.5) {};
\node[fill=white] (ql2) at (1,-1.5) {};
\draw[dotted] (q1) to (qr);
\draw[shorten <=5pt] (ql) to (q1);
\draw[shorten >=9pt] (qr) to (qr2);
\draw[shorten <=16pt] (ql2) to[out=35, in=145] (q1);
\draw[shorten >=13pt] (q1) to[out=35, in=145] (qr2);

\node[fill=white] (q2) at (2,-2.5) {};
\node[fill=white] (q3) at (3,-2.5) {};

\draw[shorten <=5pt, loosely dashed] (q1) to (q2);
\draw[shorten <=5pt, dashed] (q1) to(q3);

\node (p1) at (-2,-1.5) {};
\node (pl) at (-3,-1.5) {};
\node[fill=white] (pl2) at (-4,-1.5) {};
\node[fill=white] (pr2) at (-0.5,-1.5) {};
\draw (pl) to (p1);
\draw[shorten <=13pt] (pl2) to[out=35, in=145] (p1);
\draw[shorten >=17pt] (p1) to[out=35, in=145] (pr2);
\draw[shorten <=10pt] (pl2) to (pl);

\node[fill=white] (p2) at (-3,-2.5) {};
\node[fill=white] (p3) at (-2,-2.5) {};
\draw[shorten <=5pt, loosely dashed] (p1) to (p2);
\draw[shorten <=5pt, loosely dashed] (p1) to (p3);

\draw[dashed] (r)--(q1);
\draw[loosely dashed] (r)--(p1);

\end{tikzpicture}
\caption{A rooted full $K$-fuzz for $K=4$. Edges belonging to the initial full and reduced $K$-paths are drawn as solid lines. Edges cut off from these $K$-paths are shown as dotted lines, while the corresponding connections to other full $K$-fuzz are dashed. Loosely dashed edges represent shortcut connections from reduced $K$-fuzz.}
\label{fig:fullkfuzz}
\end{figure}

They use the theory of local convergence on the set of rooted locally finite graphs introduced by Benjamini and Schramm \cite{BenjaminiSchramm2001Recurrence} and Aldous and Steele \cite{AldousSteele2004ObjectiveMethod} to state their main theorem, which reads as follows.

\begin{thm}[{\cite[Theorem 3.1]{Alimohammadi2025LocalLimitsSmallWorld}}]We have
    \[
        \widetilde{WS}_n(K,p){\toninfty}F(K,p)\qquad\mbox{ in probability},
    \]where the convergence is the local convergence on the set of rooted locally finite graphs described in {\cite[Section 2]{Alimohammadi2025LocalLimitsSmallWorld}}.
\end{thm}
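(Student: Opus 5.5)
The statement is the local weak convergence theorem of Alimohammadi, Isik, and Saberi. The natural approach is a local exploration coupling in the sense of Benjamini--Schramm. We want to show that for every fixed radius $r$ and every finite rooted graph $H$, the fraction
\[
\frac1n\sum_{v\in V_n}\mathbf 1\{B_r(v)\cong H\}
\]
converges in probability to $\mathbb{P}(B_r(\text{root of }F(K,p))\cong H)$. This will be done in two parts: convergence of the expectation, which is a single-root statement for a uniform root $o$, and concentration, obtained from a second-moment computation with two independent uniform roots.

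For the first moment, explore $B_r(o)$ in $\widetilde{WS}_n(K,p)$ by breadth-first search. Around any explored vertex $v$, the ring-lattice edges to $v\pm1,\dots,v\pm K/2$ are present unless rewired; each right edge of $v$ is rewired independently with probability $p$. A rewired edge from $v$ to a uniform vertex $k\in V_n$ lands, with probability $1-O(1/n)$ per explored vertex, far (in ring distance) from everything already explored. That landing point is therefore a ``fresh'' location whose own neighbourhood is again a full $K$-path with independent rewirings, which gives the recursive full $K$-fuzz copy.

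Incoming shortcuts to $v$ come from the $nK/2$ right edges of other vertices. Each is rewired with probability $p$ and lands on $v$ with probability $1/n$, so the number of incoming shortcuts is $\binomial(nK/2,p/n)$, which tends to $\poisson(pK/2)$. The source $w$ of an incoming shortcut is uniform over the vertices, and one of its right edges is known to be rewired. Conditioning on this, we see one uniformly chosen right edge of $w$ removed, while the others are independently rewired with probability $p$. This is exactly the reduced $K$-path / reduced $K$-fuzz structure.

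To make this rigorous, couple the exploration step by step with the construction of $F(K,p)$ up to depth $r$. The explored set has size $O_r(1)$ with high probability, because the degrees are at most $K$ plus a tight Poisson-like count. The failure events are all collisions: a shortcut landing within distance $rK$ of an explored vertex, two incoming shortcuts sharing a source, and so on. Each has probability $O(|\text{explored}|^2 K r/n)\to0$. A total-variation estimate between binomial and Poisson closes the coupling.

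For concentration, explore from two independent uniform roots simultaneously. With high probability the two explorations stay disjoint and far apart, so they couple with two independent copies of $F(K,p)$. The indicator counts are then asymptotically uncorrelated, the variance is $o(1)$, and Chebyshev's inequality gives convergence in probability.

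The main obstacle is handling the dependencies in the incoming-shortcut counts. Conditioning on the already explored structure changes which edges are still available and biases the counts slightly. I would handle this by revealing, for every vertex, its rewiring indicators and targets lazily, and checking that conditioning on $O(1)$ revealed edges only perturbs the remaining binomial parameters by $O(1/n)$.
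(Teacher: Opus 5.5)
The paper gives no proof of this statement. It is quoted from \cite[Theorem 3.1]{Alimohammadi2025LocalLimitsSmallWorld} and used only to motivate the moment conjecture in Section~\ref{sec:limitingmoments}, so there is no in-paper argument to compare yours with.

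On its own terms, your outline is the standard and correct route to local convergence in probability. You identify $\lim \mathbb{P}(B_r(o)\cong H)$ by an exploration coupling with $F(K,p)$. You then get concentration from two independent roots whose explorations are disjoint with high probability, and conclude by Chebyshev. The identifications you make are the right ones:
\begin{itemize}
\item incoming shortcuts at a vertex number $\binomial(nK/2,p/n)\to\poisson(pK/2)$;
\item the source of an incoming shortcut, together with the index of its rewired edge, is uniform on $V_n\times\{1,\dots,K/2\}$, which produces exactly the reduced $K$-path;
\item the landing point of an outgoing shortcut still receives $\binomial(nK/2-1,p/n)$ further incoming shortcuts, so it is indeed the root of a fresh full $K$-fuzz. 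You use this last point implicitly; it is worth stating.
\end{itemize}

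Two things should be made explicit when you write it up.

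First, everything rests on one structural fact about $\widetilde{WS}_n(K,p)$. There the new endpoint $k$ is uniform on $V_n$ regardless of the current graph. Hence the $nK/2$ pairs (rewiring indicator, target) are i.i.d., the processing order in the definition is irrelevant, and the law is rotation invariant. This is what makes your lazy revealing clean and your $O(1/n)$ conditioning perturbations valid. It is precisely what fails for $WS_n(K,p)$, where the target must avoid current neighbours. That is why the paper can only conjecture the same limit for $WS_n$.

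Second, your collision bound $O(|\text{explored}|^2Kr/n)$ is useful only after truncating on the event that the explored set has size at most $M$. By tightness (for example Markov's inequality on $\mathbb{E}|B_r(o)|$, which is bounded uniformly in $n$), the complement of this event has probability $\delta(M)\to0$ uniformly in $n$. Let $n\to\infty$ first and then $M\to\infty$, both in the one-root coupling and in the two-root decorrelation step.
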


\subsection{Conjecture on the limiting moments}\label{subsec:conjlm}
To state our main conjecture, we first define what we call a closed (rooted) walk of length $k$ in a rooted graph $(G,o)$.
\begin{defi}[Closed walk]
    For any rooted graph $(G,o)$ and any $k\ge1$, a closed walk of length $k$ is a sequence of vertices $v_1,\dots,v_{k+1}$ such that $v_1=v_{k+1}$ and, for every $i\in\{1,\dots,k\}$, the edge $\{v_i,v_{i+1}\}$ is an edge of $G$. If, moreover, $v_1=o$, then the walk is called a closed rooted walk. The number of closed rooted walks of length $k$ in $(G,o)$ is denoted by $c_k(G,o)$.
\end{defi}

Our main conjecture is as follows.
\begin{conj}
    Let $K\ge1$ be an even integer and $p \in [0,1]$. Let also $A_n$ denote the adjacency matrix of $WS_n(K,p)$, for $n\in\mathbb{N}$. Then, for all $k\ge1$,
    \[
        \frac1n\tr(A_n^k)\toninfty\mathbb{E}\left[c_k(F(K,p))\right]\qquad\mbox{ in probability}.
    \]
\end{conj}

\begin{rk}This conjecture is supported by the following.
    \begin{enumerate}
        \item Since the full $K$-fuzz graph does not contain self-loops or multiple edges, it is natural to expect that such events occur with negligible probability in the construction of $\widetilde{WS}_n(K,p)$ as $n \to \infty$, and hence that $WS_n(K,p)$ and $\widetilde{WS}_n(K,p)$ have the same limiting eigenvalue distribution.

       \item For all $k \ge 1$ and all finite graphs $G$, the $k^\text{th}$ empirical moment of the adjacency matrix corresponds to the average number of closed walks of length $k$ starting at a vertex.

        \item For the limiting object $F(K,p)$, by symmetry, the expected number of closed walks of length $k$ starting at a given vertex does not depend on the chosen vertex, and therefore is equal to the number of closed rooted walks of length $k$, for all $k\ge1$.

        \item Finally, by construction, the degree of each vertex in $\widetilde{WS}_n(K,p)$ is the sum of $K/2$ and two independent Binomial random variables. As $n \to \infty$, this degree distribution converges to the sum of $K/2$, a $\binomial(K/2,1-p)$ random variable, and an independent $\poisson(pK/2)$ random variable. This suggests that degrees remain well-controlled in the large-$n$ limit. This provides additional heuristic support for the idea that the local convergence of $\widetilde{WS}_n(K,p)$ to $F(K,p)$ may extend to convergence of empirical spectral moments to the expected number of closed rooted walks in $F(K,p)$.
    \end{enumerate}

    Additionally, Subsection~\ref{subsec:momentssimulation} presents numerical simulations supporting this conjecture.
\end{rk}

\subsection{Number of closed rooted walks in $F(K,p)$.}\label{subsec:momentsformulas} In this subsection, we succinctly explain how we derive closed-form formulas for $\mathbb{E}[c_k(F(K,p))]$ for $k\le 5$. We also highlight the difficulties that arise when $k$ becomes larger.

For all $k\ge 0$, let $F(K,p,k)$ denote the subgraph of $F(K,p)$ induced by the vertices within graph distance at most $k$ from the root, and let $A_k$ denote its adjacency matrix, where the root is indexed first. We write $A_k = Z_k + U_k$, where $Z_k$ is the adjacency matrix of the subgraph of $F(K,p,k)$ containing only the edges that belong to one of the initial full $K$-paths or reduced $K$-paths used in the construction of $F(K,p)$. We denote by $\mathcal{Z}_k$ and $\mathcal{U}_k$ the two subgraphs of $F(K,p,k)$ whose adjacency matrices are $Z_k$ and $U_k$, respectively. For ease of notation, we omit the index $k$ when it is clear from the context. To compute $c_k(F(K,p))$, we use the following observations.
\begin{enumerate}
    \item Since a closed rooted walk of length $k$ only visits vertices within graph distance $k$ from the root, $c_k(F(K,p)) = (A_k^k)_{11}$.

    \item\label{rk:nosharedpaths}
    By construction, for all nodes $i$ of $F(K,p,k)$, there do not simultaneously exist paths from $i$ to the root of $F(K,p,k)$ in $\mathcal{Z}_k$ and in $\mathcal{U}_k$. In particular, for all $p,q\in\mathbb{N}$,
    \[
        (Z^pU^q)_{11} = (Z^p)_{11}\times(U^q)_{11}.
    \]

    \item \label{rk:utree}
    $\mathcal{U}_k$ is a tree, and, in particular, for all $q>0$ an odd number,
    \[
        (U^q)_{11} = 0.
    \]Additionally, for every node that is not a leaf, its offspring distribution is given by the sum of two independent random variables: a $\binomial(K/2,p)$ and a $\poisson(pK/2)$.

    \item \label{rk:dnfullkpath}
    Let $D_n$ denote the adjacency matrix of a regular ring lattice of degree $K$ on $n$ vertices. The number of closed rooted walks of length $k$ in a full $K$-path is equal to $(D_n^k)_{11}$ for $n$ sufficiently large compared to $k$. Additionally, by symmetry and since $D_n$ is deterministic, $(D_n^k)_{11} = \tau(D_n^k)$, where $\tau$ is defined in Section~\ref{sec:notation}. By Lemma~\ref{lm:circulantmatrix}, a straightforward calculation gives
    \begin{equation}\label{eq:taudn}
        \tau(D_n^k)
        = \int_0^1 \left(\frac{\sin((K+1)\pi x)}{\sin(\pi x)} - 1\right)^k dx
        = \frac{2^k}{2\pi} \int_0^{2\pi} \left(\sum_{s=1}^{K/2} \cos(sx)\right)^k dx.
    \end{equation}
\end{enumerate}

\subsubsection{First moment}
Since there are no self-loops in $F(K,p)$,
\[
    (A^1)_{11}=0.
\]

\subsubsection{Second moment}
$(A^2)_{11}$ corresponds to the degree of the root. By construction, since on average $K/2$ edges are cut off from the root in $F(K,p)$ and $K/2$ shortcuts are added,
\[
    \mathbb{E}[(A^2)_{11}]=K.
\]

\subsubsection{Third moment} We have
\[
    (A^3)_{11} = ((Z+U)^3)_{11} = (Z^3)_{11},
\]because, using observations \eqref{rk:nosharedpaths} and \eqref{rk:utree} above, and the fact that $(MN)_{11}=(NM)_{11}$ for all $n\times n$ symmetric matrices $M$ and $N$,
\[
    (U^3)_{11} = (ZU^2)_{11} = (Z^2U)_{11} = 0.
\]
Now, all closed rooted walks of length $3$ in $\mathcal{Z}$ use $3$ different edges, and therefore, by remark \eqref{rk:dnfullkpath},
\[
    \mathbb{E}[(Z^3)_{11}] = (1-p)^3\tau(D_n^3).
\]Using equation \eqref{eq:taudn}, we find $\tau(D_n^3) = \frac{3}{4}K(K-2)$. Therefore,
\[
    (A^3)_{11} = (1-p)^3\frac{3}{4}K(K-2).
\]

\begin{rk}
    It is worth mentioning that these conjectured first three moments correspond to those found by P. Nakkirt \cite{nakkirt2020eigenvalues}, which provides additional support for the conjecture.
\end{rk}

\subsubsection{Fourth moment}
Using the same technique as for the third moment, we obtain
\[
    (A^4)_{11} = (Z^4)_{11} + (U^4)_{11}+4(Z^2U^2)_{11}.
\]First, a straightforward calculation using the structure of $\mathcal{U}$ described in remark \eqref{rk:utree} above gives
\[
    \mathbb{E}[(U^4)_{11}] = 2K^2p^2+Kp(1-p).
\]Second, by counting the number of closed rooted walks of length $4$ in $\mathcal{Z}$ using one, two, three or four different edges, and by observation~\eqref{rk:dnfullkpath}, we obtain
\[
    \mathbb{E}[(Z^4)_{11}] = (1-p)K+(1-p)^22K(K-1) + (1-p)^3\times 0 + (1-p)^4(\tau(D_n^4)-2K^2+K).
\]
Finally, by observation~\eqref{rk:nosharedpaths}, $(Z^2U^2)_{11}=(Z^2)_{11}\times(U^2)_{11}$, which is equal to the product of the degree of the root in $\mathcal{Z}$ by the degree of the root in $\mathcal{U}$. Let $d$ denote the number of rightmost edges starting from the root in $F(K,p)$ that have been cut off. We have
\begin{align*}
    \mathbb{E}[(Z^2)_{11}\times(U^2)_{11}|d] &=\mathbb{E}[(Z^2)_{11}|d]\times\mathbb{E}[(U^2)_{11}|d]\\
    &=((1-p)K/2+(K/2-d))(pK/2+d)\\
    &= p(2-p)K^2/4+d\times2(1-p)K/2-d^2.
\end{align*}Since $d$ is a Binomial random variable with parameters $(K/2,p)$,
\[
    \mathbb{E}[(Z^2U^2)_{11}]=p(1-p)\left(K^2-\frac K2\right).
\]
Finally, we have
\begin{multline*}
    \mathbb{E}[(A^4)_{11}] = 2K^2p^2+Kp (1-p) + (1-p)K+(1-p)^22K(K-1) \\
    +(1-p)^4(\tau(D_n^4)-2K^2+K) + 4p(1-p)\left(K^2-\frac K2\right).
\end{multline*}

\subsubsection{Fifth moment} Using the same technique, we obtain
\[
    (A^5)_{11} = (Z^5)_{11}+5(Z^3U^2)_{11}.
\]First, for $(Z^5)_{11}$, we again count the number of closed rooted walks in a full $K$-path that use $1$ to $5$ different edges. We obtain
\[
    \mathbb{E}[(Z^5)_{11}]=(1-p)^35\tau(D_n^3)+(1-p)^43(K-2)\tau(D_n^3) + (1-p)^5\left(\tau(D_n^5) - (3K-1)\tau(D_n^3)\right).
\]
$(Z^3U^2)_{11}$ is equal to the number of closed rooted walks of length $5$ that use, in this order, $3$ edges in $\mathcal{Z}$ and $2$ in $\mathcal{U}$. By conditioning on a specific walk of length $3$ in $\mathcal{Z}$, we can deduce the average number of edges in $\mathcal{U}$ from the root. We obtain
\[
    (Z^3U^2)_{11} = (1-p)^3p\,\tau(D_n^3)(K-1).
\]Therefore, we have
\begin{multline*}
    \mathbb{E}[(A^5)_{11}]=(1-p)^35\tau(D_n^3)+(1-p)^43(K-2)\tau(D_n^3) + (1-p)^5\left(\tau(D_n^5) - (3K-1)\tau(D_n^3)\right)\\
    +5(1-p)^3p\,\tau(D_n^3)(K-1).
\end{multline*}

\subsubsection{Higher-order moments.}
For $k \ge 6$, several additional difficulties arise. First, we do not have a systematic combinatorial method to compute the expectation of $(Z^k)_{11}$. Indeed, we need to enumerate the number of closed rooted walks in $\mathcal{Z}$ of length $k$ that use only $m$ different edges, for $m\le k$, which appears to be more difficult. Second, the correlations between the number of walks in $\mathcal{Z}$ and the number of walks in $\mathcal{U}$ also become more difficult to handle. Finally, starting from $k=6$, certain terms in the expansion of $(A^k)_{11}$, such as $Z^2 U Z^2 U$, involve walks that traverse multiple full $K$-paths or reduced $K$-paths beyond the one containing the root. These terms introduce additional correlations between possible walks, and hence make the calculations harder.

In the following table, we have summarized and simplified the expressions in the formulas of the expected number of closed rooted walks of length $k$ in $F(K,p)$, for $k\in\{1,\dots,5\}$.

\begin{table}[H]
    \centering
    \renewcommand{\arraystretch}{1.6}
    \begin{tabular}{c || c}
        \hline
        $k$ & $\mathbb{E}[(A)^k_{11}]$ \\
        \hline
        1 & $0$ \\
        2 & $K$ \\
        3 & $\frac34(1-p)^3K(K-2)$ \\
        4 & $(1-p)^4\tau(D_n^4)+p(2-p)K\bigl[(2K-1)p^2 - (4K-2)p + 4K - 1\bigr]$\\
        5 & $(1-p)^5\tau(D_n^5)+\frac34p(1 - p)^3(K - 2)K\bigl[(1-3K)p + 8K - 1\bigr]$\\
        \hline
    \end{tabular}

    \caption{The expectation of the number of closed rooted walks of length $k$ in $F(K,p)$ for $k\in\{1,\dots,5\}$, and for $p\in[0,1]$ and $K\ge1$ an even integer. We conjectured that those values correspond to the first five limiting moments of the empirical eigenvalue distribution of $WS_n(K,p)$. An explicit analytic formula for $\tau(D_n^k)$ is given in~\eqref{eq:taudn}.}
    \label{tab:limitingmoments}
\end{table}

\subsection{Simulations}\label{subsec:momentssimulation}
We performed numerical simulations to illustrate and provide numerical support for the conjecture. These simulations consist of two complementary settings.

In the first setting, we estimate the third, fourth, and fifth moments of the empirical eigenvalue distribution of adjacency matrices of randomly generated Watts--Strogatz random graphs, for several values of $K$ and a range of values of $p$. These empirical moments are then compared with the corresponding expected number of closed rooted walks of the same lengths in $F(K,p)$ (see Figure~\ref{fig:momentssimulation}).

More precisely, we fix $n=3500$ and generate one realization of $WS_n(K,p)$ for each choice of $K \in {4,12,50}$ and ten values of $p$ regularly spaced in $[0,1]$. For each configuration, we compute the empirical moments for $k \in \{3,4,5\}$, and compare them with the corresponding conjectured quantities derived in Subsection~\ref{subsec:momentsformulas}. The results are displayed in Figure~\ref{fig:momentssimulation}, where we plot the empirical values together with the conjectured curves.

For $K \in \{4,12\}$, the empirical points closely follow the conjectured curves across all values of $p$. For $K=50$, a visible discrepancy appears, which is consistent with finite-size effects, as $K$ is no longer small compared to $n$. In particular, increasing $n$ significantly increases the computational cost, making this regime more difficult to explore numerically.

To further investigate this behavior, we consider a second setting, illustrated in Figure~\ref{fig:convergencepfixed}, where we study the convergence of empirical moments as $n$ increases for fixed values of $K$, $p$, and $k$. We consider two configurations: (\subref{subfig:convergengeK50}) $K=50$, $p=0.3$, $k=4$, and (\subref{subfig:convergengeK100}) $K=100$, $p=0.7$, $k=5$. For each configuration, we compute empirical moments for $n \in \{800,1600,3200,6400\}$, each averaged over four independent realizations.

In both cases, the empirical values are well approximated by a curve of the form $T+C/n$, where $T$ denotes the conjectured limiting value and $C$ is chosen by least squares fitting. The relative error between the empirical values and the fitted curve remains below $6 \times 10^{-3}$. The observed convergence towards $T$ is consistent with the conjectured limiting behavior.
\begin{figure}[htbp]
    \centering

    \begin{subfigure}{0.3\textwidth}
        \centering
        \includegraphics[width=\textwidth]{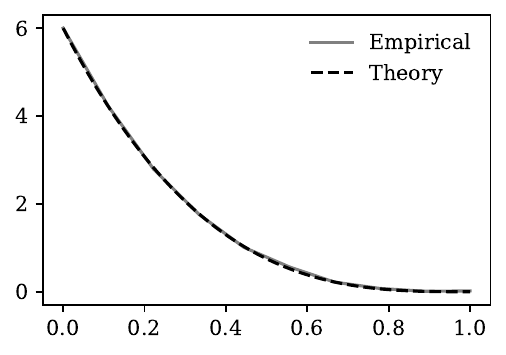}
        \caption{$K=4$, $k=3$}
    \end{subfigure}
    \hfill
    \begin{subfigure}{0.3\textwidth}
        \centering
        \includegraphics[width=\textwidth]{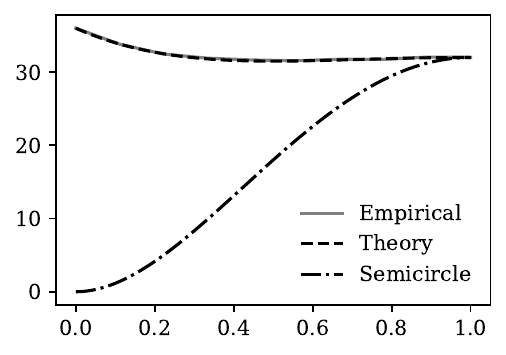}
        \caption{$K=4$, $k=4$}
    \end{subfigure}
    \hfill
    \begin{subfigure}{0.3\textwidth}
        \centering
        \includegraphics[width=\textwidth]{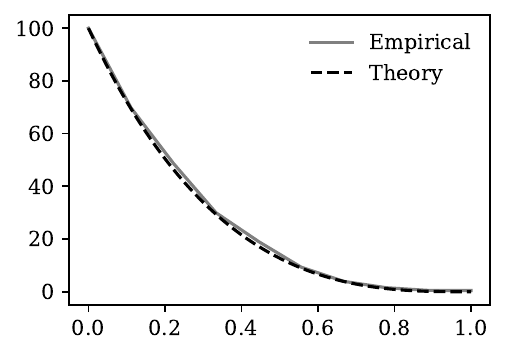}
        \caption{$K=4$, $k=5$}
    \end{subfigure}

    \vspace{0.5em}

    \begin{subfigure}{0.3\textwidth}
        \centering
        \includegraphics[width=\textwidth]{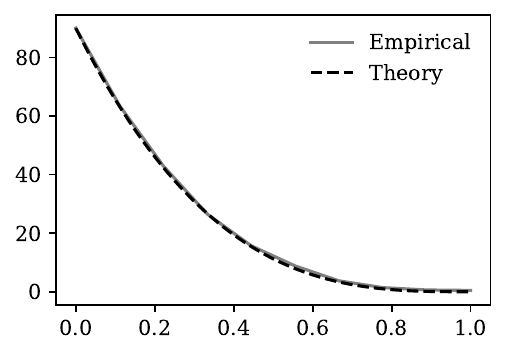}
        \caption{$K=12$, $k=3$}
    \end{subfigure}
    \hfill
    \begin{subfigure}{0.3\textwidth}
        \centering
        \includegraphics[width=\textwidth]{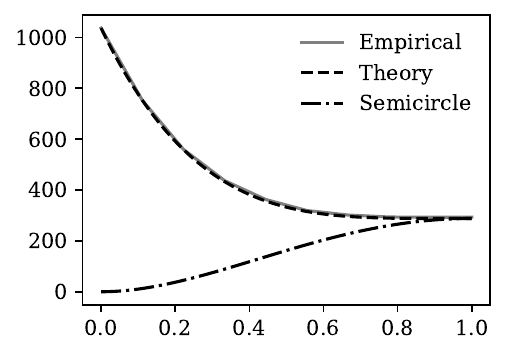}
        \caption{$K=12$, $k=4$}
    \end{subfigure}
    \hfill
    \begin{subfigure}{0.3\textwidth}
        \centering
        \includegraphics[width=\textwidth]{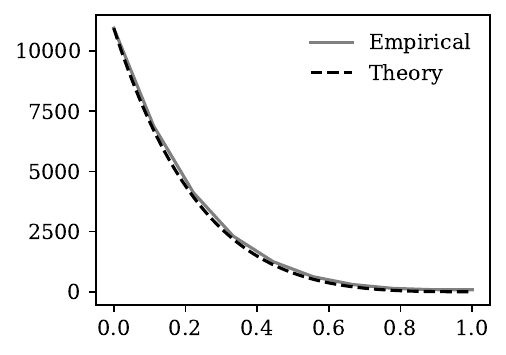}
        \caption{$K=12$, $k=5$}
    \end{subfigure}

    \vspace{0.5em}

    \begin{subfigure}{0.3\textwidth}
        \centering
        \includegraphics[width=\textwidth]{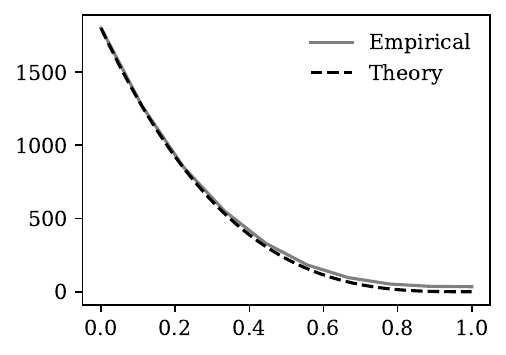}
        \caption{$K=50$, $k=3$}
    \end{subfigure}
    \hfill
    \begin{subfigure}{0.3\textwidth}
        \centering
        \includegraphics[width=\textwidth]{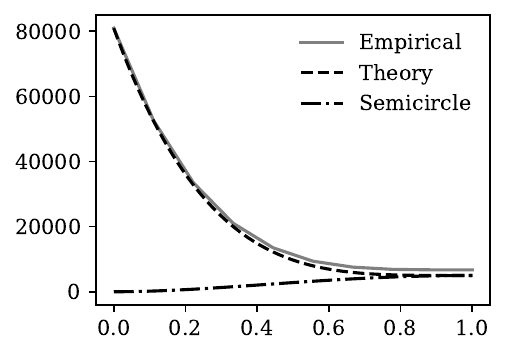}
        \caption{$K=50$, $k=4$}
    \end{subfigure}
    \hfill
    \begin{subfigure}{0.3\textwidth}
        \centering
        \includegraphics[width=\textwidth]{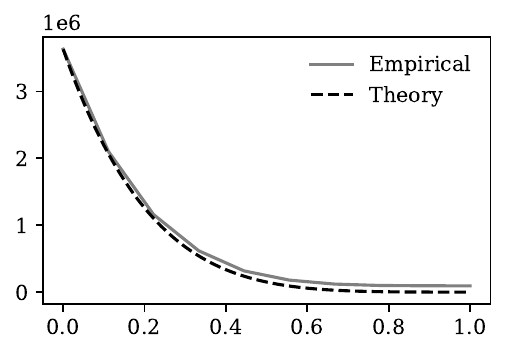}
        \caption{$K=50$, $k=5$}
    \end{subfigure}

    \caption{Empirical and theoretical limiting moments (for $k \le 5$) as functions of $p$, for different values of $K$. For $k=4$, we also plot the corresponding moments of the semicircle law, which are zero for odd orders. These moments are scaled by a factor $\left(\sqrt{Kp(2-p)}\right)^k$.}
    \label{fig:momentssimulation}
\end{figure}

\begin{figure}[htbp]
    \centering

    \begin{subfigure}{0.48\textwidth}
        \centering
        \includegraphics[width=\textwidth]{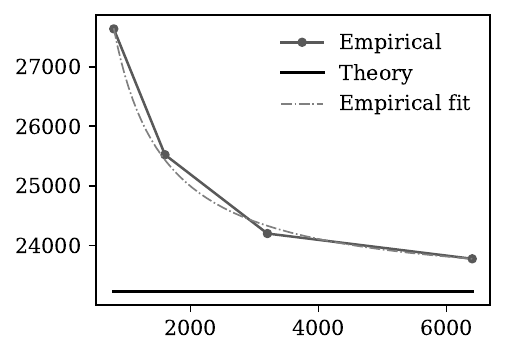}
        \caption{$K=50$, $k=4$, $p=0.3$}
        \label{subfig:convergengeK50}
    \end{subfigure}
    \hfill
    \begin{subfigure}{0.48\textwidth}
        \centering
        \includegraphics[width=\textwidth]{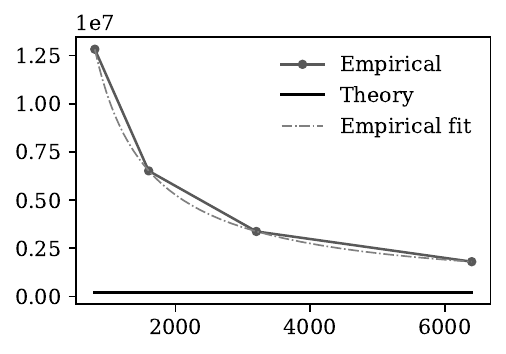}
        \caption{$K=100$, $k=5$, $p=0.7$}
        \label{subfig:convergengeK100}
    \end{subfigure}
    \caption{Empirical moments as functions of $n$, for fixed values of $K$, $p$, and $k$, illustrating convergence towards the corresponding theoretical value.}
    \label{fig:convergencepfixed}
\end{figure}
\FloatBarrier
\appendix

\section{Useful results from Bai and Silverstein}\label{sec:tools}

We recall three technical results from Bai and Silverstein \cite{bai_silverstein_2010} that are used in the proofs of our main results.  

\begin{thm}[{\cite[Theorem A.43]{bai_silverstein_2010}}]\label{thm:thm43}
    Let $(A_n)_{n\ge1}$ and $(B_n)_{n\ge1}$ be two sequences of random matrices, where for each $n$, both $A_n$ and $B_n$ are $n \times n$ Hermitian random matrices. If 
    \[
        \frac{1}{n}\operatorname{rank}(A_n-B_n)\toninfty0\qquad\mbox{ in probability},
    \]then
    \[
        \mu_{A_n}-\mu_{B_n}\toninfty0\qquad\mbox{ weakly in probability}.
    \]
\end{thm}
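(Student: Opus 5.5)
The plan is to prove the rank inequality at the level of distribution functions, and then pass to weak convergence in probability. Write $F_A(x)=\mu_A((-\infty,x])$ for the empirical distribution function of an $n\times n$ Hermitian matrix $A$. The key deterministic claim is
\[
\sup_{x\in\mathbb{R}}|F_{A}(x)-F_{B}(x)|\le \frac{1}{n}\operatorname{rank}(A-B).
\]
Weak convergence then follows, because uniform closeness of distribution functions controls the L\'evy distance: $L(\mu_A,\mu_B)\le\|F_A-F_B\|_\infty$.

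For the deterministic claim, set $r=\operatorname{rank}(A-B)$. I would prove it through Cauchy interlacing, in two steps.

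\textbf{Rank-one case.} First suppose $r=1$, so that $A-B=\pm vv^*$. Weyl interlacing for rank-one perturbations says the ordered eigenvalues satisfy $\lambda_{i}(B)\le\lambda_i(A)\le\lambda_{i+1}(B)$, or the reverse, depending on the sign. Consequently, for every $x$ the counting functions $\#\{i:\lambda_i(A)\le x\}$ and $\#\{i:\lambda_i(B)\le x\}$ differ by at most $1$.

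\textbf{General rank.} For general $r$, decompose $A-B$ spectrally as a sum of $r$ Hermitian rank-one matrices. Then move from $B$ to $A$ through intermediate Hermitian matrices, each differing from the previous one by a rank-one term. Applying the rank-one bound at each step and using the triangle inequality gives the bound $r/n$.

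\textbf{An alternative for the general step.} One can also argue via Courant--Fischer. Restrict both quadratic forms to $\ker(A-B)$, which has codimension $r$. On that subspace $A$ and $B$ agree, so the min-max characterization gives $\lambda_{i}(A)\le\lambda_{i+r}(B)$ and $\lambda_i(B)\le\lambda_{i+r}(A)$, which is the same conclusion directly.

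\textbf{Passing to probability.} Finally, for random matrices, the deterministic bound gives $\|F_{A_n}-F_{B_n}\|_\infty\le \frac1n\operatorname{rank}(A_n-B_n)\to0$ in probability. Hence $L(\mu_{A_n},\mu_{B_n})\to0$ in probability. For bounded Lipschitz test functions $\phi$, integration by parts gives
\[
\left|\int\phi\,d\mu_{A_n}-\int\phi\,d\mu_{B_n}\right|\le\|\phi\|_{\mathrm{Lip}}\cdot(\text{L\'evy/Kolmogorov distance})\cdot(\text{support length}).
\]
Here I am unsure, because the support is not bounded, so it is cleaner to use the L\'evy metric directly. That metric metrizes weak convergence, which is exactly the meaning of "weakly in probability" used here.

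The main obstacle is the linear-algebra interlacing step, in particular getting the index shift by exactly $r$. The Courant--Fischer restriction argument handles this most cleanly, so I would use it.
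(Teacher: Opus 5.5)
Your argument is correct and is essentially the standard proof behind the cited \cite[Theorem A.43]{bai_silverstein_2010}, which the paper only quotes: interlacing (equivalently, compressing $A$ and $B$ to $\ker(A-B)$, where they coincide) gives $\sup_x|F_A(x)-F_B(x)|\le\frac1n\operatorname{rank}(A-B)$, and $L\le\|F_A-F_B\|_\infty$ transfers this to the L\'evy distance. Your hesitation in the last step is well founded, but it is not a gap in your argument: the statement is false if tested against all bounded continuous (even bounded $1$-Lipschitz) functions without tightness --- e.g.\ $B_n=\diag(2,4,\dots,2n)$ and a rank-one perturbation $A_n=B_n+vv^*$ with spectrum $\{3,5,\dots,2n+1\}$ (which exists by strict interlacing) differ by $1$ against the sawtooth equal to $1$ on odd and $0$ on even integers --- so the L\'evy-distance reading you adopt is the right one, not merely because $L$ metrizes the weak topology, but because it is the sense in which the theorem actually holds and it suffices when, as in the paper, such differences are chained to a deterministic weak limit.
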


\begin{thm}[{\cite[Theorem A.45, Remark A.40]{bai_silverstein_2010}}]\label{thm:thm45}
    Let $(A_n)_{n\ge1}$ and $(B_n)_{n\ge1}$ be two sequences of random matrices, where for each $n$, both $A_n$ and $B_n$ are $n \times n$ Hermitian random matrices. If
    \[
        \|A_n-B_n\|\toninfty0\qquad\mbox{ in probability},
    \]then
    \[
        \mu_{A_n}-\mu_{B_n}\toninfty0\qquad\mbox{ weakly in probability}.
    \]
\end{thm}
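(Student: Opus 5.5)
The plan is to reduce everything to a deterministic eigenvalue perturbation inequality and then pass to probability. The key deterministic tool is Weyl's inequality. If $A,B$ are $n\times n$ Hermitian with eigenvalues ordered increasingly, $\lambda_1(A)\le\dots\le\lambda_n(A)$ and $\lambda_1(B)\le\dots\le\lambda_n(B)$, then
\[
\max_{1\le i\le n}|\lambda_i(A)-\lambda_i(B)|\le\|A-B\|.
\]
I would prove this from the Courant--Fischer min-max characterization, since $B-\|A-B\|I\preceq A\preceq B+\|A-B\|I$ in the Loewner order.

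Next I would transfer this to the empirical measures by pairing eigenvalues in increasing order. For a bounded Lipschitz function $f$ with Lipschitz constant $L_f$,
\[
\Bigl|\int f\,d\mu_{A}-\int f\,d\mu_{B}\Bigr|\le\frac1n\sum_{i=1}^n|f(\lambda_i(A))-f(\lambda_i(B))|\le L_f\,\|A-B\|.
\]
Similarly, the distribution functions satisfy $F_{\mu_B}(x-\delta)\le F_{\mu_A}(x)\le F_{\mu_B}(x+\delta)$ with $\delta=\|A-B\|$. This gives the Lévy distance bound $L(\mu_A,\mu_B)\le\|A-B\|$, which is the form in which Bai--Silverstein state the result. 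Applying either bound to $A_n,B_n$ and using $\|A_n-B_n\|\to0$ in probability shows that $\int f\,d\mu_{A_n}-\int f\,d\mu_{B_n}\to0$ in probability for every bounded Lipschitz $f$, and that $L(\mu_{A_n},\mu_{B_n})\to0$ in probability.

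The main obstacle is upgrading from bounded Lipschitz test functions to arbitrary bounded continuous ones. Without tightness this upgrade can genuinely fail: eigenvalues escaping to infinity, together with an oscillating $f$, can break uniform continuity. I would therefore interpret ``$\mu_{A_n}-\mu_{B_n}\to0$ weakly in probability'' via the Lévy (or bounded-Lipschitz) metric, as Bai--Silverstein do.

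This interpretation is enough in every use made in the paper, because there one of the two sequences has a deterministic tight limit (for instance $\delta_0$ or $\mu_{\mathrm{SC}}$), or is tight in probability. In that case, given a bounded continuous $f$ and $\eps>0$, I would proceed as follows:
\begin{enumerate}
\item Choose $R$ so that, with probability at least $1-\eps$, both measures put mass at most $\eps$ outside $[-R,R]$. For $\mu_{A_n}$ this transfers from $\mu_{B_n}$ by the Lévy bound.
\item Approximate $f$ uniformly on $[-R-1,R+1]$ by a Lipschitz function with the same sup-norm bound.
\item Apply the Lipschitz estimate above to that approximation.
\end{enumerate}
This completes the argument.
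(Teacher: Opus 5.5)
Your proposal is correct and is essentially the argument behind the cited result: the paper gives no proof of its own but invokes Bai--Silverstein's Theorem A.45, namely $L(\mu_A,\mu_B)\le\|A-B\|$, which is proved exactly as you do, by Weyl's inequality with eigenvalues paired in increasing order. Your caveat is also correct and worth keeping: read literally against all bounded continuous test functions, the statement fails without tightness (for $A_n=nI_n$, $B_n=(n+\frac{1}{2n})I_n$ and $f(x)=\cos(\pi x^2)$ one has $|f(n)-f(n+\frac{1}{2n})|\to 2$), so either the L\'evy-metric reading used by Bai--Silverstein or the tightness available in all of the paper's applications is genuinely needed.
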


\begin{thm}[{\cite[Corollary A.41]{bai_silverstein_2010}}]\label{thm:cor41}
    Let $(A_n)_{n\ge1}$ and $(B_n)_{n\ge1}$ be two sequences of random matrices, where for each $n$, both $A_n$ and $B_n$ are $n \times n$ Hermitian random matrices. If
    \[
        \frac{1}{n}\tr\left((A_n-B_n)^2\right)\toninfty0\qquad\mbox{ in probability},
    \]then
    \[
        \mu_{A_n}-\mu_{B_n}\toninfty0\qquad\mbox{ weakly in probability}.
    \]
\end{thm}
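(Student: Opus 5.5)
The plan is to reduce the statement to a deterministic eigenvalue inequality and then add probability at the very end. Since the hypothesis controls only the Frobenius-type quantity $\frac1n\tr((A_n-B_n)^2)$, the natural tool is the Hoffman--Wielandt inequality for Hermitian matrices. Let $\lambda_1\ge\dots\ge\lambda_n$ be the eigenvalues of $A_n$ and $\nu_1\ge\dots\ge\nu_n$ those of $B_n$, both in decreasing order. Then
\[
    \sum_{i=1}^n(\lambda_i-\nu_i)^2\le\tr\bigl((A_n-B_n)^2\bigr).
\]
I would either quote this or prove it briefly. For the proof, expand $\tr((A-B)^2)=\sum\lambda_i^2+\sum\nu_i^2-2\tr(AB)$, and bound $\tr(AB)\le\sum_i\lambda_i\nu_i$ (von Neumann's trace inequality). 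The latter follows by writing $\tr(AB)=\sum_{i,j}\lambda_i\nu_j|\langle u_i,v_j\rangle|^2$, noting that the matrix $(|\langle u_i,v_j\rangle|^2)_{ij}$ is doubly stochastic, and maximizing a linear functional over the Birkhoff polytope at a permutation. The rearrangement inequality then shows the identity permutation is optimal.

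Next I convert this into closeness of the empirical measures. Convergence of $\mu_{A_n}-\mu_{B_n}$ to $0$ weakly in probability is to be understood as in Bai--Silverstein: for every bounded Lipschitz $f$ we need $\int f\,d\mu_{A_n}-\int f\,d\mu_{B_n}\to0$ in probability, or equivalently that the Lévy distance goes to $0$ in probability. For $f$ bounded and $L$-Lipschitz, the Cauchy--Schwarz inequality gives
\[
    \Bigl|\int f\,d\mu_{A_n}-\int f\,d\mu_{B_n}\Bigr|\le\frac1n\sum_{i=1}^n|f(\lambda_i)-f(\nu_i)|\le L\Bigl(\frac1n\sum_{i=1}^n(\lambda_i-\nu_i)^2\Bigr)^{1/2}\le L\Bigl(\frac1n\tr\bigl((A_n-B_n)^2\bigr)\Bigr)^{1/2}.
\]
The right-hand side is a continuous function of a quantity tending to $0$ in probability, so the left-hand side tends to $0$ in probability.

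If one wants the Lévy-metric form instead, the same pairing gives the bound
\[
    L(\mu_{A_n},\mu_{B_n})^3\le\frac1n\sum_i(\lambda_i-\nu_i)^2 .
\]
To see this, note that if $L>\eps$ then at some point $x$ the distribution functions differ by more than $\eps$ even after an $\eps$-shift. This forces at least $n\eps$ indices $i$ with $|\lambda_i-\nu_i|\ge\eps$, which contributes at least $\eps^3$ to the normalized sum. This is the version stated as Corollary A.41.

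The only genuinely substantive step is the Hoffman--Wielandt (von Neumann) inequality, specifically the doubly stochastic/Birkhoff argument identifying the sorted pairing as optimal. Everything afterwards is Cauchy--Schwarz and the definition of convergence in probability. One point needs care: the statement concerns a difference of measures, neither of which is assumed tight. I would therefore fix the meaning of ``weakly in probability'' via bounded Lipschitz test functions, or via the Lévy distance, rather than arbitrary bounded continuous functions. This is also the form in which the result is applied in the proofs of Theorem~\ref{thm:coupling} and Lemmas~\ref{lm:replaceU} and~\ref{lm:pequal1}.
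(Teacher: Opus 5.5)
Your proposal is correct and is essentially the standard proof behind the cited Corollary~A.41 of Bai--Silverstein (the paper itself only quotes the result without proof): Hoffman--Wielandt for the sorted eigenvalues, then $L^3(\mu_{A_n},\mu_{B_n})\le\frac1n\sum_i(\lambda_i-\nu_i)^2\le\frac1n\tr((A_n-B_n)^2)$, with your Cauchy--Schwarz step also giving the bounded-Lipschitz form. Your caution about test functions is justified, since with arbitrary bounded continuous $f$ the statement fails for non-tight sequences; one small correction is that the L\'evy and bounded-Lipschitz formulations are not actually equivalent without tightness, but this is harmless because you prove both bounds.
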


\section{Proof of Proposition~\ref{prop:kpto0}}\label{sec:kpto0}

This section is devoted to the proof of Proposition~\ref{prop:kpto0}.

\begin{proof}[Proof of proposition~\ref{prop:kpto0}]
    Let $(K_n)_{n\ge1}$ denote a sequence of even numbers that satisfy $0<K_n<n$, for all sufficiently large values of $n$, and let $(p_n)_{n\ge1}$ denote a sequence of numbers in $[0,1]$. We assume
    \[
        p_nK_n\toninfty0.
    \]
    Define $D_n$ as the adjacency matrix of the deterministic ring lattice with degree $K_n$. Let $A_n$ denote the adjacency matrix of $WS_n(K_n,p_n)$.
    
    Notice that
    \begin{equation}\label{eq:ranklessN}
        \operatorname{rank}(A_n-D_n)\le\nnz(A_n-D_n),
    \end{equation}where $\nnz(A_n-D_n)$ is the number of nonzero entries in $A_n-D_n$. By construction, the matrix $A_n-D_n$ has nonzero entries only where edges have been rewired. Each rewiring operation removes one edge and adds another, contributing at most 4 nonzero entries to $A_n-D_n$ (2 for the cut-off edge and 2 for the added edge, due to symmetry). Thus,
    \[
        \nnz(A_n-D_n) \leq 4 \times \mbox{(number of cut-off edges)}.
    \]Each of the $nK_n$ edges from the initial ring lattice is cut off independently with probability $p_n$. Hence, the number of cut-off edges is a Binomial random variable with parameters $(nK_n/2,p_n)$. Therefore, using \eqref{eq:ranklessN}, we get
    \[
        \mathbb{E}\left[\frac1n\operatorname{rank}(A_n-D_n)\right]\le\frac1n\mathbb{E}[\nnz(A_n-D_n)] \le 2p_nK_n\toninfty0.
    \]Theorem~\ref{thm:thm43} states that
    \[
        \mu_{\frac{A_n}{\sqrt{p_nK_n}}}-\mu_{\frac{D_n}{\sqrt{p_nK_n}}}\toninfty0\qquad\mbox{ weakly in probability.}
    \]Now, by Lemma~\ref{lm:circulantmatrix},
    \[
        \mu_{\frac{D_n}{\sqrt{p_nK_n}}}\toninfty\delta_0\qquad\mbox{ weakly in probability.}
    \]
\end{proof}

\section{Replacement principle} \label{sec:replacement}

This section is devoted to the proof of Theorem \ref{thm:replacement} below.  We begin with a few definitions. 

\begin{defi}[Inhomogeneous Wigner matrix]
    We say a random $n \times n$ real symmetric matrix $X_n$ is an \emph{inhomogeneous Wigner matrix} if the entries $\{(X_n)_{ij} : 1 \leq i \leq j \leq n\}$ are jointly independent random variables, each having finite variance and mean zero.  
\end{defi}

\begin{defi}[Sparsity profile]
    We say a deterministic $n \times n$ real symmetric matrix $A_n$ is a \emph{sparsity profile} if its entries take only the values $0$ or $1$. 
\end{defi}

In order to state our results in the generality we require, we will consider a model of the form $A_n \odot X_n + B_n$, where $A_n$ is a sparsity profile, $X_n$ is an inhomogeneous Wigner matrix and $B_n$ is any deterministic matrix.  
Here, $A_n\odot X_n$ denotes the Hadamard product of $A_n$ and $X_n$ which is the entrywise product given by 
\[ (A_n \odot X_n)_{ij} = (A_n)_{ij} (X_n)_{ij} \]
for all $1 \leq i, j \leq n$. 
Thus, the matrix $A_n$ determines which entries of $X_n$ are zero deterministically. 
The matrix $B_n$ allows the entries to have nonzero mean. 
It may seem a little strange to write things in this way.  
Indeed, $A_n \odot X_n$ is just another inhomogeneous Wigner matrix.  
However, the matrix $A_n$ allows us to naturally index which terms of the sum do not contain any randomness. 

In our main result below, we will show that if $A_n$ is a sparsity profile and $X_n$ and $Y_n$ are inhomogeneous Wigner matrices, then the asymptotic behavior of the eigenvalues of $A_n \odot X_n + B_n$ is the same as the eigenvalue behavior of $A_n \odot Y_n + B_n$, provided the entries of $X_n$ and $Y_n$ match a certain number of moments. 

\begin{defi}[Match moments] \label{def:moments}
    We say two $n \times n$ inhomogeneous Wigner matrices $X_n$ and $Y_n$ \emph{match moments} if $(X_n)_{ij}$ and $(Y_n)_{ij}$ have the same variance for each $1 \leq i < j \leq n$. 
\end{defi}

We note that we do not require any moment matching for the diagonal entries of $X_n$ and $Y_n$.
We can now state the main result of this appendix. 

\begin{thm}[Replacement principle]\label{thm:replacement}
    Let $(A_n)_{n\ge1}$, $(B_n)_{n\ge1}$, $(X_n)_{n\ge1}$, and $(Y_n)_{n\ge1}$ be sequences of $n \times n$ real symmetric matrices, where $A_n$ is a deterministic sparsity profile, $B_n$ is deterministic, and $X_n$ and $Y_n$ are both inhomogeneous Wigner matrices.  
    Let $(K_n)_{n\ge1}$ be a sequence of positive real numbers. 
    Assume:
    \begin{enumerate}
        \item (Moment matching) For each $n \geq 1$, $X_n$ and $Y_n$ match moments per Definition \ref{def:moments}.
        \item (Sparsity profile) We have
        \begin{equation} \label{assump:sparsity}
            \limsup_{n \to \infty} \frac{1}{n K_n} \sum_{1 \leq i < j \leq n} (A_n)_{ij} \left( \mathbb{E} |(X_n)_{ij}|^2 + \mathbb{E}  |(Y_n)_{ij}|^2 \right) < \infty.
        \end{equation}
        \item (Lindeberg's condition) For every $\eps > 0$
        \begin{equation} \label{eq:lind}
            \lim_{n \to \infty} \left[ \frac{1}{nK_n} \sum_{1 \leq i \leq j \leq n} (A_n)_{ij} \left( \mathbb{E} \left[ (X_n)_{ij}^2 \chi_{\{|(X_n)_{ij}| > \eps \sqrt{K_n}\} } \right] + \mathbb{E} \left[ (Y_n)_{ij}^2 \chi_{\{|(Y_n)_{ij}| > \eps \sqrt{K_n}\} } \right] \right) \right] = 0,
        \end{equation}
        where $\chi_E$ denotes the indicator function of the event $E$. 
    \end{enumerate}
    Then 
    \[ \mu_{\frac{1}{\sqrt{K_n}} A_n \odot X_n + B_n} - \mu_{\frac{1}{\sqrt{K_n}} A_n \odot Y_n + B_n} \toninfty 0 \qquad\mbox{ weakly in probability}.\]
\end{thm}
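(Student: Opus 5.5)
We will prove the replacement principle by the Stieltjes transform method combined with a Lindeberg-type swapping argument. Write $M_X = \frac{1}{\sqrt{K_n}} A_n\odot X_n + B_n$ and $M_Y$ similarly. Since weak convergence in probability of the difference of empirical measures follows from pointwise convergence in probability of the difference of Stieltjes transforms $s_X(z)-s_Y(z)$, where $s_X(z) = \frac1n\tr(M_X-z)^{-1}$, for each fixed $z$ with $\Im z>0$, it suffices to prove this.

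\textbf{Truncation.} First we truncate. Using the Lindeberg condition \eqref{eq:lind}, choose $\eps_n\to0$ slowly so that the Lindeberg sum with $\eps_n$ still tends to $0$. Replace each entry $(X_n)_{ij}$ by $\hat X_{ij} = X_{ij}\chi_{\{|X_{ij}|\le \eps_n\sqrt{K_n}\}} - \mathbb{E}[\cdots]$, and likewise for $Y$. Using Theorem~\ref{thm:cor41}, we control $\frac1n\tr(M_X-M_{\hat X})^2 = \frac{1}{nK_n}\sum (A_n)_{ij}|X_{ij}-\hat X_{ij}|^2$. Its expectation is bounded by the Lindeberg sum (the centering correction is bounded by the same quantity via Cauchy--Schwarz), so it tends to zero in probability. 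Diagonal entries are handled within this step, or simply removed: the diagonal contribution $\frac{1}{nK_n}\sum_i \mathbb{E}X_{ii}^2$ is not controlled by \eqref{assump:sparsity}. If it is not controlled by the Lindeberg hypothesis after truncation, we instead use that $\frac{1}{\sqrt{K_n}}\mathrm{diag}$ of truncated entries has norm at most $\eps_n$ and apply Theorem~\ref{thm:thm45}. After truncation the variances match only approximately, but the total variance mismatch, weighted by $\frac1{nK_n}$, also tends to zero by the Lindeberg condition. This lets us rescale or absorb the mismatch at negligible cost.

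\textbf{Swapping.} Next we perform the swap on the expectation. Order the pairs $i\le j$ with $(A_n)_{ij}=1$ and replace the entries of $\hat X$ by those of $\hat Y$ one at a time. For a single swap, write $R(t)=(W+tE_{ij}-z)^{-1}$, where $E_{ij}$ is the symmetric elementary matrix and $W$ contains all other entries. Taylor expand $f(t)=\frac1n\tr R(t)$ to third order in $t=\xi/\sqrt{K_n}$. The zeroth-order term is common to both, the first-order term vanishes since the means are zero, and the second-order terms agree by variance matching, which holds by independence of $\xi$ from $W$. The derivatives satisfy $|f'''(t)|\le C/(n(\Im z)^4)$ by resolvent identities. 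The error per swap is therefore $\frac{C}{nK_n^{3/2}}\mathbb{E}|\xi|^3 \le \frac{C\eps_n}{nK_n}\mathbb{E}|\xi|^2$, using the truncation. Summing over all entries and using \eqref{assump:sparsity} gives $|\mathbb{E}s_X-\mathbb{E}s_Y|\le C\eps_n\to0$.

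\textbf{Concentration.} Finally, we pass from expectations to convergence in probability. The function $s_X(z)$ concentrates around its mean by the standard martingale (Efron--Stein or McDiarmid over rows) bound: changing row $i$ and column $i$ is a rank-at-most-two perturbation, changing $s$ by at most $\frac{C}{n\Im z}$. This gives $\var(s_X(z))=O(1/n)$.

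\textbf{Main obstacle.} The delicate step is the truncation and the bookkeeping of the variance mismatch after truncation and recentering, together with the treatment of the diagonal, which has no moment-matching assumption. I would handle the diagonal via the norm bound after truncation, since the Lindeberg sum includes $i=j$.
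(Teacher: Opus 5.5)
Your proposal is correct and follows essentially the same route as the paper: reduction to Stieltjes transforms, truncation at level of order $\eps\sqrt{K_n}$ justified by the Lindeberg condition together with Theorem~\ref{thm:cor41}, McDiarmid-type concentration of the Stieltjes transform, and an entry-by-entry Lindeberg swap using a third-order Taylor expansion and the sparsity assumption. The differences are only technical. You let $\eps_n\to0$ and recenter the truncated entries, so the first-order term vanishes exactly; the paper instead keeps $\eps$ fixed, shows a $\limsup$ bound of $C\eps$, and bounds both the first- and second-order mismatches by the Lindeberg tail terms. You also discard the truncated diagonal with the operator-norm bound of Theorem~\ref{thm:thm45}, whereas the paper uses a zeroth-order expansion that costs $C\eps/n$ per diagonal entry.
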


Intuitively, $K_n$ measures the sparsity of $A_n$.  For instance, if $A_n$ is a band matrix, then $A_n \odot X_n$ and $A_n \odot Y_n$ are random band matrices, and $K_n$ can be taken to be proportional to the band width.
On the other hand, if every entry of $A_n$ is one, we would take $K_n = n$ and the scaling factor $\frac{1}{\sqrt{K_n}} = \frac{1}{\sqrt{n}}$ is the standard scaling for Wigner matrices (such as the GOE). 

The proof of Theorem \ref{thm:replacement} is based on Lindeberg's replacement trick \cite{MR1544569}.  
This idea has been used a number of times before in the random matrix theory literature, see, for instance \cite{MR2294976,MR2784665,MR3403996} and references therein for a partial list. 
While there are a number of results similar to Theorem \ref{thm:replacement} in the random matrix theory literature, we could not find one that applies to the specific models we study in this paper.  

The rest of this appendix is devoted to the proof of Theorem \ref{thm:replacement}. 
We start by introducing the necessary notation. 
For any $n \times n$ real symmetric matrix $M_n$, we define the resolvent $R_{M_n}$ of $M_n$ by
\[ R_{M_n}(z) = (M_n - zI)^{-1} \]
for $z \in \mathbb{C}^+ = \{z \in \mathbb{C} : \Im(z) > 0\}$. In particular, we note the bound 
\[ \|R_{M_n}(z)\| \leq \frac{1}{\Im(z)} \]
which holds for any $z \in \mathbb{C}^+$. 
We also define the Stieltjes transform $s_{M_n}$ of $M_n$ as 
\[ s_{M_n}(z) = \frac{1}{n} \tr R_{M_n}(z), \qquad z \in \mathbb{C}^+. \]
Since $|s_{M_n}(z)| \leq \|R_{M_n}(z)\| \leq \Im(z)^{-1}$, $s_{M_n}$ is a bounded analytic function in any compact subset of $\mathbb{C}^+$. 
Given the matrix $M_n$, we let $M_n'$ be defined as
\[ M_n' = \frac{1}{\sqrt{K_n}}A_n \odot M_n + B_n. \]
In this way, the two matrices we will consider are 
\[ X_n' = \frac{1}{\sqrt{K_n}} A_n \odot X_n + B_n, \quad Y_n' = \frac{1}{\sqrt{K_n}} A_n \odot Y_n + B_n. \]

Let $A_n$, $B_n$, $X_n$, and $Y_n$ be as in the statement of Theorem \ref{thm:replacement}. 
Using standard results concerning the Stieltjes transform of probability measures (see, for example, \cite[Appendix B.2]{bai_silverstein_2010}), it suffices to show that, for any $z \in \mathbb{C}^+$, 
\[ \lim_{n \to \infty} |s_{X'_n}(z) - s_{Y'_n}(z)| = 0 \]
in probability. 
Let $1 > \eps > 0$.
We will show for any $z \in \mathbb{C}^+$ there exists a constant $C > 0$ (not depending on $\eps$) so that
\[ \lim_{n \to \infty} \mathbb{P} \left( |s_{X'_n}(z) - s_{Y'_n}(z)| > C \eps \right) = 0. \]
Since $\eps$ is arbitrary, this would complete the proof.  

We begin with a truncation argument. 
Define the matrix $\tilde{X}_n$ by
\[ (\tilde{X}_n)_{ij} = (X_n)_{ij} \chi_{\{ |(X_n)_{ij}| \leq \eps \sqrt{K_n} \}}; \]
$\tilde{Y}_n$ is defined similarly.  
Recall that
\[ \tilde{X}_n' = \frac{1}{\sqrt{K_n}} A_n \odot \tilde X_n + B_n, \quad \tilde{Y}_n' = \frac{1}{\sqrt{K_n}} A_n \odot \tilde Y_n + B_n. \]
Note that
\[ \frac{1}{n} \mathbb{E} \tr \left( (X_n' - \tilde{X}_n')^2 \right) \leq \frac{2}{nK_n} \sum_{1 \leq i \leq j \leq n} (A_n)_{ij} \mathbb{E} \left[ (X_n)_{ij}^2 \chi_{ \{ |(X_n)_{ij}| > \eps \sqrt{K_n} \} } \right] \longrightarrow 0 \]
by assumption \eqref{eq:lind}.
A similar bound holds for $\tilde{Y}_n'$. 
Thus, in view of Markov's inequality and Theorem \ref{thm:cor41}, it suffices to show, for any $z \in \mathbb{C}^+$ there exists a constant $C > 0$ (not depending on $\eps$) so that 
\begin{equation} \label{eq:stjshow}
   \lim_{n \to \infty} \mathbb{P} \left( |s_{\tilde{X}'_n}(z) - s_{\tilde{Y}'_n}(z)| > C \eps \right) = 0. 
\end{equation}
To show this, we will need the following concentration inequality.

\begin{lemma}[Concentration of the Stieltjes transform] \label{lem:concentration}
    Under the assumptions of Theorem \ref{thm:replacement}, for any $z \in \mathbb{C}^+$ , there exists $C, c > 0$ so that
    \begin{equation} \label{eq:mcdiarmid}
        \mathbb{P} \left( \left|s_{\tilde X_n'}(z) - \mathbb{E} [s_{\tilde X_n'}(z)] \right| \geq C \frac{t}{\sqrt{n}} \right) \leq C e^{-c t^2}  
    \end{equation}
    for any $t > 0$. 
    The same bound holds for $s_{\tilde Y_n'}$. 
\end{lemma}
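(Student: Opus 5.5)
We would prove Lemma~\ref{lem:concentration} by the standard martingale argument for Stieltjes transforms. The variables that matter are the independent rows/columns of the upper triangle. For $k=1,\dots,n$ let $x_k$ denote the collection of entries $\{(\tilde X_n)_{kj} : j \ge k\}$, which are independent across $k$. Then $s_{\tilde X_n'}(z)$ is a function $f(x_1,\dots,x_n)$ of independent random vectors. We write
\[
s_{\tilde X_n'}(z)-\mathbb{E}[s_{\tilde X_n'}(z)]=\sum_{k=1}^n (\mathbb{E}_k-\mathbb{E}_{k-1})\, s_{\tilde X_n'}(z),
\]
where $\mathbb{E}_k$ denotes conditional expectation given $x_1,\dots,x_k$. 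We then apply McDiarmid's bounded differences inequality, or equivalently Azuma--Hoeffding for this martingale difference sequence, taking real and imaginary parts separately.

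The key step is the bounded-differences estimate. Replace $x_k$ by an independent copy $x_k'$, and let $M$, $M'$ denote the resulting matrices $\tilde X_n'$ built from $x_k$ and $x_k'$ respectively. Since $B_n$ is deterministic and unchanged, $M-M'$ is supported on row $k$ and column $k$, so $\operatorname{rank}(M-M')\le 2$. The resolvent identity, or interlacing, gives
\[
|\tr R_{M}(z)-\tr R_{M'}(z)|\le \frac{2\cdot 2}{\Im z}
\]
(a rank-one perturbation changes the trace of the resolvent by at most $\pi/\Im z$, or more crudely $2/\Im z$ via the integral of the difference of counting functions). Hence the difference in $s$ is at most $c_0/(n\Im z)$, with $c_0$ absolute. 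This bound holds deterministically. In particular it needs no control of the entry sizes, and so is uniform in the truncation level, the sparsity profile, $K_n$, and $B_n$.

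McDiarmid then gives
\[
\mathbb{P}\big(|s-\mathbb{E}s|\ge u\big)\le 4\exp\!\big(-u^2 n (\Im z)^2/(2c_0^2)\big).
\]
Setting $u=Ct/\sqrt{n}$ yields \eqref{eq:mcdiarmid}, with constants depending only on $\Im z$. The same argument applies verbatim to $s_{\tilde Y_n'}$.

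The only delicate point is the rank-perturbation trace bound. We would justify it by the standard fact that if $\operatorname{rank}(M-M')\le r$, then the eigenvalue counting functions differ by at most $r$ uniformly, i.e.\ $\|F_M-F_{M'}\|_\infty\le r/n$. Integrating by parts against $x\mapsto (x-z)^{-1}$, whose total variation on $\mathbb{R}$ is bounded by $C/\Im z$, then gives the bound.
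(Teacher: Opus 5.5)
Your proposal is correct and follows the argument the paper relies on. The paper only cites McDiarmid's inequality and \cite[Section 2.4.3]{MR2906465}, where $s(z)$ is treated as a function of the independent upper-triangular rows. Resampling one row is a perturbation of rank at most $2$, so it changes $s(z)$ by $O(1/(n\Im z))$ deterministically, uniformly in the truncation level, $A_n$, $K_n$, and $B_n$. You supply exactly these details. One cosmetic point: the counting-function route gives $\pi r/\Im z$ for a rank-$r$ perturbation, so calling $2/\Im z$ ``cruder'' than $\pi/\Im z$ is backwards. This does not matter, since any absolute constant $c_0$ works.
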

\begin{proof}
    The bound in \eqref{eq:mcdiarmid} follows from McDiarmid’s inequality (see, for instance, \cite[Theorem 2.1.10]{MR2906465}).
    The argument is standard.  For example, one can follow the methods from \cite[Section 2.4.3]{MR2906465} with only minor modifications; we omit the details. 
\end{proof}

With Lemma \ref{lem:concentration} in hand, we return to the proof of Theorem \ref{thm:replacement}.
Indeed, we wish to establish \eqref{eq:stjshow}.
To this end, fix $z \in \mathbb{C}^+$.  By Lemma \ref{lem:concentration}, it suffices to show there exists a constant $C >0$ (not depending on $\eps$) so that
\begin{equation} \label{eq:stfinal}
    \limsup_{n \to \infty} |\mathbb{E}[s_{\tilde X_n'}(z)] - \mathbb{E}[s_{\tilde Y_n'}(z)] | \leq C \eps.
\end{equation}
We will do so by replacing each entry of $\tilde X_n$, one or two at a time, until we have constructed $\tilde Y_n$. 
At each step, we will show that changing a single entry of $\tilde X_n$ (or a pair of mirrored entries) only changes $\mathbb{E} [s_{\tilde X_n'}(z)]$ by a small amount.
    
We start by defining $E_n = \{(i, j) : 1 \leq i \leq j \leq n, (A_n)_{ij} = 1\}$ to be the upper-triangular entries of $A_n$ that are nonzero.
Fix $(a, b) \in E_n$. 
Let $Q_n$ be a random $n \times n$ real symmetric matrix, where each entry $(Q_n)_{ij}$ is either equal to $(\tilde X_n)_{ij}$ or $(\tilde Y_n)_{ij}$.
($Q_n$ is always real symmetric; so, if $i < j$ and $(Q_n)_{ij} = (\tilde X_n)_{ij}$, say, then it is assumed that $(Q_n)_{ji} = (\tilde X_n)_{ji}$.)
However, we will assume that the $(a,b)$-entry and $(b,a)$-entry are zero.  
(If $a = b$, then only one entry would be zero.)
We will write $Q_n(x)$ to denote the matrix $Q_n$ where the $(a,b)$- and $(b,a)$-entries have been replaced by $x$. 
Our goal is to establish the following lemma.

\begin{lemma} \label{lem:qtaylor}
    There exists a constant $C > 0$ so that 
    \begin{align*}
        &\left| \mathbb{E}[s_{Q_n'((\tilde{X}_n)_{ab})}(z)] - \mathbb{E}[s_{Q_n'((\tilde{Y}_n)_{ab})}(z)] \right| \\
        &\qquad\leq C \frac{(A_n)_{ab}}{\eps n K_n}   \left( \mathbb{E}[( X_n)^2_{ab}\chi_{ \{|(X_n)_{ab}| > \eps \sqrt{K_n} \} }] + \mathbb{E}[( Y_n)^2_{ab}\chi_{ \{|(Y_n)_{ab}| > \eps \sqrt{K_n} \} }] \right) \\
        &\qquad\qquad + C \frac{(A_n)_{ab}}{ n K_n} \eps \left( \mathbb{E}[|(X_n)_{ab}|^2] + \mathbb{E}[|(Y_n)_{ab}|^2] \right),
    \end{align*}
    if $a < b$,
    and 
    \[ \left| \mathbb{E}[s_{Q_n'((\tilde{X}_n)_{aa})}(z)] - \mathbb{E}[s_{Q_n'((\tilde{Y}_n)_{aa})}(z)] \right| \leq \frac{C \eps}{n}(A_n)_{aa} \]
    if $a = b$.
    The constant $C$ is independent $a,b$, and $\eps$. 
\end{lemma}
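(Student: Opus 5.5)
The plan is the standard Lindeberg single-entry swap: a third-order Taylor expansion of the Stieltjes transform in the swapped variable. Write $x$ for the value placed at positions $(a,b)$ and $(b,a)$. Set
\[
M(x) = Q_n'(x), \qquad R(x) = (M(x) - zI)^{-1}, \qquad f(x) = \tfrac1n \tr R(x),
\]
and let $E_{ab} = e_a e_b^T + e_b e_a^T$ (or $e_ae_a^T$ if $a=b$). Since $M(x) = M(0) + \frac{(A_n)_{ab}}{\sqrt{K_n}}\, x\, E_{ab}$, the resolvent identity gives the derivatives explicitly:
\[
f^{(k)}(x) = \frac{(-1)^k k!}{n}\left(\frac{(A_n)_{ab}}{\sqrt{K_n}}\right)^{k} \tr\big( (R E_{ab})^k R \big).
\]
Because $E_{ab}$ has rank at most $2$ and $\|R\|\le \Im(z)^{-1}$, we get $|f^{(k)}(x)| \le C_k\, (A_n)_{ab}\, n^{-1} K_n^{-k/2}\, \Im(z)^{-k-1}$, uniformly in $x$ and in the other entries.

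\textbf{Off-diagonal case $a<b$.} Expand $f(\xi) = f(0) + f'(0)\xi + \tfrac12 f''(0)\xi^2 + \mathrm{Rem}(\xi)$ for $\xi = (\tilde X_n)_{ab}$ and for $\xi = (\tilde Y_n)_{ab}$. The key observation is that $Q_n$ has a zero in the $(a,b)$ slot, so $f^{(k)}(0)$ depends only on the other entries and is independent of $\xi$. Taking expectations therefore factors the expansion, and the difference of the two expectations is bounded by three contributions.

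First, $|\mathbb{E}f'(0)|\cdot|\mathbb{E}\tilde X_{ab} - \mathbb{E}\tilde Y_{ab}|$. Since $X_n$ has mean zero, $|\mathbb{E}\tilde X_{ab}| = |\mathbb{E} X_{ab}\chi_{\{|X_{ab}|>\eps\sqrt{K_n}\}}| \le \mathbb{E}[X_{ab}^2\chi]/(\eps\sqrt{K_n})$, and likewise for $Y$. Multiplying by the $K_n^{-1/2}n^{-1}$ factor from $f'$ gives exactly the first term of the lemma, with its $1/(\eps n K_n)$ prefactor.

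Second, $|\mathbb{E}f''(0)|\cdot|\mathbb{E}\tilde X_{ab}^2 - \mathbb{E}\tilde Y_{ab}^2|$. The untruncated variances agree by moment matching, so the difference of second moments is bounded by $\mathbb{E}[X^2\chi]+\mathbb{E}[Y^2\chi]$ plus the squared truncated means. The factor $f''$ contributes $n^{-1}K_n^{-1}$, so this is also absorbed into the first term, using $\eps<1$.

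Third, the remainder, $|\mathrm{Rem}(\xi)| \le C|\xi|^3 n^{-1}K_n^{-3/2}$. Using $|\xi|\le \eps\sqrt{K_n}$ after truncation gives $|\xi|^3 \le \eps\sqrt{K_n}\,\xi^2$, so this is at most $C\eps\, \xi^2/(nK_n)$. Since $\mathbb{E}\xi^2 \le \mathbb{E}X_{ab}^2$, this yields the second term of the lemma.

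\textbf{Diagonal case $a=b$.} No moment matching is available. Instead use a first-order bound, $|f(\xi)-f(0)| \le \sup|f'|\,|\xi| \le C n^{-1}K_n^{-1/2}\cdot \eps\sqrt{K_n} = C\eps/n$, applied to both $\tilde X_{aa}$ and $\tilde Y_{aa}$.

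The expected main obstacle is bookkeeping rather than any deep step: one must handle the nonzero means created by truncation carefully, since they break exact moment matching, and keep the dependence of $C$ only on $\Im(z)$. The step I would watch most closely is the independence claim, namely that $f^{(k)}(0)$ is independent of the swapped entry. This holds because $Q_n$ is built from independent entries, with the $(a,b)$ entries set to zero.
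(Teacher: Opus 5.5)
Your proposal is correct and follows the paper's proof essentially step for step. It uses the same second-order Taylor expansion of $f(x)=s_{Q_n'(x)}(z)$ with third-order remainder, the same independence of $f(0),f'(0),f''(0)$ from the swapped entry, the same three truncation bounds on the first, second and third moments, and the same first-order bound in the diagonal case; your explicit resolvent formula for $f^{(k)}$ fills in the derivative bounds the paper omits, while the ``squared truncated means'' in your second-moment step are unnecessary, since $\mathbb{E}(X_n)_{ab}^2=\mathbb{E}(Y_n)_{ab}^2$ gives the bound directly (they are harmless anyway, as $(\mathbb{E}[(X_n)_{ab}\chi])^2\le\mathbb{E}[(X_n)_{ab}^2\chi]$).
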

We note that Lemma \ref{lem:qtaylor} also applies to the $(a,b)$-entries when $(A_n)_{ab} = 0$ as those terms are simply zero.
We delay the proof of Lemma \ref{lem:qtaylor} for now and first complete the proof of Theorem \ref{thm:replacement}.

Applying Lemma \ref{lem:qtaylor} and the triangle inequality, we obtain 
\begin{align*}
    &\left| \mathbb{E}[s_{\tilde X_n'}(z)] - \mathbb{E}[s_{\tilde Y_n'}(z)] \right| \\
    &\leq C \sum_{1 \leq a < b \leq n} \frac{(A_n)_{ab}}{\eps n K_n}   \left( \mathbb{E}[( X_n)^2_{ab}\chi_{ \{|(X_n)_{ab}| > \eps \sqrt{K_n} \} }] + \mathbb{E}[( Y_n)^2_{ab}\chi_{ \{|(Y_n)_{ab}| > \eps \sqrt{K_n} \} } ]\right) \\
        &\qquad\qquad + C \sum_{1 \leq a < b \leq n} \frac{(A_n)_{ab}}{ n K_n} \eps \left( \mathbb{E}[|(X_n)_{ab}|^2] + \mathbb{E}[|(Y_n)_{ab}|^2] \right) \\
        &\qquad\qquad +C \sum_{a=1}^n \frac{\eps}{n}(A_n)_{aa} \\
    &= S_n^{(1)} + S_n^{(2)} + S_n^{(3)},
\end{align*}
where $S_n^{(1)}$, $S_n^{(2)}$, $S_n^{(3)}$ are the first, second, and third sum, respectively.

The third sum is bounded trivially,
\[ S_n^{(3)} \leq C \eps \]
since the entries of $A_n$ are either zero or one.
Applying assumption \eqref{assump:sparsity}, we see that
\[ \limsup_{n \to \infty} S_n^{(2)} \leq C' \eps \]
for some constant $C' > 0$ (not depending on $\eps$). 
Finally, by assumption \eqref{eq:lind}, we note that
\[ \lim_{n \to \infty} S_n^{(1)} = 0. \]
Therefore, by combining these bounds, we obtain  \eqref{eq:stfinal}, and the proof of Theorem \ref{thm:replacement} is complete.

It remains to prove Lemma \ref{lem:qtaylor}.

\begin{proof}[Proof of Lemma \ref{lem:qtaylor}]
    Throughout the proof $C$ will represent a positive constant, which may change from one line to the next line, and which does not depend on $\eps, a, b$. 
    Let us begin with the case when $a < b$. 
    Define
    \[ f(x) = s_{Q_n'(x)}(z). \]
    That is, $f(x)$ is the Stieltjes transform of 
    \[ Q_n'(x) = \frac{1}{\sqrt{K_n}} A_n \odot Q_n(x) + B_n. \]
    Recall that $z \in \mathbb{C}^+$ has been fixed.
    We will write out the Taylor expansion
    \[ f(x) = f(0) + f'(0)x + f''(x) \frac{x^2}{2} + E_{ab}, \]
    where
    $|E_{ab}| \leq C (A_n)_{ab} M |x|^3$ and
    \[ M = \sup_{x \in \mathbb{R}} |f'''(x)|. \]
    Substituting $x = (\tilde X_n)_{ab}$ into this equation and taking expectation gives
    \[ \mathbb{E} [f( (\tilde X_n)_{ab})] = \mathbb{E} [f(0)] + \mathbb{E}[(\tilde X_n)_{ab}]\mathbb{E}[f'(0)] +  \frac{1}{2} \mathbb{E}[(\tilde X_n)_{ab}^2] \mathbb{E}[f''(0)] + \mathbb{E}[E_{ab}] \]
    since $f(0)$, $f'(0)$, and $f''(0)$ are all independent of $(X_n)_{ab}$.
    Applying this to $(\tilde Y_n)_{ab}$ as well and then subtracting the two equations yields
    \begin{align} \label{eq:subbnd}
        &\left| \mathbb{E} [f( (\tilde X_n)_{ab})] - \mathbb{E} [f( (\tilde Y_n)_{ab})] \right| \\
        &\qquad\qquad\leq |\mathbb{E}[f'(0)]| \left| \mathbb{E}[(\tilde X_n)_{ab} - (\tilde Y_n)_{ab}] \right| + \frac{1}{2} |\mathbb{E}[f''(0)]| \left| \mathbb{E}[(\tilde X_n)_{ab}^2 - (\tilde Y_n)_{ab}^2] \right| + \mathcal{E}_{ab}, \nonumber
    \end{align}
    where $\mathcal{E}_{ab} \leq C (A_n)_{ab} M \mathbb{E}[|(\tilde X_n)_{ab}^3| + |(\tilde Y_n)_{ab}^3|]$. 
    To compute the derivatives $f'$, $f''$, $f'''$, we apply the resolvent identity to see that
    \[ \frac{\partial (R_{M_n}(z))_{ij}}{\partial(M_n)_{kl}} = \begin{cases}
        - R_{M_n}(z)_{ik} R_{M_n}(z)_{lj} - R_{M_n}(z)_{il} R_{M_n}(z)_{kj}, & \text{ if } k \neq l, \\
        -R_{M_n}(z)_{ik}R_{M_n}(z)_{lj}, & \text{ if } k = l.
    \end{cases} \]
    From this one can derive the bounds 
    \[ \sup_{x \in \mathbb{R}}|f'(x)| \leq C \left( \frac{(A_n)_{ab}}{n\sqrt{K_n}}\right), \quad
    \sup_{x \in \mathbb{R}}|f''(x)| \leq C \left( \frac{(A_n)_{ab}}{n{K_n}}\right), \quad
    \sup_{x \in \mathbb{R}} |f'''(x)| \leq C \left( \frac{(A_n)_{ab}}{n{K_n}^{3/2}}\right). \]
    The factors of $\sqrt{K_n}$ appear due to the normalization we have chosen and the chain rule. 
    These bounds are fairly standard to derive; see, for instance, the replacement principle argument in \cite{MR3403996}.
    We omit the details.

    Returning to \eqref{eq:subbnd}, we substitute the bounds above and arrive at
    \begin{align*}
     &\left| \mathbb{E} [f( (\tilde X_n)_{ab})] - \mathbb{E} [f( (\tilde Y_n)_{ab})] \right| \\
     &\leq  C \frac{(A_n)_{ab}}{n\sqrt{K_n}} \left( \left| \mathbb{E}[(\tilde X_n)_{ab} - (\tilde Y_n)_{ab}] \right| + \frac{\left| \mathbb{E}[(\tilde X_n)_{ab}^2 - (\tilde Y_n)_{ab}^2] \right|}{\sqrt{K_n}} + \frac{\mathbb{E}[|(\tilde X_n)_{ab}^3| + |(\tilde Y_n)_{ab}^3|]}{K_n}\right).
    \end{align*}
    Since $( X_n)_{ab}$ and $( Y_n)_{ab}$ both have mean zero, 
    \begin{align*}
    \left| \mathbb{E}[(\tilde X_n)_{ab} - (\tilde Y_n)_{ab}] \right| &= \left| \mathbb{E}[( X_n)_{ab}\chi_{ \{|(X_n)_{ab}| > \eps \sqrt{K_n} \} } - ( Y_n)_{ab}\chi_{ \{|(Y_n)_{ab}| > \eps \sqrt{K_n} \} }] \right| \\
    &\leq \frac{1}{\eps \sqrt{K_n}} \left( \mathbb{E}[( X_n)^2_{ab}\chi_{ \{|(X_n)_{ab}| > \eps \sqrt{K_n} \} }] + \mathbb{E}[( Y_n)^2_{ab}\chi_{ \{|(Y_n)_{ab}| > \eps \sqrt{K_n} \} }] \right).
    \end{align*}
    Similarly, since $( X_n)_{ab}$ and $( Y_n)_{ab}$ both have the same mean and variance,
    \begin{align*}
        \left| \mathbb{E}[(\tilde X_n)_{ab}^2 - (\tilde Y_n)_{ab}^2] \right| &\leq \mathbb{E}[( X_n)^2_{ab}\chi_{ \{|(X_n)_{ab}| > \eps \sqrt{K_n} \} }] + \mathbb{E}[( Y_n)^2_{ab}\chi_{ \{|(Y_n)_{ab}| > \eps \sqrt{K_n} \} }].
    \end{align*}
    Lastly, we have
    \begin{align*}
        \mathbb{E}[|(\tilde X_n)_{ab}^3| + |(\tilde Y_n)_{ab}^3|] &\leq \eps \sqrt{K_n} \left( \mathbb{E}[|(X_n)_{ab}|^2] + \mathbb{E}[|(Y_n)_{ab}|^2] \right).
    \end{align*}

    Combining the bounds above, we arrive at
    \begin{align*}
        &\left| \mathbb{E} [f( (\tilde X_n)_{ab})] - \mathbb{E} [f( (\tilde Y_n)_{ab})] \right| \\
        &\leq C \frac{(A_n)_{ab}}{\eps n K_n}   \left( \mathbb{E}[( X_n)^2_{ab}\chi_{ \{|(X_n)_{ab}| > \eps \sqrt{K_n} \} }] + \mathbb{E}[( Y_n)^2_{ab}\chi_{ \{|(Y_n)_{ab}| > \eps \sqrt{K_n} \} }] \right) \\
        &\qquad + C \frac{(A_n)_{ab}}{ n K_n} \eps \left( \mathbb{E}[|(X_n)_{ab}|^2] + \mathbb{E}[|(Y_n)_{ab}|^2] \right),
    \end{align*}
    which completes the proof in the case $a < b$. 

    Assume now that $a = b$.
    In this case, we expand
    \[ f(x) = f(0) + E_{aa}, \]
    where $|E_{aa}| \leq C M|x|$ and we now take
    \[ M = \sup_{x \in \mathbb{R}} |f'(x)|. \]
    Applying the same derivative bounds as above, we conclude that
    \begin{align*}
        \left| \mathbb{E} [f( (\tilde X_n)_{aa})] - \mathbb{E} [f( (\tilde Y_n)_{aa})] \right| &\leq C\frac{(A_n)_{aa}}{n \sqrt{K_n}} \left( \mathbb{E}|(\tilde X_n)_{aa}| + \mathbb{E}|(\tilde Y_n)_{aa}| \right) \\
        &\leq C \eps \frac{(A_n)_{aa}}{n},
    \end{align*}
    and the proof is complete.
\end{proof}

\bibliographystyle{abbrv}
\bibliography{refs}

\end{document}